\numberwithin{equation}{section}
\newtheorem{notation}[theorem]{Notation}
\newtheorem{assumption}[theorem]{Assumption}
\def \y {{\eta}}
\def \s {{\sigma}}
\def \g {{\gamma}}
\def \a {{\alpha}}
\def \b {{\beta}}
\def \R {\mathbb{R}}
\def \p {\partial}
\def \t {\tau}
\newcommand{\<}{\langle}
\renewcommand{\>}{\rangle}
\renewcommand{\(}{\left(}
\renewcommand{\)}{\right)}
\newcommand\Rb{\mathbb{R}}
\newcommand\Ac{\mathscr{A}}
\newcommand\Acc{\bar{\mathscr{A}}}
\newcommand\ac{\bar{a}}
\newcommand\Bc{\mathscr{B}}
\newcommand\Gc{\mathscr{G}}
\newcommand\Lc{\mathscr{L}}
\newcommand\Nc{\mathscr{N}}
\newcommand\eps{\varepsilon}
\newcommand\e{\varepsilon}
\newcommand\sig{\sigma}
\newcommand\lam{\lambda}
\newcommand\del{\delta}
\newcommand\Cv{\mathbf{C}}
\newcommand\mv{\mathbf{m}}
\renewcommand\d{\delta}
\newcommand\ii{\mathtt{i}}
\newcommand\dd{d}
\newcommand\ee{\mathrm{e}}
\newcommand\BS{\mathrm{BS}}
\def \ub {u^{\text{\rm BS}}}
\def \tens {\Lambda}
\def \tensbis {\Theta}
\newcommand\FF{u^{\BS}}
\newcommand\GG{\s^{\BS}}
\newcommand\DomY{\R^{d-1}}
\newcommand\DomXY{\R\times\DomY}
\def \ce {{M}}
\def \cem {{M_{0}}}
\def \Gbs {{\Gamma_{0}}}
\def \Gb {\bar{\Gamma}}
\def \R  {{\mathbb {R}}}
\def \x {{\xi}}
\def \g {{\gamma}}
\def \e {{\varepsilon}}
\def \eps {{\varepsilon}}
\def \t {{\tau}}
\def \n {{\nu}}
\def \m {{\mu}}
\def \y {{\eta}}
\def \th {{\theta}}
\def \z {{\zeta}}
\def \p {{\partial}}
\def \a {{\alpha}}
\def \O {{\Omega}}
\def \d {{\delta}}
\def \TT {\mathbf{T}}
\def \k {{\kappa}}
\def \a {{\alpha}}
\def \b {{\beta}}
\def \d {{\delta}}
\def \G {\Ga}
\def \Ga {{\Gamma}}
\def \s {{\sigma}}
\def \R {{\mathbb {R}}}
\def \N {{\mathbb {N}}}
\def \x {{\xi}}
\def \e {{\varepsilon}}
\def \eps {{\varepsilon}}
\def \r {{\varrho}}
\def \t {{\tau}}
\def \t {{\tau}}
\def \n {{\nu}}
\def \m {{\mu}}
\def \y {{\eta}}
\def \th {{\theta}}
\def \z {{\zeta}}
\def \g {{\gamma}}
\def \O {{\Omega}}
\def \phi {{\varphi}}
\def \tilde {\widetilde}
\def\l {\lambda}
\def \F {\mathcal{F}}
\def \à {{\`a }}
\def \è {{\`e }}
\def \ò {{\`o }}
\def \ù {{\`u }}
\def \cy {{\y}}
\newcommand\Hb{\bar{H}}
\newcommand\Ho{H}
\newcommand\Cdt{\tilde{C}}
\newcommand\Cdb{\bar{C}}
\newcommand\xfixed{x_0}
\newcommand\yfixed{y_0}
\begin{document}

\title{The exact Taylor formula of the implied volatility}

\author{
Stefano Pagliarani
\thanks{DEAMS, Universit\`a di Trieste, Trieste, Italy.
\textbf{e-mail}: pagliarani@cmap.polytechnique.fr. Work supported by the Chair {\it Financial
Risks} of the {\it Risk Foundation}.} \and Andrea Pascucci
\thanks{Dipartimento di Matematica, Universit\`a di Bologna, Bologna, Italy.
\textbf{e-mail}: andrea.pascucci@unibo.it}
}

\date{This version: \today}

\maketitle

\begin{abstract}
In a model driven by a multi-dimensional local diffusion, we study the behavior of implied
volatility $\s$ and its derivatives with respect to log-strike $k$ and maturity $T$ near expiry
and at the money. We recover explicit limits of the derivatives ${\partial_T^q}
\partial_k^m \sigma$ for $(T,x-k)$ approaching the origin within the parabolic region $|x-k|\leq
\lam \sqrt{T}$, with $x$ denoting the spot log-price of the underlying asset and where $\l$ is a
positive and arbitrarily large constant. Such limits yield the exact Taylor formula for
implied volatility
within the parabola $|x-k|\leq \lam \sqrt{T}$. {In order to include important models of interest
in mathematical finance, e.g. Heston, CEV, SABR, the analysis is carried out under the weak
assumption that the infinitesimal generator of the diffusion is only locally elliptic.}
\end{abstract}

\noindent \textbf{Keywords}:  implied volatility, local-stochastic volatility, local diffusions,
Feller process

\noindent \textbf{MSC 2010 numbers}: 60J60, 60J70, 91G20

\noindent \textbf{JEL classification codes}: C02, C60

%
%

\section{Introduction}
\label{sec:intro}
\par

This paper 
{deviates} from the mainstream literature on asymptotic methods in finance; in fact, our main
result does not add another formula to the plethora of approximation formulas for the implied
volatility (IV) already available in the literature.
Rather, we prove an {\it exact result:} 
a rigorous derivation of the {exact Taylor formula of IV}, as a \emph{function of both strike and
maturity}, in a parabolic region close to {expiry} and {at-the-money} {(ATM)}.

This is done under general assumptions that allow to include popular models, such as the CEV and
the Heston models, as very particular cases: indeed, we consider a multivariate model driven by a
stochastic process that is a {\it local diffusion} in a sense that suitably generalizes the
classical notion of diffusion as given by \cite{StroockVaradhan} and
\cite{FriedmanSDE1,FriedmanSDE2}.

The literature on IV asymptotics is extensive and {exploits} a diverse range of mathematical
techniques. Focusing on short-time asymptotics, well-known results were obtained by
\cite{berestycki2002asymptotics}, \cite{berestycki-busca-florent} and \cite{Durrleman2010}.
Deferring precise definitions until the body of this paper, we denote by $\s(t,x;T,k)$ the IV
related to a Call option with log-strike $k$ and maturity $T$, where $x$ is the spot log-price of
the underlying asset at time $t$. \cite{berestycki-busca-florent} {uses PDEs techniques} to prove
the existence of the limits {$\lim\limits_{T\rightarrow t^+} \s(t,x;T,k)$} in a generic stochastic
volatility model and to characterize such limits in terms of Varadhan's geodesic distance (see
also to \cite{GavalasYortsos} for related results). More recently, {\cite{Durrleman2010} {gives
conditions} under which it is possible to recover the ATM-limits {$\lim\limits_{T\rightarrow t^+}{
\partial_T^q}
\partial_k^m \sigma(t,k;T,k)$} using a semi-martingale decomposition of implied volatilities;
although this approach performs also in non-Markovian settings, the validity of the conditions for
the existence of the limits is verified only under Markovian assumptions and employing
the results in \cite{berestycki-busca-florent}}.

While it is common practice to consider the IV as a function of maturity and strike $(T,k)$, the
aforementioned papers examine only the {\it vertical limits}, as $T\rightarrow t^+$, of
{$\sigma(t,x;T,k)$}. The aim of this paper is to give conditions for the existence and an explicit
representation of the limits of ${\partial_T^q}
\partial_k^m \sigma(t,x;T,k)$, {at any order $m,{q}$}, as $(T-t,x-k)$ approaches the origin within the parabolic region
$\mathcal{P}_{\l}:=\{|x-k|\leq \lam \sqrt{T-t}\}$; here $\l$ is an arbitrarily large positive
parameter. From a practical perspective, $\mathcal{P}_{\l}$ is the region of interest where
implied volatility data are typically observed in the market. As a by-product, we also provide a
rigorous and explicit derivation of the exact Taylor formula (see formula
\eqref{eq:Tay_form_intro2} below) for the implied volatility
$\sigma(t,x;\cdot,\cdot)$ in $\mathcal{P}_{\l}$, around $(T,k)=(t,x)$.

\begin{figure}[htb]
\centering
\includegraphics[width=1\textwidth,height=0.12\textheight]{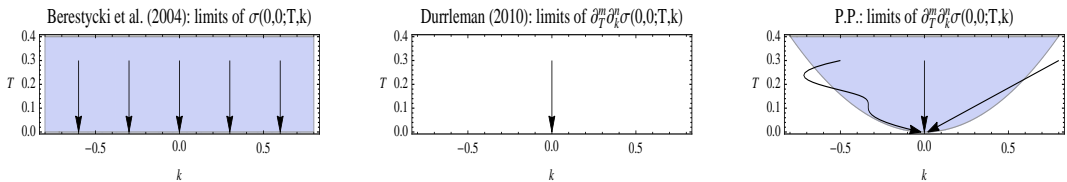}
\caption{Directions along which the limits are computed in \cite{berestycki-busca-florent}, in
\cite{Durrleman2010} and in this paper,  respectively.} \label{fig:limits}
\end{figure}

{The starting point is the analysis {of the transition density} first developed in a scalar
setting in \cite{pagliarani2011analytical} and later extended to asymptotic IV expansions in
multiple dimensions in \cite{LPP2},} {where the authors} derived a fully explicit approximation,
hereafter denoted by $\bar{\sigma}_N$, for the IV at any given order $N\in\N$. Our main result,
Theorem \ref{th:IV_error} {below}, gives a sharp error bound on {${\partial_T^q}\partial_k^m
(\sigma-\bar{\sigma}_N)$} and leads to the existence of the limits
\begin{equation}\label{eq:IV_limit}
 \lim_{(T,k)\to (t,x)\atop |x-k|\leq \lam \sqrt{T-t}} {\partial_T^q} \partial_k^m  \big(\sigma-\bar{\sigma}_N\big)(t,x;T,k)  = 0,\qquad 2{q}+m\leq N.
\end{equation}
{In the one-dimensional case and for derivatives of order less than or equal to two, similar
results were proved in \cite{BompisGobet2012} by using Malliavin calculus techniques. Our results
are proved under mild conditions on the driving stochastic process, which is assumed to be a
Feller process and an inhomogeneous local diffusion. Loosely speaking, we assume that the
infinitesimal generator of the diffusion is only \emph{locally elliptic} (i.e. elliptic on a
certain domain $D\subseteq \R^d$) and its coefficients satisfy suitable regularity conditions;
note that no ellipticity condition is imposed on the complementary set $\R^d\setminus D$. Results
under such general hypotheses appear to be novel compared to the existing literature. In
particular, our analysis includes processes with killing and/or degenerate processes: {\it our
assumptions do not even imply that the law of the underlying process has a density} and therefore
our results apply to many degenerate cases of interest, such as the well-known CEV, Heston and
SABR models, among others.}

Formula \eqref{eq:IV_limit} implies that the limits of the derivatives ${\partial_T^q}
\partial_k^m \sigma$ exist if and only if the limits of ${\partial_T^q} \partial_k^m
\bar{\sigma}_N$ do exist, and in that case we have
\begin{equation}\label{eq:IV_limit_bis2}
 \lim_{(T,k)\to (t,x)\atop |x-k|\leq \lam \sqrt{T-t}} {\partial_T^q} \partial_k^m  \sigma(t,x;T,k)  =  \lim_{(T,k)\to (t,x)\atop |x-k|\leq \lam \sqrt{T-t}} {\partial_T^q} \partial_k^m
 \bar{\sigma}_N(t,x;T,k).
\end{equation}
Note that, in general, the limits in \eqref{eq:IV_limit_bis2} do not exist: a simple example is
given in \cite{roper2009relationship}, Section 6, {who} exhibit a log-normal model with
oscillating time-dependent volatility. In that case the results by
\cite{berestycki2002asymptotics}, \cite{berestycki-busca-florent} and \cite{Durrleman2010} do not
apply, while the approximation $\bar{\sigma}_N$ in \cite{LPP4} turns out to be exact at order
$N=0$. More generally, we shall provide simple and explicit conditions ensuring the existence of
the limits of ${\partial_T^q} \partial_k^m \bar{\sigma}_N$, and consequently {the existence of
those of} ${\partial_T^q} \partial_k^m\s$ in \eqref{eq:IV_limit_bis2}. {A particular case is when
the underlying diffusion is time-homogeneous: in that case, $\bar{\sigma}_N$ is polynomial in time
and thus smooth up to $T=t$.}

Denoting by ${\partial_T^q} \partial_k^m \bar{\sigma}_N(t,x)$ the limits in
\eqref{eq:IV_limit_bis2}, {whose explicit expression is known at any order,} we get the following
{exact \emph{parabolic} Taylor formula for $\s$}:
\begin{equation}\label{eq:Tay_form_intro2}
 \s(t,x;T,k) = \sum_{2{q}+m\leq N} \frac{{\partial_T^q} \partial_k^m  {\bar{\sigma}_{N}(t,x)}}{{q}!m!}(T-t)^{q} (k-x)^m  + \text{\rm o}\left((T-t)^{\frac{N}{2}}+|k-x|^N\right),
\end{equation}
as $(T,k)\to(t,x)$ in $\mathcal{P}_{\l}$.
{Here, the meaning of the adjective \emph{parabolic} is twofold. On the one hand it refers to the
parabolic domain $\mathcal{P}_{\l}$ on which the Taylor formula is proved; on the other hand, it
refers to nature of the reminder, which is expressed in terms of  the homogeneous norm typically
used to describe the geometry induced by a parabolic differential operator.} Note that this
formula describes the behavior of
$\s$
in a joint regime of small log-moneyness and/or small maturity. This result appears to be novel
compared to the existing literature and complementary to \cite{lee2011asymptotics},
\cite{MijatovicTankov} and \cite{CaravennaCorbetta}. In \cite{lee2011asymptotics} the asymptotic
behavior of $\s$ in joint regime of extreme strikes and short/long time-to-maturity is studied; \cite{MijatovicTankov} studied,  in an exponential L\'evy model, the small-time asymptotic behavior of
$\s$ along relevant curves lying outside the parabolic region $\mathcal{P}_{\l}$ for any $\lam>0$; eventually, in
a very general setting, \cite{CaravennaCorbetta} studied the asymptotics of $\s$ for different
regimes of log-strikes and maturities, including the region $\mathcal{P}_{\l}$ where their result
coincides with ours at order zero.

A part from the mere interest of having at hand a Taylor formula like \eqref{eq:Tay_form_intro2},
additional advantages of having two-dimensional limits, as opposed to vertical ones, might come
from applications such as the asymptotic study of the IV generated by VIX options (see
\cite{barletta-pagliarani-nicolato}). In this case, the underlying value, given by the price of
the future-VIX, is not fixed but varies in time, meaning that the log-moneyness of an ATM VIX-Call
is not constantly zero, but approaches zero for small time-to-maturities along a curve which is
not a straight line.

The proof of our result proceeds in several steps. {We first introduce a notion of local diffusion
(Assumption \ref{assum1i}): we study its basic properties and the existence of a local transition
density. 
We provide a double characterization of the local density in terms of the forward and the backward
Kolmogorov equations (Theorem \ref{la1}): the forward representation follows from H\"ormander's
theorem and is coherent with the classical results by \cite{kusuoka-stroock}. On the other hand,
the backward representation appears to be novel at this level of generality. Indeed, its proof is
more delicate and requires the use the Feller property combined with the classical pointwise
estimates by \cite{moser71} for weak solutions of parabolic PDEs.} Then we derive sharp asymptotic
estimates for the derivatives ${\partial^q_T}\partial^m_k u(t,x;T,k)$, with $u$ representing the
pricing function of a Call option with maturity $T$ and log-strike $k$. This will be done first in
a uniformly parabolic framework and then will be extended to a \emph{locally} parabolic setting to
include the majority of the models used in mathematical finance. The second step is particularly
interesting due to the very loose assumptions imposed on the generator $\Ac_{t}$ of the underlying
diffusion. The main idea is to prolong $\Ac_{t}$ with an operator $\tilde{\Ac}_{t}$ which is
globally parabolic and then to prove that locally in space 
the difference between the fundamental solution of $\tilde{\Ac}_{t}$ and the local density of the
underlying process decays exponentially as the time-to-maturity approaches zero. This last step
requires an articulated use of some techniques first introduced by \cite{Safonov1998}. Eventually,
the estimates on the derivatives ${\p_T^q}\partial_k^m u$ are combined with some sharp estimates
on the inverse of the B\&S pricing function and on its sensitivities to obtain the main results,
Theorem \ref{th:IV_error} and the Taylor formula \eqref{eq:Tay_form_intro2}.

\smallskip The paper is organized as follows.
In Section \ref{sec:model} we describe the general setting and show some illustrative examples of
popular models satisfying our standing assumptions. In Section \ref{asec3} we briefly recall the
{asymptotic expansion procedure} proposed by \cite{LPP2}. In Section \ref{sec:error} we derive
error estimates for prices and sensitivities, first under the strong assumption of uniform
parabolicity (Subsection \ref{sec:price.error}) and then in the general case (Subsection
\ref{sec:price.error.local}). In Section \ref{sec:error.impvol} we prove our main result (Theorem
\ref{th:IV_error}) on the error estimates of the IV and its derivatives, and the consequent
parabolic Taylor formula. 
Finally, the Appendix contains the proof of Theorem \ref{th:error_estimates_taylor} and other
auxiliary results, namely: some short-time/small-volatility asymptotic estimates for the
Black-Scholes sensitivities (Appendix \ref{app:BS_greeks}), an explicit representation formula for
the terms appearing in the proxy $\bar{\s}_N$ (Appendix \ref{app:un_represent_theorem}), and a
multi-variate version of the Fa\`a di Bruno's formula (Appendix \ref{append:faa_bell}).

{\bf Acknowledgments.} The authors are grateful to Enrico Priola, Jian Wang and an anonymous
referee for their valuable comments and suggestions to improve the quality of the paper.

\section{{Local diffusions and local transition densities}
}\label{sec:model} In this section we describe the general setting and state the standing
assumptions under which the main results of the paper are carried out. We also show some examples
and prove some conditions under which such assumptions are satisfied. Generally we adopt
definitions and notations from \cite{FriedmanSDE1,FriedmanSDE2}.

We fix $T_{0}>0$ and consider a continuous $\R^d$-valued
Markov process $Z=(Z_{t})_{t\in[0,T_{0}]}$ with transition probability function $\bar{p}=\bar{p}(t,z;T,d\z)$,
defined on the space $\left(\O,\F,(\F_{T}^{t})_{0\le t\le T\le T_{0}},(P_{t,z})_{0\le t\le
T_{0}}\right)$. For any bounded Borel measurable function $\phi$, we denote by
\begin{equation}\label{ae67}
   E_{t,z}\left[\phi(Z_{T})\right]:=(\TT_{t,T}\phi)(z):=\int_{\R^{d}}\bar{p}(t,z;T,d\z)\phi(\z),\qquad 0\le t<T\le
   T_{0},\ z\in \R^{d},
\end{equation}
the $P_{t,z}$-expectation and the semigroup associated with the transition probability function
$\bar{p}$, respectively (cf. Chapter 2.1 in \cite{FriedmanSDE1}).

We assume that $Z=(S,Y)$ where $S$ is a non-negative martingale\footnote{We assume that $S$
is a martingale in order to ensure that the financial model is well posed: however this assumption
will not be used in the proof of our main results.} and $Y$ takes values in $\R^{d-1}$: here $S$ represents the risk-neutral price of a financial asset and $Y$ models a
number of stochastic factors in the market. For simplicity, we assume zero interest rates and no
dividends\footnote{The case of deterministic interest rates and/or dividends can be easily
included by performing the analysis on the forward prices.}.

Throughout the paper we assume the existence of a
domain\footnote{Connected and open set.} $D\subseteq \R_{>0}\times \DomY$ on which the following
three standing assumptions hold. We would like to emphasize that in the following
assumptions, we impose only {\it local conditions,} satisfied by all the most popular financial
models.
\begin{assumption}\label{assum1i}
The process $Z$ is a \emph{local diffusion on $D$}, meaning that for any
  $t\in[0,T_{0}[$, $\d>0$, $1\le i,j\le d$ and $H$, compact subset of $D$, there exist
  the limits
\begin{align}\label{ae51b}
  &\lim_{h\rightarrow 0^+}\!\!\!\!\int\limits_{\{|z-\z|>\d\}\cap H}\frac{\bar{p}(t,z;t+h,d\z)}{h}=\lim_{h\rightarrow 0^+}\!\!\!\!
  \int\limits_{\{|z-\z|>\d\}\cap H}\frac{\bar{p}(t-h,z;t,d\z)}{h}=0,
\intertext{uniformly w.r.t. $z\in \R_{\ge 0}\times\DomY$, and the limits}
   \label{ae51}
  &\lim_{h\rightarrow 0^+}\!\!\!\!\int\limits_{|z-\z|>\d}\frac{\bar{p}(t,z;t+h,d\z)}{h}=\lim_{h\rightarrow 0^+}\!\!\!\!\int\limits_{|z-\z|>\d}\frac{\bar{p}(t-h,z;t,d\z)}{h}=0,\\  \label{ae51c}
  &\lim_{h\rightarrow 0^+}\!\!\!\!\int\limits_{|z-\z|<\d}\!\!\!\!(\z_{i}-z_{i})\frac{\bar{p}(t,z;t+h,d\z)}{h}=\lim_{h\rightarrow
  0^+}\!\!\!\!\int\limits_{|z-\z|<\d}\!\!\!\!(\z_{i}-z_{i})\frac{\bar{p}(t-h,z;t,d\z)}{h}=:\ac_{i}(t,z),\\ \label{ae51d}
  &\lim_{h\rightarrow 0^+}\!\!\!\!\int\limits_{|z-\z|<\d}\!\!\!\!(\z_{i}-z_{i})(\z_{j}-z_{j})\frac{\bar{p}(t,z;t+h,d\z)}{h}
  =\lim_{h\rightarrow
  0^+}\!\!\!\!\int\limits_{{|z-\z|<\d}}\!\!\!\!(\z_{i}-z_{i})(\z_{j}-z_{j})\frac{\bar{p}(t-h,z;t,d\z)}{h}=:\ac_{ij}(t,z),\qquad
\end{align}
uniformly w.r.t. $z\in H$.
\end{assumption}

The following lemma, whose proof is deferred to
Subsection \ref{subsec:proof_theorem_density}, collects some useful
consequences of Assumption \ref{assum1i}.
\begin{lemma}\label{la5}
Under Assumption \ref{assum1i},
for any {$\phi \in C_{0}([0,T_{0}]\times D)$
} and $f\in C_{0}^{2}\left([0,T_{0}]\times D\right)$
we have
\begin{align}
 \label{ae53}
 &\lim_{T-t\rightarrow 0^+} \left\| {\TT_{t,T}\phi(T,\cdot)-\phi(t,\cdot)}\right\|_{L^{\infty}(\R_{\ge 0}\times
 \DomY)}=0,\\
\label{ae71}
 &\lim_{T-t\rightarrow 0^+}\left\|\frac{\TT_{t,T}f(T,\cdot)-f(t,\cdot)}{T-t}-\left(\p_{t}+\Acc_{t}\right)
 f(t,\cdot)\right\|_{L^{\infty}(\R_{\ge 0}\times \DomY)}=0,
\end{align}
where
\begin{align}\label{ae75}
 \Acc_{t}:= \frac{1}{2}\sum_{i,j=1}^{d}\ac_{ij}(t,z)\p_{z_{i}z_{j}}+\sum_{i=1}^{d}\ac_{i}(t,z)\p_{z_{i}}\qquad t\in [0,T_{0}[,\ z\in D.
\end{align}
Moreover, for any $0\le t< T< T_{0}$ and $z\in \R_{\ge 0}\times \DomY$, we have
\begin{equation}\label{ae62}
 \frac{d}{dT}\left(\TT_{t,T}f(T,\cdot)\right)(z)=\TT_{t,T}\left(\left(\p_{T}+\Acc_{T}
 \right)f(T,\cdot)\right)(z).
\end{equation}
\end{lemma}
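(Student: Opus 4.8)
The plan is to deduce all three assertions from Assumption \ref{assum1i} by a truncation-and-Taylor argument, separating the ``near-diagonal'' contribution (where a second-order Taylor expansion applies) from the ``far-from-diagonal'' remainder (which is killed by the integrability limits \eqref{ae51b}--\eqref{ae51}).

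First, for \eqref{ae53}: fix $\phi\in C_0([0,T_0]\times D)$. Since $\phi$ is uniformly continuous and compactly supported in the spatial variable (say $\operatorname{supp}\phi(t,\cdot)\subseteq H$ for a fixed compact $H\subset D$, uniformly in $t$), split
\begin{align*}
 \TT_{t,T}\phi(T,\cdot)(z)-\phi(t,z)
 &= \int_{\{|z-\z|\le\d\}}\bigl(\phi(T,\z)-\phi(t,z)\bigr)\bar p(t,z;T,d\z)\\
 &\quad + \int_{\{|z-\z|>\d\}}\phi(T,\z)\,\bar p(t,z;T,d\z)
   - \phi(t,z)\,\bar p\bigl(t,z;T,\{|z-\z|>\d\}\bigr).
\end{align*}
On $\{|z-\z|\le\d\}$ the integrand is bounded by the modulus of continuity of $\phi$ (in both arguments) which is made small by choosing $\d$ and $T-t$ small; the total mass there is $\le 1$. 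The two far-field terms are each controlled by $\|\phi\|_\infty\,\bar p(t,z;T,\{|z-\z|>\d\})$, and this quantity tends to $0$ uniformly in $z$ as $T-t\to0^+$: indeed, integrating \eqref{ae51b} (with $H$ enlarged to contain $\operatorname{supp}\phi$) over the time interval, or more directly noting that $\frac1h\bar p(t,z;t+h,\{|z-\z|>\d\})\to0$ uniformly, gives $\bar p(t,z;T,\{|z-\z|>\d\})=\text{o}(T-t)$ uniformly. This yields \eqref{ae53}. A subtle point here is the direction of the limit and the two-sided nature of Assumption \ref{assum1i}: one must use the forward version $\bar p(t,z;t+h,\cdot)$ with $t$ fixed and $h=T-t\to0^+$, which is exactly one half of \eqref{ae51b}.

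Next, for \eqref{ae71}: fix $f\in C_0^2([0,T_0]\times D)$, with spatial support in a fixed compact $H\subset D$. Write $f(T,\z)-f(t,z) = \bigl(f(T,z)-f(t,z)\bigr) + \bigl(f(T,\z)-f(T,z)\bigr)$. Integrating against $\bar p(t,z;T,d\z)$, the first bracket contributes $f(T,z)-f(t,z) = (T-t)\partial_t f(t,z)+\text{o}(T-t)$ uniformly (since $f\in C^1$ in time with compact support, using also that $\bar p(t,z;T,\cdot)$ is a probability measure). For the second bracket, do a second-order Taylor expansion in the spatial variable on $\{|z-\z|\le\d\}$:
\[
 f(T,\z)-f(T,z) = \sum_i \partial_{z_i}f(T,z)(\z_i-z_i) + \tfrac12\sum_{i,j}\partial_{z_iz_j}f(T,z)(\z_i-z_i)(\z_j-z_j) + R(T,z,\z),
\]
with $|R(T,z,\z)|\le \omega(\d)\,|\z-z|^2$ where $\omega$ is the modulus of continuity of $D^2 f$. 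Dividing by $T-t$ and using \eqref{ae51c}, \eqref{ae51d} (forward versions, $h=T-t$), the linear and quadratic terms converge uniformly to $\sum_i \ac_i(t,z)\partial_{z_i}f(t,z)$ and $\tfrac12\sum_{i,j}\ac_{ij}(t,z)\partial_{z_iz_j}f(t,z)$ respectively — here one also replaces $f(T,z)$-derivatives by $f(t,z)$-derivatives at a cost of $\text{o}(1)$ by continuity, and absorbs the $\d$-dependent spatial shift in $\ac_i,\ac_{ij}$ (which are limits independent of the choice of $\d$). The remainder term is $\le \omega(\d)\cdot\frac1{T-t}\int_{|z-\z|\le\d}|\z-z|^2\bar p(t,z;T,d\z)$; the integral is bounded, for $T-t$ small, by $\operatorname{const}\cdot(\operatorname{tr}\bar a(t,z)+1)$ via \eqref{ae51d} applied with, say, $i=j$ and summing, so letting first $T-t\to0$ and then $\d\to0$ kills it. Finally the far-field part $\frac1{T-t}\int_{|z-\z|>\d}(f(T,\z)-f(T,z))\bar p(t,z;T,d\z)$ is $\le \frac{2\|f\|_\infty}{T-t}\bar p(t,z;T,\{|z-\z|>\d\})\to0$ uniformly by \eqref{ae51} (or \eqref{ae51b}). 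Collecting everything gives \eqref{ae71}.

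The main obstacle is making all the convergences genuinely uniform in $z$ over $\R_{\ge0}\times\DomY$ (not merely over the compact $H$): off a neighbourhood of $\operatorname{supp} f$ the quantity $\TT_{t,T}f(T,\cdot)(z)$ need not vanish because the process can jump into $\operatorname{supp} f$, so one must argue that the contribution is $\text{o}(T-t)$ uniformly — this is precisely what the stronger uniform-over-$\R_{\ge0}\times\DomY$ half of \eqref{ae51b} is designed to give: for $z$ outside a fixed neighbourhood of $\operatorname{supp}f$ one has $\operatorname{dist}(z,\operatorname{supp}f)\ge\d>0$, so the whole integral is $\le\|f\|_\infty\,\bar p(t,z;T,\{|z-\z|>\d\})\cap H) = \text{o}(T-t)$ uniformly. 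Care is needed that the compact $H$ in Assumption \ref{assum1i} is chosen large enough to contain a neighbourhood of $\operatorname{supp}f$. Lastly, \eqref{ae62} follows from \eqref{ae71} by the semigroup property: for fixed $t$, $(\TT_{t,T}f(T,\cdot))(z)$ is differentiable in $T$ with
\[
 \frac{d}{dT}\TT_{t,T}f(T,\cdot) = \lim_{h\to0^+}\frac{\TT_{t,T+h}f(T+h,\cdot)-\TT_{t,T}f(T,\cdot)}{h}
 = \TT_{t,T}\left(\lim_{h\to0^+}\frac{\TT_{T,T+h}f(T+h,\cdot)-f(T,\cdot)}{h}\right),
\]
where pulling the limit inside $\TT_{t,T}$ (a positivity-preserving contraction on $L^\infty$) is justified by the \emph{uniform} convergence in \eqref{ae71}, and the inner limit equals $(\partial_T+\Acc_T)f(T,\cdot)$ by \eqref{ae71} itself; one checks the left derivative similarly using \eqref{ae71} with the backward half of Assumption \ref{assum1i}, and continuity of $T\mapsto\TT_{t,T}((\partial_T+\Acc_T)f(T,\cdot))$ (which follows from \eqref{ae53} applied to the continuous compactly supported function $(\partial_T+\Acc_T)f$) gives that the two one-sided derivatives agree and the derivative is continuous.
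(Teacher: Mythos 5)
Your proposal is correct and follows essentially the same near-diagonal/far-field decomposition and Taylor-expansion argument as the paper's proof, relying in the same way on the compact support of $\phi$ and $f$ together with the limits \eqref{ae51b}--\eqref{ae51d} and the semigroup property for \eqref{ae62}. The only slip is the mid-argument claim that $\bar p(t,z;T,\{|z-\zeta|>\delta\})=\text{o}(T-t)$ uniformly over \emph{all} $z\in\R_{\ge 0}\times\DomY$ — Assumption \ref{assum1i} only gives this after intersecting with a compact (via \eqref{ae51b}), or for $z$ in a compact (via \eqref{ae51}) — but this is precisely the point you identify and repair in your closing paragraph by using that $\phi(t,z)$ (resp.\ $f(t,z)$) vanishes off $H$, which is also how the paper handles its term $I_{t,T,3}$.
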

Many financial models are defined in terms of (stopped) solutions of stochastic differential
equations. We refer to Section 2.2 in \cite{FriedmanSDE1} for the definition and basic results
about $t$-stopping times with respect to a given Markov process. The following result shows that
stopped solutions of SDEs satisfy Assumption \ref{assum1i}.
\begin{lemma}\label{la6}
Let $\left(Z_{t}\right)_{t\in[0,T_{0}]}$ be a continuous Markov process defined as $Z_{t}=\hat{Z}_{t\wedge \t}$, where:
\begin{itemize}
  \item[i)] $\hat{Z}$ is a solution of the SDE
\begin{align}\label{eq:sde}
 d\hat{Z}_{t}=\m(t,\hat{Z}_{t})dt+\s(t,\hat{Z}_{t})dW_{t}
 \end{align}
where $W$ is a multi-dimensional Brownian motion and the coefficients of the SDE are continuous and bounded on $[0,T_0]\times D$, with $D$ a domain of $\R^d$;
  \item[ii)] $\t$ is the first exit time of $\hat{Z}$ from a domain $D'\subseteq \R_{\geq 0}\times \R^{d-1}$ containing $D$.
\end{itemize}
Then $Z$ is a local diffusion on $D$ in the sense of Assumption \ref{assum1i}, with
\begin{align}\label{ae76}
 \ac_{i}=\m_i,\qquad \ac_{ij}=\big(\s\s^{\ast}\big)_{ij},\qquad 1\leq i,j \leq d.
\end{align}
\end{lemma}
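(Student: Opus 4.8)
The plan is to verify each of the five limits \eqref{ae51b}--\eqref{ae51d} in Assumption \ref{assum1i} by localizing the stopped process $Z_t = \hat Z_{t\wedge\t}$ to a compact neighborhood of the starting point and reducing to standard estimates for solutions of the SDE \eqref{eq:sde}. First I would fix $t\in[0,T_0[$, a starting point $z$, and a small radius $\d>0$, and observe that on the event $\{|Z_{t+h}-z|>\d\}$ (or the analogous set intersected with a compact $H\subset D$) the process must have traveled a distance $\d$ over a time interval of length $h$; since $D'\supseteq D$, for $z$ in any compact subset of $D$ and $h$ small enough the stopping has not yet occurred unless $\hat Z$ itself has already moved far, so up to an event of probability $\text{o}(h)$ we may replace $Z$ by $\hat Z$. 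Thus the key analytic input is the classical tail bound $P_{t,z}\big(\sup_{s\le t+h}|\hat Z_s - z|>\d\big) = \text{o}(h)$ as $h\to 0^+$, uniform in $z$ ranging over a compact set, which follows from boundedness of $\m$ and $\s$ on $[0,T_0]\times D'$ together with the Burkholder--Davis--Gundy and exponential martingale (Bernstein-type) inequalities applied to the stochastic integral $\int_t^{t+h}\s(s,\hat Z_s)dW_s$ and the trivial $O(h)$ bound on the drift part. This immediately gives \eqref{ae51b} and \eqref{ae51}, and the uniformity in $z$ follows because the constants in BDG/Bernstein depend only on $\sup|\m|$ and $\sup|\s|$.

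For the first- and second-moment limits \eqref{ae51c}--\eqref{ae51d}, I would start from the \Ito\ representation $\hat Z_{t+h}^i - z_i = \int_t^{t+h}\m_i(s,\hat Z_s)ds + \int_t^{t+h}(\s\,dW_s)^i$ and its product form, take $P_{t,z}$-expectations (the stochastic integrals being true martingales on the bounded coefficient region, after a routine localization that is harmless since we are on a compact set of starting points), and divide by $h$. By continuity of $\m$ and $\s$ at $(t,z)$ and dominated convergence one gets $\frac1h\Eb_{t,z}\big[\hat Z^i_{t+h}-z_i\big]\to \m_i(t,z)$ and $\frac1h\Eb_{t,z}\big[(\hat Z^i_{t+h}-z_i)(\hat Z^j_{t+h}-z_j)\big]\to (\s\s^*)_{ij}(t,z)$, the cross terms between drift and martingale and the drift-drift term being $O(h)$. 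The remaining point is to pass from the full moments of $\hat Z_{t+h}-z$ to the truncated moments $\int_{|z-\z|<\d}(\z_i-z_i)(\cdots)\,\bar p(t,z;t+h,d\z)$: the difference is controlled by $\Eb_{t,z}\big[|\hat Z_{t+h}-z|^2\,\mathbf 1_{\{|\hat Z_{t+h}-z|\ge\d\}}\big]$, which by Cauchy--Schwarz is bounded by $\big(\Eb_{t,z}|\hat Z_{t+h}-z|^4\big)^{1/2} P_{t,z}(|\hat Z_{t+h}-z|\ge\d)^{1/2} = O(h^{1/2})\cdot\text{o}(h^{1/2})$, hence $\text{o}(h)$; replacing $\hat Z$ by the stopped process $Z$ costs only the $\text{o}(h)$ already estimated. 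The limits from the left (the $\bar p(t-h,z;t,d\z)$ expressions) are identical by time-translation since the SDE is defined on all of $[0,T_0]$. Finally \eqref{ae76} is read off directly from the limits just computed.

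The main obstacle I expect is bookkeeping the uniformity in $z$ over compact subsets of $D$ simultaneously with the stopping at $\t$: one must choose the compact set $H$, then a slightly larger compact set $H'$ with $H\subset H'\subset D'$, and argue that for $z\in H$ and $h$ small the process $\hat Z$ stays in $H'$ with probability $1-\text{o}(h)$, so that the stopping has no effect on the leading-order terms; the $\text{o}(h)$ bounds above then apply with constants depending only on $\sup_{[0,T_0]\times D'}(|\m|+|\s|)$ and on $\d$ and on $\mathrm{dist}(H,\partial H')$. Once this localization is set up cleanly, everything else is a routine application of BDG, Bernstein's exponential inequality, and continuity of the coefficients.
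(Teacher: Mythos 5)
Your plan is a legitimate alternative to the paper's argument, but it has one genuine gap that stems from not noticing what the hypothesis actually grants you: the coefficients $\m,\s$ are assumed continuous and bounded only on $[0,T_0]\times D$, not on the larger domain $D'$ from which the exit time $\t$ is taken. Your BDG/Bernstein estimates on $\hat Z$ and your direct computation of $\tfrac1h\Eb_{t,z}[\hat Z^i_{t+h}-z_i]$ all require a uniform bound on $|\m|+|\s|$ along the trajectory of $\hat Z$ up to time $t+h$; you explicitly invoke ``constants depending only on $\sup_{[0,T_0]\times D'}(|\m|+|\s|)$'', which is not hypothesized, and your suggested localization compact $H'$ is taken inside $D'$ rather than inside $D$, so it does not repair this. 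The paper sidesteps the issue entirely by never expanding $\hat Z^i_{t+h}-z_i$ itself: it applies It\^o to a cut-off $\phi_z\in C_0^\infty(D)$ (equal to $\z_i-z_i$ near $z$, or equal to a bump for the tail estimate). Since $\phi_z$ is supported in $D$, the integrands $\Acc_s\phi_z(\hat Z_s)$ and $\nabla\phi_z(\hat Z_s)\s(s,\hat Z_s)$ vanish whenever $\hat Z_s\notin D$, so the relevant $L^\infty$ bounds involve only $\|\m\|_{L^\infty(D)},\|\s\|_{L^\infty(D)}$ and the derivatives of $\phi_z$, exactly what the hypothesis provides. Your approach can be repaired by first stopping $\hat Z$ at the exit time of a compact $H'$ with $H\subset H'\subset\subset D$ (not merely $\subset D'$) before invoking BDG/Bernstein and the moment computations, but as written the needed bound is not available.

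Two further, more minor, deviations worth noting. First, for \eqref{ae51c} the paper does not pass directly from the full moment to the truncated moment via Cauchy--Schwarz; instead, after the It\^o step it rewrites the remainder as $\int_0^1\big(\TT_{t,t+\rho(T-t)}\Psi_z(t+\rho(T-t),\cdot)(z)-\Psi_z(t,z)\big)d\rho$ with $\Psi_z:=\Acc_{\cdot}\phi_z\in C_0([0,T_0]\times D)$, then appeals to \eqref{ae53} in Lemma~\ref{la5}, which was established using only \eqref{ae51b} and \eqref{ae51}; this gives uniformity in $z$ directly from the semigroup contraction, without any separate argument, whereas your uniformity claim via dominated convergence and continuity needs uniform continuity made explicit. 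Second, your aside that the drift--martingale cross term in the second-moment computation is ``$O(h)$'' is imprecise (it is $O(h^{3/2})$, hence $o(h)$, which is what you actually need after dividing by $h$); this is a slip, not a flaw, but worth fixing.
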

The proof of Lemma \ref{la6} is deferred to Subsection \ref{subsec:proof_theorem_density}.

We refer to the operator $\Acc_{t}$ in \eqref{ae75} as the \emph{infinitesimal generator of $Z$ on $D$}. In the
second standing assumption we require that $\Acc_{t}$ is a non-degenerate operator. Notice that
$\Acc_{t}$ is defined only locally, on the domain $D$. In the following assumption and
throughout the paper $N\ge 2$ is a fixed integer\footnote{To simplify the presentation, we assume
$N\ge 2$. However, the proofs of neither the results in dimension one (i.e. $d=1$), nor the results for the derivatives of order one
or two in a generic dimension, do require this condition.}.
\begin{assumption}\label{assum1and}
The operator $\Acc_{t}$
satisfies the following conditions:
\begin{itemize}
  \item[(i)] the coefficients $\ac_{ij}
  ,\ac_{i}
  \in C_{P}^{N,1}([0,T_0[\times D)$, where $C_{P}^{N,\a}$ denotes the usual parabolic
H\"older space (see, for instance, Chapter 10.1 in \cite{FriedmanSDE2});
  \item[(ii)] $\Acc_{t}$ is elliptic on $D$, i.e. there exist $M>0$ and $\eps\in ]0,1[$ such that
\begin{align}\label{cond:ell-loc}
  \eps M |\z|^2 \leq \sum_{i,j=1}^{d}\ac_{ij}(t,z)\z_{i}\z_{j}\leq M  |\z|^2 ,\qquad t\in\left[0,T_{0}\right[,\ z\in D,\
 \z\in\mathbb{R}^d.
\end{align}
\end{itemize}
\end{assumption}

Finally, we state the third standing assumption.
\begin{assumption}\label{assum1ii}
$Z$ is a \emph{Feller process on $D$}, i.e. for any $T\in ]0,T_{0}[$ and {$\phi\in C_0(\R^{d})$}
the function $(t,z)\mapsto (\TT_{t,T}\phi)(z)$
is continuous on $[0,T[\times D$.
\end{assumption}

The following result summarizes some properties of the
law of $Z$. In particular
it states the existence of a {\it local transition density} for $Z$ on $D$, which is a non-negative
measurable function $\Gb=\Gb(t,z;T,\z)$, defined for $0\le t<T< T_{0}$ and $z,\z\in D$, such that,
for any $H\in \Bc(D)$ (Borel subset of $D$),
\begin{equation}\label{ae36_bis}
  \bar{p}(t,z;T,H)=\int_{H}\Gb(t,z;T,\z) d\z.
  \end{equation}
Moreover, it provides a double characterization of such local density, first as a solution to a
\emph{forward} Kolmogorov equation (w.r.t. the \emph{ending point} $(T,\z)$) and then as a
solution to a \emph{backward} Kolmogorov equation (w.r.t. the \emph{initial point} $(t,z)$). The
existence and the \emph{forward} representation follow from H\"ormander's theorem,
\cite{Hormander}, after proving that the law is a local solution, in the distributional sense, of
the adjoint of the infinitesimal generator of $Z$. This result is rather classical and is coherent
with the well-known results by \cite{kusuoka-stroock} (see also the more recent paper by
\cite{DeMarco2011}). In order to prove the \emph{backward} formulation we still employ
H\"ormander's theorem, but in this case the proof is more delicate and technically involved. In
fact, to prove that the law is a distributional solution of the generator of $Z$, it will be
crucial to use the Feller property combined with the classical pointwise estimates by
\cite{moser71} for weak solutions of parabolic PDEs. At this level of generality, the resulting
\emph{backward} representation for the transition local density appears to be novel and of
independent interest.
\begin{theorem}\label{la1}
Let Assumptions \ref{assum1i} and \ref{assum1and} be in force.
Then $Z$ has a local transition density $\Gb$ on $D$ such that, for any $(t,z)\in [0,T_{0}[\times D$, $\Gb(t,z;\cdot,\cdot)\in C^{N,1}_{P}\left(]t,T_{0}[\times D\right)$ and solves the forward
Kolmogorov equation
\begin{equation}\label{ae44}
  \left(\p_{T}-\Acc_{T}^{*}\right)
  f=0\qquad \text{on }]t,T_{0}[\times D.
\end{equation}
Here $\Acc_{T}^{*}$ denotes the formal adjoint of $\Acc_{T}$, acting as
\begin{align}
 \Acc^{*}_{T}f=\frac{1}{2}\sum_{i,j=1}^{d} \p_{z_{i}z_{j}}\left(\ac_{ij}(T,\cdot)f\right)-
 \sum_{i=1}^{d}\p_{z_{i}}\left(\ac_{i}(T,\cdot)f\right).
 \end{align}
If in addition also Assumption \ref{assum1ii} is satisfied, then $\Gb(\cdot,\cdot;T,\z)\in {C_{P}^{N+2,1}([0,T[\times D)}$ for any $(T,\z)\in ]0,T_{0}[\times D$, and solves the backward Kolmogorov equation
   \begin{equation}\label{eq:backward_Kolm}
   \left(\p_{t}+\Acc_{t}\right)f=0\qquad \text{on }{[0,T[\times D}.
   \end{equation}
\end{theorem}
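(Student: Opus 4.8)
The plan is to obtain the existence and the forward representation of $\Gb$ by localizing the classical machinery (Hörmander/Weyl-type elliptic regularity for the adjoint equation), and then to bootstrap the Feller property into the backward representation via Moser's pointwise estimates. First I would fix $(t,z)\in[0,T_0[\times D$ and a nested family of compactly contained subdomains $H\Subset D$, and use Lemma \ref{la5} — in particular \eqref{ae71}–\eqref{ae62} — to show that for every test function $f\in C_0^2([0,T_0]\times D)$ the map $T\mapsto \TT_{t,T}f(T,\cdot)(z)$ is differentiable with derivative $\TT_{t,T}\big((\p_T+\Acc_T)f(T,\cdot)\big)(z)$. Choosing $f(T,\cdot)=g(\cdot)$ time-independent and supported in $H$, and integrating from $t$ to $T$, this yields the integrated (mild) identity
\begin{equation}\label{eq:plan-mild}
 \int_{\R^d} g(\z)\,\bar p(t,z;T,d\z) - g(z) = \int_t^T \TT_{t,s}\big(\Acc_s g\big)(z)\,ds .
\end{equation}
Differentiating in $T$ at a.e. $T$ shows that, against such $g$, the measure $\bar p(t,z;T,\cdot)$ evolves so that its action is weakly governed by $\Acc_T$; testing with $g$ and integrating by parts moves $\Acc_s$ onto its formal adjoint $\Acc_s^{*}$, so that $\bar p(t,z;\cdot,\cdot)$ is, on $]t,T_0[\times D$, a distributional solution of the forward equation $(\p_T-\Acc_T^{*})\mu=0$. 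Under Assumption \ref{assum1and}(ii) this operator is uniformly parabolic on any $H\Subset D$ with $C_P^{N,1}$ coefficients, so hypoellipticity (Hörmander's theorem \cite{Hormander}, equivalently interior Schauder theory) forces $\bar p(t,z;T,\cdot)$ to have a density $\Gb(t,z;T,\cdot)$ that is $C^{N,1}_P$ in $(T,\z)$ on $]t,T_0[\times D$ and solves \eqref{ae44} classically; this is the step that recovers \cite{kusuoka-stroock}. The absolute continuity \eqref{ae36_bis} and the stated regularity of $\Gb(t,z;\cdot,\cdot)$ follow.

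For the backward statement I would reverse roles and view $(t,z)\mapsto \Gb(t,z;T,\z)$ for fixed $(T,\z)\in\,]0,T_0[\times D$. The natural candidate equation is $(\p_t+\Acc_t)f=0$ on $[0,T[\times D$. To get it weakly, combine the Chapman–Kolmogorov relation $\bar p(t,z;T,\cdot)=\int \bar p(t,z;s,dw)\,\bar p(s,w;T,\cdot)$ with the one-sided (left) limits in Assumption \ref{assum1i}, namely the equalities \eqref{ae51c}–\eqref{ae51d} with the \emph{incoming} increments $\bar p(t-h,z;t,d\z)/h$; these say precisely that the semigroup is weakly differentiable from the left with generator $\Acc_t$, so that against $\psi\in C_0^\infty(]0,T[\times D)$ the function $(t,z)\mapsto \Gb(t,z;T,\z)$ satisfies $\int\!\!\int \Gb\,(\p_t\psi+\Acc_t^{*,z}\psi)=0$, i.e. it is a distributional (weak) solution of the backward Kolmogorov equation on $[0,T[\times D$. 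Here the Feller property (Assumption \ref{assum1ii}) enters to guarantee that $(t,z)\mapsto\Gb(t,z;T,\z)$ is at least continuous — hence locally bounded and a bona fide weak solution rather than merely a distribution — and to control the boundary-in-time behaviour as $t\uparrow T$. Then I would invoke the classical \emph{interior} estimates of Moser \cite{moser71}: on any parabolic cylinder $Q\Subset\,[0,T[\times D$ a nonnegative weak solution of a uniformly parabolic equation with bounded measurable coefficients is locally bounded, whence — now using that the coefficients are $C_P^{N,1}$ — interior Schauder estimates upgrade it to $C^{N+2,1}_P$, and it solves \eqref{eq:backward_Kolm} in the classical sense. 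This gives exactly the claimed $\Gb(\cdot,\cdot;T,\z)\in C_P^{N+2,1}([0,T[\times D)$.

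The main obstacle is the backward half: deriving the weak backward equation honestly at this level of generality. Two points are delicate. First, the \emph{incoming}-increment limits in Assumption \ref{assum1i} must be converted into genuine left-differentiability of $s\mapsto \TT_{s,T}\phi(z)$ as a function of the \emph{initial} time; this requires the uniform-in-$z$ controls of \eqref{ae51b}–\eqref{ae51} on compacts $H$ to justify splitting the increment into a near-diagonal Taylor part and a negligible tail, exactly as in the proof of Lemma \ref{la5} but run ``from the left'', and one must be careful that the process may be killed or degenerate outside $D$ so that no global mass conservation is available — all estimates must stay strictly interior to $D$. Second, to apply Moser's estimate one needs the weak solution to be in the right energy class locally; since a priori we only know continuity from the Feller property, I would first localize with a cutoff, use the equation to get an interior Caccioppoli/energy bound (testing the weak formulation against $\zeta^2\Gb$ for a space-time cutoff $\zeta$ and using uniform ellipticity \eqref{cond:ell-loc}), thereby placing $\Gb(\cdot,\cdot;T,\z)$ in $L^2_tH^1_z$ locally, which is the hypothesis under which \cite{moser71} applies; the subsequent Schauder bootstrap is then routine given Assumption \ref{assum1and}(i). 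The finite regularity index $N+2$ (rather than $C^\infty$) is simply the ceiling imposed by the $C_P^{N,1}$ regularity of the coefficients, so no smoothing beyond that is expected or needed.
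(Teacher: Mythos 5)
Your treatment of the forward equation is essentially the paper's: show $\bar p(t,z;\cdot,\cdot)$ solves $(\p_T-\Acc_T^*)\mu=0$ in the sense of distributions by differentiating the semigroup (the paper formalizes this as the martingale property of $M^t_T$ in \eqref{ae41}, which is just the integrated form of \eqref{ae62}), then invoke H\"ormander / interior Schauder. That part is fine.

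The backward half has a genuine gap. You assert that the left-increment limits in \eqref{ae51c}--\eqref{ae51d}, together with Chapman--Kolmogorov, yield that $(t,z)\mapsto(\TT_{t,T}\phi)(z)$ (and hence $\Gb$) satisfies the weak backward equation. But the increment
\[
 \frac{\TT_{t-h,T}\phi-\TT_{t,T}\phi}{h}=\frac{(\TT_{t-h,t}-I)\,\TT_{t,T}\phi}{h}
\]
can only be related to $\Acc_t$ via \eqref{ae71} if $\TT_{t,T}\phi$ is already $C^2_0$ in $z$ --- which is precisely what one does not know a priori; Lemma \ref{la5} gives differentiability of $T\mapsto\TT_{t,T}f(T,\cdot)$ for \emph{given} $f\in C^2_0$, not of $t\mapsto\TT_{t,T}\phi$ for $\phi$ merely continuous. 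The paper closes exactly this loop by an argument you omit: it constructs the classical solution $f$ of a Cauchy--Dirichlet problem on $[0,T[\times B(z_0,r)$ with boundary data $(\TT_{t,T}\phi)(z)$ --- here the Feller property is used, to guarantee that the boundary data is continuous so that a classical solution exists --- and then identifies $f$ with $\TT_{t,T}\phi$ via the martingale property of $M^t$, Doob's optional sampling, and the strong Markov property (adapting Janson--Tysk). Without this step you have no way to turn the left-increment limits into the equation.

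Two further issues compound this. First, you invoke the Feller property to claim continuity of $(t,z)\mapsto\Gb(t,z;T,\z)$; but the Feller property is a statement about $\TT_{t,T}\phi$ for $\phi\in C_0$, not about the density, and there is no justification for transferring it to $\Gb$ at this stage. Second, your proposed Caccioppoli bound ``testing the weak formulation against $\zeta^2\Gb$'' presupposes the weak formulation you are trying to establish, so it is circular. The paper instead applies Moser's estimate to $\Gb(t,z;\cdot,\cdot)$ as a solution of the \emph{forward} equation --- already established --- to obtain the uniform-in-$z$ bound $\Gb(t,z;T,\z)\le 2c_0 r^{-d}$ of \eqref{eq:ste129}; this $L^\infty_{\rm loc}$ bound is what licenses dominated convergence when approximating the Dirac delta by $\phi_n\in C_0(D)$ and passing to the limit in the tested backward identity. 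That is a materially different use of Moser from yours. The route you sketch could likely be repaired, but only by reinstating the Dirichlet-problem/optional-sampling step, at which point it reduces to the paper's proof.
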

We will give a detailed proof of Theorem \ref{la1} in Subsection \ref{subsec:proof_theorem_density}. Before, in Subsections \ref{subsec:CEV} and \ref{subsec:SDEs}, we provide illustrative examples of popular models that satisfy
Assumptions \ref{assum1i}, \ref{assum1and} and  \ref{assum1ii}, and to which our analysis applies.
Only in order to deal with the {derivatives}
of a Call option price w.r.t. the strike, in
Section \ref{sec:price.error.local} we will introduce additional assumptions to ensure existence
and local boundedness of such derivatives.

\subsection{The CEV model}\label{subsec:CEV}
Consider the SDE
\begin{equation}\label{25}
  d\tilde{S}_{t}=\s \tilde{S}_{t}^{\b}dW_{t},
\end{equation}
where $\s>0$ and $0<\b<1$. It is well-known (cf. \cite{Ikedabook}, p. 221, or \cite{yorbook2},
Chapter 11) that \eqref{25} has a {\it unique strong solution} that can be represented, through the
transformation $X_{t}=\frac{\tilde{S}_{t}^{2(1-\b)}}{\s^{2}(1-\b)^{2}}$, in terms of the squared
Bessel process
  $$dX_{t}=\d dt+ 2\sqrt{X_{t}}dW_{t},$$
with $\d=\frac{1-2\b}{1-\b}$. The process $\tilde{S}$ has distinct properties according to the
parameter regimes $\b<\frac{1}{2}$ and $\b\ge \frac{1}{2}$. To describe these properties, first we
introduce the functions
\begin{equation}\label{28}
  \Gb_{\pm}(t,s;T,S)=\frac{s^{\frac{1}{2}-2\b}\sqrt{S}e^{-\frac{s^{2(1-\b)}+S^{2(1-\b)}}{2(1-\b)^{2}\s^{2}(T-t)}}}{(1-\b)\s^{2}(T-t)}
  I_{\pm\frac{ 1}{2(1-\b)}}\left(\frac{(s S)^{1-\b}}{(1-\b)^{2}\s^{2}(T-t)}\right),
\end{equation}
where $I_{\n}(x)$ is the modified Bessel function of the first kind defined by
  $$I_{\n}(x)=\left(\frac{x}{2}\right)^{\n}\sum_{k=0}^{\infty}\frac{x^{2k}}{2^{2k}k!\Gamma_E(\n+k+1)},$$
and $\Gamma_{E}$ represents the Euler Gamma function. Both $\Gb_{+}$ and $\Gb_{-}$ are fundamental
solutions of $(\p_{t}+\Acc)$ where $\Acc$ is the infinitesimal generator of $\hat{S}$:
\begin{equation}\label{ae50}
  \Acc=\frac{\s^{2}s^{2\b}}{2}\p_{ss}.
\end{equation}
Precisely, we have
  $$(\p_{t}+\Acc)\Gb_{\pm}(\cdot,\cdot;T,S)=0,\qquad \text{on }[0,T[\times\R_{>0},$$
and
  $$ \lim_{(t,s)\to(T,\bar{s})\atop t<T}\int_{\R_{>0}}\Gb_{\pm}(t,s;T,S)\phi(S)dS=\phi(\bar{s}), \qquad \bar{s}\in\R_{>0},$$
for any continuous and bounded function $\phi$.

The point $0$ is an attainable state for $\tilde{S}$. In particular, if $\b\ge \frac{1}{2}$ then
$0$ is absorbing: if we denote by $\t_{s}:=\inf\{\t\mid\tilde{S}_{\t}=0\}$ the first time
$\tilde{S}$ hits $0$ starting from $\tilde{S}_{0}=s\ge 0$, then we have $\tilde{S}_{t}=0$ for
$t\ge \t_{s}$. The law of $\tilde{S}$ has a Dirac delta component at the origin and the function
$\Gb_{+}$ in \eqref{28} is the transition {\it semi-density} of $\tilde{S}$ on $\R_{>0}$: more
precisely, denoting by $\tilde{p}$ the transition probability function of $\tilde{S}$, we have
  $$\tilde{p}(t,s;T,H)=\int_{H}\Gb_{+}(t,s;T,S)dS$$
for any Borel subset $H$ of $\R_{>0}$ and
  $$\int_{0}^{+\infty}\Gb_{+}(t,s;T,S)dS<1.$$
On the other hand, if $\b< \frac{1}{2}$ then $\tilde{S}$ reaches $0$ but it is reflected: in this
case $\Gb_{-}$, which integrates to one on $\R_{>0}$, is the transition density of $\tilde{S}$.
Moreover, $\tilde{S}$ is a strict local martingale (cf. \cite{DelbaenShirakawa2002} or
\cite{HestonLoewensteinWillard2007}) that ``cannot'' represent the risk-neutral price of an asset:
the intuitive idea is that arbitrage opportunities would arise investing in an asset whose price
is zero at the stopping time $\t_{s}$ but later becomes positive.

For this reason, in the CEV model introduced by \cite{Cox1975} the asset price is defined as the
process obtained by stopping the unique strong solution $\hat{S}$, starting from
$\tilde{S}_{0}=s$, of the SDE \eqref{25} at $\t_{s}$, that is
  $$S_{t}:=\tilde{S}_{t\wedge\t_{s}},\qquad t\ge 0.$$
For any $0<\b<1$, the transition semi-density of $S$ is $\Gb_{+}$ in \eqref{28}.
For this model, \cite{DelbaenShirakawa2002} show that, for any $0<\b<1$, the process is a non-negative martingale.

Now let $D$ be any domain compactly contained in $\R_{>0}$. By Lemma \ref{la6}, the stopped process
$S$ is a local diffusion on $D$ and satisfies Assumption \ref{assum1i}. The infinitesimal
generator $\Acc$ is the operator in \eqref{ae50}, has smooth coefficients and is uniformly
elliptic on $D$: thus Assumption \ref{assum1and} is satisfied {for any $N\in\N$}. Moreover, the
Feller property on $D$ (Assumption \ref{assum1ii}) follows from the explicit expression of the
transition semi-density or from the general results in \cite{ethier1986markov}, Chapter 8 (see
Problem 3 p.382 and Thm. 2.1 p.371).

The CEV model (and also its stochastic volatility counterpart, the popular SABR model used in
interest rates modeling) is an interesting example of degenerate model because the infinitesimal
generator is {\it not globally uniformly elliptic and the law of the price process is not
absolutely continuous w.r.t the Lebesgue measure}.

\begin{remark}
\cite{Durrleman2010}{, p. 175,  provided} formulas for the implied volatility in a local
volatility (LV) model with LV-function $\s=\s(s)$. His expression for the time-derivative of the
ATM implied volatility, denoted by $\Sigma$, is equal to
\begin{equation}\label{aed1}
  \p_{t}\Sigma(t,s)\vert_{t=0}=\frac{1}{12} s^2(s)^2 \sigma ''(s)-\frac{4}{3} s^2 \sigma (s) \sigma '(s)^2+\frac{1}{12} s \sigma (s)^2 \sigma '(s).
\end{equation}
{The latter} is slightly different from the expression we get from our Taylor expansion that, in
this particular case, can be computed as in Section \ref{sec:impvol} and reads as
\begin{equation}\label{aenoi1}
 \p_{t}\Sigma(t,s)\vert_{t=0}=\frac{1}{12} s^2 \sigma (s)^2 \sigma ''(s)-\frac{1}{24} s^2 \sigma (s) \sigma'(s)^2+\frac{1}{12} s \sigma (s)^2 \sigma '(s).
\end{equation}
Actually, {simple numerical tests performed in the CEV model confirm that formula \eqref{aenoi1}
is correct.} As a matter of example, in Table \ref{tab1} we show the values of
$\p_{t}\Sigma(t,1)\vert_{t=0}$ in the CEV model with $\s=S_{0}=1$ (cf. \eqref{25}) and
$\b=0.1,\dots,0.9$.
\begin{table}[ht]
 \caption{ATM IV time-derivative}
 \centering
 \label{tab1}
\begin{tabular}{l l l l}
 \hline\hline
 $\b$ & Numerical approx. & Taylor expansion & Durrleman\\
 \hline
0.1 & 0.0337524 & 0.03375 & -1.0125 \\
 0.2 & 0.0266639 & 0.0266667 & -0.8 \\
 0.3 & 0.0204115 & 0.0204167 & -0.6125 \\
 0.4 & 0.0149955 & 0.015 & -0.45 \\
 0.5 & 0.0104115 & 0.0104167 & -0.3125 \\
 0.6 & 0.00666029 & 0.00666667 & -0.2 \\
 0.7 & 0.00374753 & 0.00375 & -0.1125 \\
 0.8 & 0.00136839 & 0.00166667 & -0.05 \\
 0.9 & 0.000415421 & 0.000416667 & -0.0125
\end{tabular}
\end{table}
\end{remark}

\subsection{Multi-factor local-stochastic volatility models}\label{subsec:SDEs}
We consider a pricing model defined as the solution of a
system of SDEs of the form
\begin{align}\label{eq:StochVol}
&\begin{cases}
 d S_t =  \cy_1(t,S_t,Y_t)S_t \,d W^{(1)}_t,\\
  S_0 = s \in \Rb_{> 0},
\end{cases}
\\ \label{eq:StochVol_bis}
&\begin{cases}
 d Y^{(i)}_t =\mu_i(t,S_t,Y_t) d t + \cy_{i}(t,S_t,Y_t) d W^{(i)}_t, \qquad i=2,\dots,d,\\
 Y_0 = y \in \DomY,
\end{cases}
\end{align}
where
$W$ is a $d$-dimensional correlated Brownian motion with
\begin{equation}\label{eq:correlation}
d \< W^{(i)}, W^{(j)} \>_t = \rho_{ij}(t,S_t,Y_t) d t,\qquad
\ i,j=1,\dots,d.
\end{equation}

In the most classical setting, one assumes that the coefficients of the SDEs are measurable
functions, locally Lipschitz continuous in the spatial variables
$(s,y)
$ uniformly w.r.t. $t\in[0,T_{0}]$, and have sub-linear growth in $(s,y)$; for more details we
refer, for instance, to condition (A$'$) p.113 of Chapter 5.3 in \cite{FriedmanSDE1}. In this
case, a unique global-in-time solution $(S,Y)$ exists, which is a Feller process\footnote{The
definition of Feller process given in \cite{FriedmanSDE1}, Chapter 2.2, is slightly different from
ours. However the Feller property for solutions of SDEs is proved in \cite{FriedmanSDE1} as a
consequence of Lemma 5.3.3: this lemma also implies the Feller property as given in Assumption
\ref{assum1ii}.} and a diffusion (see Theorems 5.3.4 and 5.4.2 in \cite{FriedmanSDE1}).

Usually, however, the above conditions are considered too restrictive and of limited practical
use. Actually, we shall see that Assumptions \ref{assum1i}, \ref{assum1and} and \ref{assum1ii} are satisfied under much weaker conditions. To see this, we first note that the infinitesimal generator $\Acc$ of $(S,Y)$ is the operator
of the form \eqref{ae75} with coefficients given by
\begin{align} \bar{a}_1= 0, & &
\bar{a}_{i} = \mu_i,    &&   \bar{a}_{11}=\rho_{11}\eta^2_1 s^2,  & &
\bar{a}_{1i}=\bar{a}_{i1}=\rho_{1i}\eta_i \eta_1s,   & &
\bar{a}_{ij}=\bar{a}_{ji}=\rho_{ij}\eta_i\eta_j       & & i,j=2,\cdots,d.
\end{align}
Now, Assumption \ref{assum1and} is straightforward to verify and applies to the great majority of the models used in finance, and thus, by Lemma \ref{la6}, Assumption \ref{assum1i} is also satisfied provided that a solution to the system \eqref{eq:StochVol}-\eqref{eq:StochVol_bis} exists. The Feller property in Assumption \ref{assum1ii} has to be verified case by case. Results ensuring the Feller property for the solution of an SDE under weak regularity conditions on the coefficients (H\"older or local Lipschitz continuity) have been recently proved by \cite{Wang-Feller} (see Proposition 2.1) and by \cite{WangZhang2015}. Moreover, the results of Chapter 8 in \cite{ethier1986markov} cover several SDEs related to financial models.

\smallskip
As a matter of example, we analyze the classical model proposed by \cite{heston1993}. Set $d=2$ and
\begin{align}\label{eq:Heston_S}
&
 d S_t =   S_t \sqrt{Y_t} d W^{(1)}_t,&&  S_0 \in \Rb_{>0},
\\ \label{eq:Heston V}
&
 d Y_t =\kappa (\theta-Y_t) d t + \delta \sqrt{Y_t} d W^{(2)}_t,&& Y_0 \in \R_{> 0},
\end{align}
where $\delta$ is a positive constant (the so-called vol-of-vol parameter), $\kappa,\theta>0$ are
the drift-mean and the mean-reverting term of the variance process respectively, and $W$ is
a $2$-dimensional Brownian motion with correlation $\rho\in ]-1,1[$.
It is well known that the joint transition probability function $\bar{p}$ in \eqref{ae67} admits an explicit characterization in terms of its Fourier-Laplace transform.
Precisely, setting $X_{t}=\log S_{t}$, and assuming for simplicity $\d=1$, we have 
\begin{equation}\label{eq:Heston_char_exp}
\hat{p}(t,x,y;T,\x,\y):=E_{t,x,y}\left[e^{\ii \x X_{T}-\y Y_{T}}\right]=e^{\ii x \xi-yA(T-t,\xi,\eta)} B(T-t,\xi,\eta),
\end{equation}
where
\begin{align}
A(u,\xi,\eta)=  \frac{b(\xi)g(\xi,\eta) e^{-D(\xi)(u-s)}- a(\xi)}{g(\xi,\eta)
  e^{-D(\xi)(u-s)}-1}, && B(u,\xi,\eta)=  e^{-\k \theta a(\xi)u} \left(\frac{g(\xi,\eta)-1}{g(\xi,\eta)e^{-D(\xi)u}-1}\right)^{2\k\theta},
\end{align}
with
\begin{align}
g(\xi,\eta)=\frac{a(\xi)-\eta}{b(\xi)-\eta}, && a(\xi)=\ii \xi\rho - \k+D(\xi), &&   b(\xi)=\ii \xi\rho - \k-D(\xi), &&   D(\xi)=\sqrt{(\ii \xi\rho - \k)^{2}+\xi\left(\xi+\ii \right)}.
\end{align}
Using the explicit knowledge of the characteristic function of $S$, \cite{andersen2007moment},
Proposition 2.5, prove that $S$ is a martingale and can reach neither $\infty$ nor $0$ in finite
time (see also \cite{LionsMusiela} for related results in a more general setting). The variance
process $Y$ can reach the boundary with positive probability if the Feller condition $2\k \theta
\geq \delta^{2}$ is violated and in this case the origin is a reflecting boundary. In any case,
the distribution of $Y_t$ has no mass at $0$ for any positive $t$.

By Lemma \ref{la6}, Assumptions \ref{assum1i} is verified on any domain $D$ compactly contained in
$ \R_{>0}\times \R_{> 0}$ and the generator $\Acc$ of $(S,Y)$ reads as
\begin{equation}
\Acc= \frac{y s^{2}}{2}\p_{ss}+\frac{\delta^{2}y}{2}\p_{yy}+\rho\delta y s \,\p_{sy}
+\kappa(\theta-y)\partial_y,\qquad (s,y)\in \R_{>0}\times \R_{\geq 0}.
\end{equation}
It is also clear that Assumption \ref{assum1and} is satisfied on $D$ for any $N\in\N$.
Finally, the Feller property follows by the explicit expression of the characteristic function in \eqref{eq:Heston_char_exp}, and thus Assumption \ref{assum1ii} is also satisfied.
\begin{remark}\label{rem:Heston_density}
By Theorem \ref{la1}, the couple $(S,Y)$ in the Heston model has a smooth local transition density
on any domain $D$ compactly contained in $ \R_{>0}\times \R_{> 0}$. Therefore, since $p\big(t,z;T,
\R^2 \setminus ( \R_{>0}\times \R_{> 0}) \big)=0$, the process $(S,Y)$ has a  transition density
on $\R^2$, which is smooth on $\R_{>0}\times \R_{> 0}$. In particular, the marginal distribution
of $S_t$ has a smooth density on $\R_{>0}$, which is consistent with \cite{Rollin2010}.
\end{remark}

\subsection{Proofs of Lemmas \ref{la5},  \ref{la6} and Theorem \ref{la1}
}\label{subsec:proof_theorem_density}

\proof[of Lemma \ref{la5}] We first remark that in the statement of the lemma, the short notation
(see \eqref{ae53})
  $$\lim_{T-t\rightarrow 0^+} \left\| {\TT_{t,T}\phi(T,\cdot)-\phi(t,\cdot)}\right\|_{\infty}=0,$$
must be interpreted as
  $$\lim_{h\rightarrow 0^+} \left\| {\TT_{t,t+h}\phi(t+h,\cdot)-\phi(t,\cdot)}\right\|_{\infty}=
  \lim_{h\rightarrow 0^+} \left\| {\TT_{t-h,t}\phi(t,\cdot)-\phi(t-h,\cdot)}\right\|_{\infty}=0,$$
 and analogously for \eqref{ae71}.
  Hereafter, for greater convenience, we shall use this abbreviation systematically. Now let us prove
\eqref{ae53}. For a given $\phi \in C_{0}([0,T_0]\times D)$, we denote by $H_{\phi}$ the support
of $\phi$ and consider a compact subset $H$ of $D$ such that $H_{\phi}\subseteq [0,T_0]\times H$
and $\bar{\d}:=\text{dist}\big(H_{\phi},[0,T_{0}]\times (\R^{d}\setminus H)\big)>0$. Then we have
  $$\TT_{t,T}\phi(T,z)-\phi(t,z)=I_{t,T,1}(z)+I_{t,T,2}(z)+I_{t,T,3}(z)$$
where
\begin{align}
 I_{t,T,1}(z)&= \int_{H}\bar{p}(t,z;T,d\z)\left(\phi(T,\z)-\phi(T,z)\right),\\
 I_{t,T,2}(z)&= \left(\phi(T,z)-\phi(t,z)\right)\int_{H}\bar{p}(t,z;T,d\z),
 \\
 I_{t,T,3}(z)&=-\phi(t,z)\int_{(\R_{\ge 0}\times \DomY)\setminus H}\bar{p}(t,z;T,d\z).
\end{align}
Since $\phi$ is uniformly continuous, for any $\e>0$ there exists $\d_{\e}>0$ such that
\begin{align}
 \left|I_{t,T,1}(z)\right|\le
 \e\int_{|z-\z|\le\d_{\e}}\bar{p}(t,z;T,d\z)+2\|\phi\|_{\infty}\int_{H\cap\{|z-\z|>\d_{\e}\}}\bar{p}(t,z;T,d\z)
\end{align}
and therefore, by \eqref{ae51b},
  $$\limsup_{T-t\rightarrow 0^+}\left|I_{t,T,1}(z)\right|\le\e$$
uniformly w.r.t. $z\in \R_{\ge 0}\times \DomY$. Moreover  we have
 $$\left|I_{t,T,2}(z)\right|\le \left|\phi(T,z)-\phi(t,z)\right|\longrightarrow 0$$
as $T-t\to 0^+$, uniformly w.r.t. $z$.
On the other hand, by \eqref{ae51} we have
  $$\left|I_{t,T,3}(z)\right|\le\|\phi\|_{\infty}\int_{|z-\z|>\bar{\d}}\bar{p}(t,z;T,d\z)\longrightarrow 0$$
as $T-t\rightarrow 0^+$, uniformly w.r.t. $z\in H_{\phi}$, and $I_{t,T,3}(z)\equiv 0$ if $z\notin
H_{\phi}$. This concludes the proof of \eqref{ae53}. Notice that, for any $z\in D$ and $r>0$ such that $B(z,r):=\{\z\mid|z-\z|<r\}\subseteq D$, we have
\begin{equation}\label{ae73}
 \lim_{T-t\rightarrow 0^+}\int_{B(z,r)}\bar{p}(t,z;T,d\z)=1;
\end{equation}
indeed for any $\phi \in C_{0}(B(z,r))$ such that $|\phi|\le 1$ and $\phi(z)=1$, by \eqref{ae53}
we have
  $$1\ge \int_{B(z,r)}\bar{p}(t,z;T,d\z)\ge\TT_{t,T}\phi(z)\longrightarrow \phi(z)=1$$
as $T-t\rightarrow 0^+$.

The proof of \eqref{ae71} is similar: for any $f\in C_{0}^{2}\left([0,T_{0}]\times D\right)$ we
have
  $$\frac{\TT_{t,T}f(T,z)-f(t,z)}{T-t}=I_{t,T,1}(z)+I_{t,T,2}(z)$$
where
\begin{align}\label{ae74}
 I_{t,T,1}(z)= \int_{H}\bar{p}(t,z;T,d\z)\frac{f(T,\z)-f(t,z)}{T-t},\qquad
 I_{t,T,2}(z)=\frac{f(t,z)}{T-t}\int_{(\R_{\ge 0}\times \DomY)\setminus H}\bar{p}(t,z;T,d\z),
\end{align}
with $H$ defined analogously to how it was defined in the proof of \eqref{ae53}. Again, by \eqref{ae51} the term
$I_{t,T,2}(z)$ is negligible in the limit. As for $I_{t,T,1}(z)$, it suffices to plug the Taylor
formula
\begin{equation}\label{ae63}
\begin{split}
  f(T,\z)-f(t,z)=&\,(T-t)\p_{t}f(t,z)+\sum_{i=1}^{d}(\z_{i}-z_{i})\p_{z_{i}}f(t,z)\\
  &+\frac{1}{2}\sum_{i,j=1}^{d}(\z_{i}-z_{i})(\z_{j}-z_{j})\p_{z_{i}z_{j}}f(t,z)+\text{o}(|T-t|)+\text{o}(|z-\z|^{2}).
\end{split}
\end{equation}
into \eqref{ae74} and pass to the limit using \eqref{ae73}, \eqref{ae51c} and \eqref{ae51d}. This
proves \eqref{ae71}.

Finally, we have
\begin{align}
 &\left\| \frac{\TT_{t,T+h}f(T+h,\cdot)-\TT_{t,T}f(T,\cdot)}{h} - \TT_{t,T} ((\p_{T}+\Acc_{T}) f(T,\cdot))
 \right\|_{L^{\infty}(\R_{\geq 0}\times \DomY)}\\
 & = \left\| \TT_{t,T}\left(
 \frac{\TT_{T,T+h}f(T+h,\cdot)-f(T,\cdot)}{h}- (\p_{T}+\Acc_{T}) f(T,\cdot) \right) \right\|_{L^{\infty}(\R_{\geq 0}\times
 \DomY)} \\
 &\leq \left\| \frac{\TT_{T,T+h}f(T+h,\cdot)-f(T,\cdot)}{h}- (\p_{T}+\Acc_{T}) f(T,\cdot) \right\|_{L^{\infty}(\R_{\geq 0}\times \DomY)}
 \longrightarrow 0,\qquad \text{as }h\to 0^+,
\end{align}
where the last limit follows from \eqref{ae71}. This proves the existence of the right derivative.
For the left derivative it suffices to use the identity
\begin{align}
  &\frac{\TT_{t,T-h}f(T-h,\cdot)-\TT_{t,T}f(T,\cdot)}{-h}-\TT_{t,T}\left((\p_{T}+\Acc_{T})f(T,\cdot)\right)\\
  &=  \TT_{t,T-h}\left(\frac{\TT_{T-h,T}-I}{h}-(\p_{T}+\Acc_{T})\right)f(T,\cdot)+\left(\TT_{t,T-h}-\TT_{t,T}\right)\left((\p_{T}+\Acc_{T})f(T,\cdot)\right),
\end{align}
where $I$ is the identity operator. This concludes the proof.
\endproof

\proof[of Lemma \ref{la6}.] \emph{Step 1.} We prove \eqref{ae51b}. Fix $\d>0$ and $H$, compact
subset of $D$. {Consider a family of functions $(\phi_{z})_{z\in\R^d}$ such that $\phi_{z}(z)=0$,
$\phi_z(\z)\equiv 1$ for $\z\in H\cap\{|\z-z|>\d\}$ and $\phi_{z}\in C_{0}^{\infty}(D)$ with all
the derivatives bounded by a constant $C_{1}$ which depends on $D,H$ and $\d$ but not on $z$.} By
the It\^o formula we have
\begin{equation}\label{ae77}
  \phi_z(\hat{Z}_{T})= \phi_z(\hat{Z}_{t}) + \int_{t}^{T}\Acc_{s}\phi_z(\hat{Z}_{s})ds+\int_{t}^{T}\nabla \phi_z(\hat{Z}_{s})\s(s,\hat{Z}_{s})dW_{s},
\end{equation}
with $\Acc_{s}$ as defined in \eqref{ae75} and $\ac_{i},\ac_{ij}$ as in \eqref{ae76}. Notice that
  $$
  \left|\Acc_{s}\phi_z(\hat{Z}_{s})\right|+\left|\nabla \phi_z(\hat{Z}_{s})\s(s,\hat{Z}_{s})\right|\le C_{2}, \qquad s\in [0,T_0],\quad z\in \R^d,
  $$
with $C_{2}$ dependent only on $C_{1}$ and the $L^{\infty}([0,T_{0}]\times D)$-norm of the
coefficients of the SDE. Let $\bar{p}(t,z;T,d\z)$ denote the transition probability of the stopped
process $Z_{T}=\hat{Z}_{T\wedge \t}$. Then, by recalling the definition of $\t$ and since
$D\subseteq D'$ and $\phi_{z}$ has  compact support in $D$, we have
  $$\int_{\{|z-\z|>\d\}\cap H}\bar{p}(t,z;T,d\z)\le E_{t,z}\big[{\phi_z^{4}}(\hat{Z}_{T\wedge\t})\big]\le E_{t,z}\big[{\phi_z^{4}}(\hat{Z}_{T})\big],$$
and \eqref{ae51b} follows from \eqref{ae77}, the H\"older inequality and Doob's maximal inequality
(in the form of Corollary 6.4 p.87 in \cite{FriedmanSDE1} with $m=2$). The proof of \eqref{ae51}
is analogous and is omitted.

\medskip\noindent
\emph{Step 2.} We prove \eqref{ae51c}. Fix $1\leq i\leq d$ and $H$, compact subset of $D$. We
first remark that it is sufficient to prove the thesis for $\d<\bar{\d}:=\text{\rm dist}(H,\p D)$.
Indeed, we have
  $$\frac{1}{T-t}\int_{|z-\z|<\d}(\z_{i}-z_{i})\bar{p}(t,z;T,d\z)=\frac{1}{T-t}\int_{|z-\z|<\bar{\d}}(\z_{i}-z_{i})\bar{p}(t,z;T,d\z)+I_{t,T}$$
where, by \eqref{ae51},
  $$I_{t,T}=\frac{1}{T-t}\int_{\bar{\d}\le |z-\z|<\d}(\z_{i}-z_{i})\bar{p}(t,z;T,d\z)\longrightarrow 0$$
as $T-t\to 0^{+}$, uniformly w.r.t $z\in H$.

Next, we consider a family of functions $(\phi_{z})_{z\in H}$ such that
$\phi_{z}(\z)=\z_{i}-z_{i}$ for $|\z-z|<\d$ and $\phi_{z}\in C_{0}^{\infty}(D)$ with all the
derivatives bounded by a constant $C_{1}$ which depends on $D,H$ and $\d$ but not on $z$. Note
that
  \begin{equation}\label{eq:ste120}
  \left|\nabla \phi_{z}(Z_{s})\s(s,Z_{s})\right|\le C_{2}, \qquad s\in [0,T_0],\quad z\in H,
  \end{equation}
with $C_{2}$ dependent only on $C_{1}$ and the $L^{\infty}([0,T_{0}]\times D)$-norm of the
coefficients of the SDE. Now, we set $\Psi_{z}(t,\cdot)=\Acc_{t}\phi_{z}$ and note that $\Psi_{z}(t,\z)=a_{i}(t,\z)$ for
$|\z-z|<\d$. Denoting again by $\bar{p}(t,z;T,d\z)$ the transition probability of the stopped
process $(\hat{Z}_{T\wedge \t})$, we have
\begin{align}
 \frac{1}{T-t}\int_{|z-\z|<\d}(\z_{i}-z_{i})\bar{p}(t,z;T,d\z) - \ac_{i}(t,z) &= I_{1,t,T,z}+I_{2,t,T,z}
\end{align}
where, by \eqref{ae51},
  $$I_{1,t,T,z}:=-\frac{1}{T-t}\int_{|z-\z|\ge \d}\bar{p}(t,z;T,d\z)\phi_{z}(\z)\longrightarrow 0$$
as $T-t\to 0^{+}$, uniformly in $H$, and
\begin{align}
 I_{2,t,T,z}&:=E_{t,z}\left[\frac{{ \phi_{z}}(\hat{Z}_{T\wedge \t})}{T-t}-\Psi_{z}(t,z)\right]
\intertext{(since by assumption $D\subseteq D'$ and $\phi_{z}$ has compact support in $D$, and using \eqref{ae77} and the fact that, by
\eqref{eq:ste120}, the stochastic integral is a true martingale)}
 &=E_{t,z}\left[\frac{1}{T-t}\int_{t}^{T}\Acc_{s}\phi_{z}(\hat{Z}_{s\wedge \t})ds-\Psi_{z}(t,z)\right] \\
 &=E_{t,z}\left[\int_{0}^{1}\Psi_{z}(t+ \rho(T-t),\hat{Z}_{\left(t+ \rho(T-t)\right)\wedge\t})d\rho-\Psi_{z}(t,z)\right]
\intertext{(by Fubini's theorem)}
 &=\int_{0}^{1}\left(\left(\TT_{t,t+ \rho(T-t)}\Psi_{z}(t+ \rho(T-t),\cdot)\right)(z)
 -\Psi_{z}(t,z)\right)d\rho.
\end{align}
Thus, by \eqref{ae53} and the fact that $\Psi_{z}(t,\cdot)\in C_{0}([0,T_{0}]\times D)$ by
definition, we infer that $I_{2,t,T,z}$ converges to zero
as $T-t\to 0^{+}$, uniformly w.r.t. $z\in H$. We remark here explicitly that \eqref{ae53} in Lemma
\ref{la5} is proved using \eqref{ae51b} and \eqref{ae51} only, which in turn have already been
proved for the stopped process in the previous step; therefore, no circular argument has been used.
The proof of \eqref{ae51d} is based on analogous arguments; thus we leave the details to the
reader.
\endproof

\proof[of Theorem \ref{la1}] We fix $(t,z)\in [0,T_{0}[\times D$ and $f\in
C^{2}_{0}\left([0,T_{0}[\times D\right)$, and show that the process
\begin{equation}\label{ae41}
  M^{t}_{T}:=
  f(T,Z_{T})-f(t,Z_{t})-\int_{t}^{T}\left(\p_{u}+\Acc_{u}\right)f(u,Z_{u})du,\qquad
  {t\leq T<T_0},
\end{equation}
is a $\F^{t}$-martingale. First observe that, integrating \eqref{ae62}, we get the identity
\begin{equation}\label{ae68}
 \left(\TT_{t,T}f(T,\cdot)\right)(z)-f(t,z)=\int_{t}^{T}\TT_{t,\t}\left(\left(\p_{\t}+\Acc_{\t}\right)f(\t,\cdot)\right)(z)d\t, \qquad T\in ]t,T_0[.
\end{equation}
Note that the integrand in \eqref{ae68} is bounded, as a function of $\tau$, because of Assumption \ref{assum1and} and since $f\in C^{2}_{0}\left([0,T_{0}[\times D\right)$ and $\TT_{t,\tau}$ is a contraction. Now,
for $\t\in[t,T]$ we have
\begin{align}
  E_{t,z}\left[M^{t}_{T}\mid\F^{t}_{\t}\right]&=M^{t}_{\t}+E_{t,z}\left[f(T,Z_{T})-f(\t,Z_{\t})-\int_{\t}^{T}\left(\p_{u}+
  \Acc_{u}\right)f(u,Z_{u})du\mid\F^{t}_{\t}\right]\\
  &=M^{t}_{\t}+{\Phi(\t,Z_{\t})}
\end{align}
where, by the Markov property,
\begin{align}
{\Phi(\tau,z)}&=E_{\t,z}\left[f(T,Z_{T})-f(\t,z)-\int_{\t}^{T}\left(\p_{u}+
  \Acc_{u}\right)f(u,Z_{u})du\right]
\intertext{(by Fubini's theorem)}
  &=\left(\TT_{\t,T}f(T,\cdot)\right)(z)-f(\t,z)-\int_{\t}^{T}\TT_{\t,u}\left(\left(\p_{u}+
  \Acc_{u}\right)f(u,\cdot)\right)(z)du
\end{align}
which is $0$ by \eqref{ae68}.

Notice that $M^{t}_{t}=0$, thus for any $f\in C^{2}_{0}\left(]t,T_{0}[\times D\right)$ we have 
\begin{equation}\label{ae42}
  0=E_{t,z}\left[M^{t}_{T_{0}}\right]=\int_{t}^{T_{0}}\int_{{D}}\bar{p}(t,z;T,d\z)\left(\p_{T}+\Acc_{T}\right)f(T,\z)d T.
\end{equation}
Since $f$ is arbitrary, equation \eqref{ae42} means that $\bar{p}(t,z;\cdot,\cdot)$ satisfies
equation \eqref{ae44} on $]t,T_{0}[\times {D}$ in the sense of distributions. If the coefficients
of the generator are smooth functions, then from H\"ormander's theorem (see, for instance, Section
V.38 in \cite{RogersWilliams}) we infer that $\bar{p}(t,z;\cdot,\cdot)$ admits a local density
$\Gb(t,z;\cdot,\cdot)$ which is a smooth function and solves the forward Kolmogorov PDE on
$]t,T_{0}[\times {D}$. In the general case, it suffices to use a standard regularization argument
by smoothing the coefficients and then applying Schauder's interior estimates (cf.
\cite{FriedmanSDE2}, Chapter 10.1): in regard to this, we refer for instance to
\cite{Kusuoka2015}. The first part of the statement then follows since $z$ and $r$ are arbitrary.

Next, we use the classical Moser's pointwise estimates (see \cite{moser71} and the more
recent and general formulation in Corollary 1.4 in \cite{pascpol1}) to prove a
$L^{\infty}_{\text{\rm loc}}$-estimate of $\Gb$ that will be used in the second part of the proof.
More precisely, let us fix $(t,z)\in[0,T_{0}[\times D$, $T\in]t,T_{0}[$ and $H$, compact subset of
$D$, and set $r=\frac{1}{2}\min\{\sqrt{T_{0}-T},\sqrt{T-t},\text{\rm dist}(H,\p D)\}$. Since
$\Gb(t,z;\cdot,\cdot)$ solves the PDE $\left(\p_{T}-\Acc_{T}^{*}\right)\Gb(t,z;\cdot,\cdot)=0$ on
$]t,T_{0}[\times D$, by Moser's estimate we have that
\begin{equation}\label{eq:ste129}
 \Gb(t,z;T,\z) \leq \frac{c_{0}}{r^{d+2}}\int_{T-r^{2}}^{T+r^{2}}\int_{B(\z,r)} \Gb(t,z;\bar{T},\bar{\zeta}) d \bar{\z} d \bar{T} \leq 2c_{0}r^{-d},\qquad \z\in H,
\end{equation}
where {\it the constant $c_{0}$ depends only on the dimension $d$ and the local-ellipticity
constant $M$ of Assumption \ref{assum1and}-(ii)}. We notice explicitly that the constant $c_{0}$
in \eqref{eq:ste129} is independent of $z\in D$ and $\z\in H$.

\smallskip
To prove the second part of Theorem \ref{la1}, we adapt the argument of Theorem 2.7 in
\cite{JansonTysk2006}.
We fix $\phi\in C_{0}(D)$, $ T\in ]0,T_{0}[$, $z_{0}\in D$ and $r>0$ such that the closure of the
ball $B(z_{0},r)$ is contained in $D$. Then we denote by $f$ the smooth solution of
\begin{equation}\label{ae80}
  \begin{cases}
  \left(\p_{t}+{\Acc_{t}}\right)f=0\qquad & \text{on } [0,T[\times B(z_{0},r), \\
    f(t,z)=\left(\TT_{t,T}\phi\right)(z) &  (t,z)\in\p_{P}\left([0,T]\times B(z_{0},r)\right),
  \end{cases}
\end{equation}
where
  $$\p_{P}\left([0,T]\times B(z_{0},r)\right):=\left([0,T]\times \p B(z_{0},r)\cup\left(\{T\}\times B(z_{0},r)\right)\right)$$
is the parabolic boundary of the cylinder $[0,T]\times B(z_{0},r)$. Such a solution exists because
$\Acc_{t}$ is uniformly elliptic on $[0,T_{0}[\times D$ and $(t,z)\mapsto (\TT_{t,T}\phi)(z)$ is
continuous on $[0,T]\times D$ by the Feller property (cf. Assumption \ref{assum1ii})) and
\eqref{ae53}.

Now, we fix $t\in [0,T[$ and denote by $\t_{0}$ the $t$-stopping time defined as $\t_{0}=T \wedge
\t_{1}$ where $\t_{1}$ is the first exit time, after $t$, of
$Z$ from $B(z_{0},r)$.
By the $\F^{t}$-martingale property of the process $M^{t}$ in \eqref{ae41}, with $f$ as in
\eqref{ae80}, and the Optional sampling theorem, we have the stochastic representation
\begin{equation}\label{ae33}
  f(t,z)=E_{t,z}\left[(\TT_{\t_{0},T}\phi)(Z_{\t_{0}})\right].
\end{equation}
On the other hand, {for $(t,z)\in [0,T[\times B(z_{0},r)$} we have
\begin{align}
 (\TT_{t,T}\phi)(z)&=E_{t,z}\left[\phi(Z_{T})\right]  =E_{t,z}\left[E_{t,z}\left[\phi(Z_{T})\mid \F^{t}_{\t_{0}}\right]\right]=
\intertext{(by the strong Markov property)}
 &=E_{t,z}\left[(\TT_{\t_{0},T}\phi)(Z_{\t_{0}})\right]=f(t,z), \label{ae30}
\end{align}
and in particular $(t,z)\mapsto(\TT_{t,T}\phi)(z)$ solves the backward equation \eqref{eq:backward_Kolm}.

Finally, we consider a sequence $(\phi_{n})_{n\in\N}$ of functions in $C_{0}(D)$, approximating a
Dirac delta $\d_{\bar{z}}$ for a fixed $\bar{z}\in D$. We also fix a test function $\psi\in
C_{0}^{\infty}(]0,T[\times D)$ and integrate by parts to obtain
\begin{align}
 0&=\int_{0}^{T}\int_{D}\left(\p_{t}+\Acc_{t}\right)(\TT_{t,T}\phi_{n})(z) \psi(t,z)dtdz\\
 &=\int_{0}^{T}\int_{D}(\TT_{t,T}\phi_{n})(z)\left(-\p_{t}+\Acc_{t}^{*}\right)\psi(t,z)dtdz\\
 \label{ae88}
 &=\int_{0}^{T}\int_{D}\int_{D}\Gb(t,z;T,\z)\phi_{n}(\z)d\z
 \left(-\p_{t}+\Acc_{t}^{*}\right)\psi(t,z)dtdz.
\end{align}
Note that $\z\mapsto\Gb(t,z;T,\z)$ is a continuous function for $t<T$, and therefore
  $$\int_{D}\Gb(t,z;T,\z)\phi_{n}(\z)d\z\longrightarrow \Gb(t,z;T,\bar{\z})$$
pointwisely. On the other hand, the $L^{\infty}_{\text{\rm loc}}$-estimate \eqref{eq:ste129} of
$\Gb$ allows to pass to the limit as $n\to\infty$ in \eqref{ae88}, using the dominated convergence
theorem, to get
 $$\int_{0}^{T}\int_{D}\Gb(t,z;T,\bar{z})\left(-\p_{t}+\Acc_{t}^{*}\right)\psi(t,z)dtdz=0.$$
This shows that $\Gb(\cdot,\cdot;T,\z)$ is a distributional solution of \eqref{eq:backward_Kolm}
on $[0,T[\times D$ and we conclude using again H\"ormander's theorem.
\endproof
\begin{remark}\label{ra6}
The same argument used to prove \eqref{ae30} applies to the case of $\phi(s,y)=(s-K)^{+}$, and allows to prove that the expectation $E_{t,s,v}\big[(S_{T}-K)^{+}\big]$ solves the backward equation
\eqref{eq:backward_Kolm} as a function of $(t,s,v)$. Indeed, it suffices to use a standard localization technique and the
fact that the Call payoff $(S_{T}-K)^{+}$ is integrable because $S$ is a martingale by assumption.
\end{remark}

\section{Analytical approximations of prices and implied volatilities}\label{asec3}
Here we briefly recall the construction proposed in \cite{LPP2} of an
explicit approximating series for option prices, along with a consequent polynomial expansion for
the related implied volatility. Such construction relies on a singular perturbation technique that
allows, in its most general form, to carry out closed-form expansions for the local transition density; this leads to an approximation of the solution to the
related backward Cauchy problem with generic final datum $\varphi$. Such technique has been
recently fully described in \cite{LPP4} in the
uniformly parabolic setting,
and subsequently extended in \cite{PP_compte_rendu} to the case of
locally parabolic operators and in \cite{lorig-pagliarani-pascucci-1} to models with jumps.
{Moreover, a recent extension of this technique to \emph{utility
indifference pricing} was proposed by \cite{Lorig-indifference}.}

We consider a model $Z=(S,Y)$ that satisfies the Assumptions \ref{assum1i}, \ref{assum1and} and
\ref{assum1ii} in Section \ref{sec:model}. We denote by $C_{t,T,K}$ the time $t$ no-arbitrage
value of a European Call option
with positive strike $K$ and maturity $T\le T_{0}$, defined as $C_{t,T,K} = v(t,S_t,Y_t;T,K)$ where
\begin{align}\label{eq:v}
 v(t,s,y;T,K):= E_{t,s,y} [(  S_T-K )^{+}],\qquad (t,s,y)\in [0,T]\times \R_{\ge 0}\times
 \DomY.
\end{align}
Clearly\footnote{Simply note that $(S_T-K )^{+}\le S_{T}$ and $S$ is a martingale by assumption.}
we have $v(t,0,y;T,K)\equiv 0$ and therefore, to avoid trivial situations, we may assume a
positive initial price, i.e. $s>0$.
As a consequence of Theorem \ref{la1} (see also Remark
\ref{ra6}), for any positive $K$, the function $v$ in \eqref{eq:v} is such that
$v(\cdot,\cdot;T,K)\in C_{P}^{N+2,1}(]0,T[\times D)\cap C([0,T]\times {D})$ and solves the
backward Kolmogorov equation \eqref{eq:backward_Kolm}:
 $$\left(\p_{t}+\Acc_{t}\right)v(\cdot,\cdot;T,K)=0\qquad \text{on }]0,T[\times D.$$

As it will be shown in Section \ref{sec:impvol}, in order to obtain an explicit expansion of the implied volatility, it is crucial to expand the Call price {around} a Black\&Scholes price.
Since the perturbation technique that we employ naturally yields Gaussian approximations at the leading term, we shall work in logarithmic variables.
Therefore, for any $T\in]0,T_{0}]$ and $k\in\R$, we set
\begin{equation}\label{aeprice}
 u(t,x,y;T,k)= v\left(t,e^x,y;T,e^k\right),\qquad 0\leq t\leq T,\ (x,y)\in \R\times \DomY,
\end{equation}
where $v$ is the pricing function in \eqref{eq:v}. Here, $x$ and $k$ are meant to represent the
spot log-price of the underlying asset and the log-strike of the option, respectively. Note that, the
function $u$ is well defined regardless of the process $S$ hitting zero or not.

After switching to log-variables, the generator $\Acc_{t}$ in \eqref{ae75} is transformed
into the second order operator
\begin{align}\label{operator_AA}
 \Ac:= \frac{1}{2}\sum_{i,j=1}^{d}a_{ij}(t,z)\p_{z_{i}z_{j}}+\sum_{i=1}^{d}a_{i}(t,z)\p_{z_{i}},\qquad t\in [0,T_{0}],\ z=(x,y)\in \R\times
 \DomY,
\end{align}
with
\begin{align}
  a_{11}(t,x,y)&=e^{-2x}\bar{a}_{11}\big(t,e^x,y\big),\qquad a_1 (t,x,y) = - \frac{e^{-2x}}{2}\bar{a}_{11}
  \big(t,e^x,y\big),
\intertext{and, for $i,j=2,\dots,d$,}
  a_{1i}(t,x,y)&=e^{-x}\bar{a}_{1i}\big(t,e^x,y\big),\qquad
  a_{ij}(t,x,y)=\bar{a}_{ij}\big(t,e^x,y\big), \qquad a_i (t,x,y) = \bar{a}_{i}
  \big(t,e^x,y\big).
\end{align}
For the reader's convenience, we also recall the classical definitions of Black\&Scholes price and
implied volatility given in terms of the spot log-price and the log-strike.
\begin{definition}
We denote by $u^{\BS}$ the \emph{Black\&Scholes price} function defined as
\begin{align}
u^{\BS}(\s;\t,x,k)
    &:= \ee^x \Nc(d_{+}) - \ee^k \Nc(d_{-}), &
d_{\pm}
    &:= \frac{1}{\sig \sqrt{\tau}} \( x - k \pm \frac{\sig^2 \tau}{2}  \),\qquad  x,k\in\R,\
    \s,\tau>0,
\end{align}
where $\Nc$ is the CDF of a standard normal random variable.
\end{definition}
\begin{definition}\label{def:BS-IV}
The \emph{implied volatility} $\s=\s(t,x,y;T,k)$ of the price $u(t,x,y;T,k)$ as in \eqref{aeprice} is
the unique positive solution of the equation
\begin{align}
 u^{\BS}(\sig;T-t,x,k)    &= u(t,x,y;T,k).   \label{eq:imp.vol.def}
\end{align}
\end{definition}
Note that Definition \ref{def:BS-IV} is well-posed because $C_{t,T,K}$ is a no-arbitrage price and
thus $u(t,x,y;T,k)$ belongs to the no-arbitrage interval $](e^{x}-e^{k})^+,e^x[$.

The computations in the following two subsections are meant to be formal and not rigorous. They
only serve the purpose to lead us through the definition of an approximating expansion for prices
and implied volatilities. The well-posedness of such definitions will be clarified, under rigorous
assumptions
in Section \ref{sec:error}.

\subsection{Price expansion}\label{sseca1}

We fix $\bar{z}=(\bar{x},\bar{y})\in\R \times \R^{d-1}$, such that
$\left(e^{\bar{x}},\bar{y}\right)\in D$ with $D$ as in Assumption \ref{assum1and}, and expand the
operator $\Ac_{t}$ by replacing the functions $a_{ij}(t,\cdot)$, $a_{i}(t,\cdot)$ with their
Taylor series around $\bar{z}$. We formally obtain
\begin{equation}
  \Ac_{t} =  \sum_{n=0}^\infty  \Ac^{(\bar{z})}_{t,n}
\end{equation}
where
\begin{equation}
 \Ac^{(\bar{z})}_{t,n}
 =\sum_{|\b|=n} \bigg(\sum_{i,j=1}^d
 \frac{D^{\b}a_{ij}(t,\bar{z})}{\b!}(z-\bar{z})^{\b} \partial_{z_i z_j}   +   \sum_{i=1}^d
 \frac{D^{\b}a_{i}(t,\bar{z})}{\b!}(z-\bar{z})^{\b} \partial_{z_i}   \bigg). \label{eq:A.expand}
\end{equation}
The intuitive idea underlying the following procedure is inspired by the fact that, typically, the pricing function
$u(\cdot,\cdot;T,k)$ solves the backward Cauchy problem
\begin{equation}\label{eqpde}
\begin{cases}
 (\p_t + \Ac_{t}) u(\cdot,\cdot;T,k)=  0, \qquad& \text{on }[0, T[\times\R\times \DomY,\\
  u(T,x,y;T,k)= \left(e^{x}-e^k\right)^{+}, & (x,y)\in \R\times \DomY.
\end{cases}
\end{equation}
Actually, \eqref{eqpde} holds automatically true if the operator $(\partial_t + \Ac_{t})$ is
uniformly parabolic and can be also proved to be satisfied, case by case, in many degenerate cases
of interest in mathematical finance, such as the CEV model. Nevertheless, the validity of
\eqref{eqpde} is not necessary for our analysis and {\it it is not required as an assumption}.

Next we assume that the pricing function $u$ can be expanded as
\begin{align}
  u  &=  \sum_{n=0}^\infty u^{(\bar{z})}_n. \label{eq:v.expand}
\end{align}
Inserting \eqref{eq:A.expand} and \eqref{eq:v.expand} into \eqref{eqpde} we find that the
functions $(u_{n}(\cdot,\cdot;T,k))_{n\geq 0}$ satisfy the following sequence of nested Cauchy
problems
\begin{align}\label{eq:v.0.pide}
&\begin{cases}
 (\partial_t + \Ac_{t,0} ) u^{(\bar{z})}_0(\cdot,\cdot;T,k) =  0, \qquad& \text{on }[0,T[\times\R^{d}, \\
 u^{(\bar{z})}_0(T,x,y;T,k) = \left(e^{x}-e^k\right)^{+},& (x,y) \in\R\times\R^{d-1},
\end{cases}
\intertext{and} \label{eq:v.n.pide} &\begin{cases}
 (\partial_t + \Ac_{t,0} ) u^{(\bar{z})}_n(\cdot,\cdot;T,k) =  - \sum\limits_{h=1}^{n} \Ac^{(\bar{z})}_{t,h} u^{(\bar{z})}_{n-h}(\cdot,\cdot;T,k), \qquad& \text{on }
 [0,T[\times\R^{d}, \\
 u^{(\bar{z})}_n(T,z;T,k) =  0, &    z \in\R^{d}.
\end{cases}
\end{align}
Note that, by Assumption \ref{assum1and}, $\Ac_{t,0}$ is an elliptic operator with time-dependent
coefficients and therefore
problem \eqref{eq:v.0.pide} can be solved to obtain
\begin{align}
{u^{(\bar{z})}_0(t,x,y;T,k)}
    &= u^\BS\big(\sig^{(\bar{z})}_0;T-t,x,k\big), \qquad \sig_0^{(\bar{z})}\equiv \sig_0^{(\bar{z})}(t,T)
    =  \sqrt{{\frac{1}{T-t}} \int_t^T a_{11}(\t,\bar{z}) d \t},
 \label{e10}
\end{align}
for any $t\in [0,T]$ and $(x,y)\in\R\times\R^{d-1}$. As for the $n$-th order correcting term
$u^{(\bar{z})}_n$, an explicit representation in terms of differential operators acting on
$u^{(\bar{z})}_0$ is available (see Theorem \ref{th:un_general_repres}).
\begin{definition}
\label{def:taylor} {For fixed maturity date $T$ and log-strike $k$,} we define the \emph{$N$-th
order approximations} of $u(\cdot,\cdot;T,k)$ as
\begin{align}
 \bar{u}_{N}(t,z;T,k)= \sum_{n=0}^N u^{({z})}_n(t,z;T,k),\qquad t\in\left[0,T\right],\ z\in \R\times \DomY, \label{eq:u.approx}
\end{align}
where the functions {$u^{(z)}_{n}$} are explicitly defined as in \eqref{e10}-\eqref{eq:un}.
\end{definition}
We recall that similar price expansions have been developed by \cite{bengobet2010},
\cite{TakahashiYamada2015} using Malliavin calculus techniques and by \cite{BayerLaurence} using
heat kernel methods.

\subsection{Implied volatility expansion}\label{sec:impvol}

We briefly recall how to derive a formal polynomial IV expansion from the price expansion
\eqref{eq:v.expand}-\eqref{eq:v.0.pide}-\eqref{eq:v.n.pide}. To ease notation, we will sometimes
suppress the dependence on $(t,x,y;T,k)$.
Consider the family of approximate Call prices indexed by $\del$
\begin{align}
 u^{(\bar{z})}(\delta)
     =  u^{\BS}\left(\sigma^{(\bar{z})}_0\right)+\sum_{n=1}^N \delta^n u^{(\bar{z})}_{n} +
     \delta^{N+1} \left(u-\sum_{n=0}^N  u^{(\bar{z})}_{n}\right), \qquad \del
    \in    [0,1], \label{eq:u.expand.again}
\end{align}
with $\sigma^{(\bar{z})}_0$ as in \eqref{e10} and the functions {$u^{(\bar{z})}_n$} as
in Subsection \ref{sseca1}.
Note that setting $\del=1$ yields the true pricing function $u$. Defining
\begin{align}
 g(\del) :=  (u^\BS)^{-1}(u(\del)),\qquad \del\in    [0,1], \label{eq:matt}
\end{align}
we seek the implied volatility $\sig=g(1)$. We will show in Section \ref{sec:error.impvol}, Lemma
\ref{lem:imp_vol_u_delta}, that under suitable assumptions $u(\delta)\in
](\ee^{x}-\ee^{k})^+,\ee^x[$ for any $\del\in[0,1]$. This
guarantees that $g(\del)$ in \eqref{eq:matt} is well defined. By expanding both sides of \eqref{eq:matt} as a Taylor series in $\del$, we see that $\sigma$
admits an expansion of the form
\begin{align}
 \sig=g(1)=  \sig_0 + \sum_{n=1}^\infty  \sig_{n},\qquad \sig_{n}=  \frac{1}{n!} \partial_{\delta}^{n} g(\delta) \vert_{\delta=0}
   . \label{ae6}
\end{align}
Note that, by \eqref{eq:u.expand.again} we also have
\begin{align}\label{ae8_bis}
  u_{n}=  \frac{1}{n!}\partial_{\delta}^{n}u^{\BS}(g(\delta))\vert_{\delta=0},\qquad 1\leq n \leq
  N,
\end{align}
and by applying the Faa di Bruno's formula (Proposition \ref{prop:multivariate_faa}), one can find
the recursive representation
\begin{equation}
\sig^{(\bar{z})}_n
    =  \frac{u^{(\bar{z})}_n}{\p_\sig u^\BS\big(\sig^{(\bar{z})}_0\big)}
            - \frac{1}{n!} \sum_{h=2}^{n}
       \mathbf{B}_{n,h}\(1!\,\sig^{(\bar{z})}_{1},2!\,\sig^{(\bar{z})}_{2},\dots,(n-h+1)!\, \sig^{(\bar{z})}_{n-h+1}\)
            \frac{\p_\sig^h u^\BS\big(\sig^{(\bar{z})}_0\big)}{\p_\sig u^\BS\big(\sig^{(\bar{z})}_0\big)}, \qquad
1
    \leq n \leq N, \label{eq:sig.n_bis}
\end{equation}
where $\mathbf{B}_{n,h}$ denote the so-called Bell polynomials. It was shown in \cite{LPP2} (see
also Proposition \ref{prop.un}) that each term $\sig^{(\bar{z})}_n$ is a polynomial in the
log-moneyness $(k-x)$. Moreover, if the coefficients of the model are time-independent, then the
expansion turns out to be also polynomial in time.
\begin{definition}
\label{def:taylor.sigma} {For a Call option with $\log$-strike $k$ and maturity $T$,} we define
the  \emph{$N$-th order approximation of the implied volatility} $\sigma(t,x,y;T,k)$ as
\begin{align}
\bar{\sigma}_N(t,x,y;T,k):=\sum_{n=0}^N {\sigma^{({x,y})}_n(t,x,y;T,k)}, \label{eq:sig.approx}
\end{align}
where {$\sigma^{({x,y})}_n$} are as defined in
\eqref{eq:sig.n_bis}.
\end{definition}
We recall that similar implied volatility expansions have been developed by \cite{BenArousLaurence}, \cite{DeuschelFriz}, \cite{forde-jacquier-lee}, and \cite{gatherallocal} among others.

\section{Error estimates for prices and sensitivities}
\label{sec:error}

In this section we derive error estimates for prices and sensitivities. Let us introduce the
following
\begin{notation}
For $z_{0}=(x_0,y_0)\in \DomXY$ and $0< r\leq +\infty$,
we set
 $$ D(z_{0},r)= B(x_0,r) \times B(y_0,r), $$
with $B(x_0,r)=\{x\in\R\mid |x-x_{0}|<r\}$ and $B(y_0,r)=\{y\in\R^{d-1}\mid |y-y_{0}|<r\}$.
Moreover, for $T\in ]0,T_{0}[$, we consider the cylinders $\Ho(T,z_{0},r), \Hb(T,z_{0},r)$ and the lateral boundary $\Sigma(T,z_{0},r)$
defined by
 \begin{align}
 \Ho(T,z_{0},r):= ]0,T[ \times D(z_0,r),\qquad \Hb(T,z_{0},r):= [0,T[ \times D(z_0,r),\qquad
 \Sigma(T,z_{0},r):= [0,T[ \times \p D(z_0,r),
\end{align}
respectively.
\end{notation}
Since we work with logarithmic variables, we are going to restate Assumption \ref{assum1and} in terms of conditions on the operator $\Ac_{t}$ as defined in \eqref{operator_AA}.
We recall that $N\geq 2$ is an integer constant that is fixed throughout the paper.
\begin{assumption}\label{assum2and2}
There exist $M_0>0$, $0<r\leq+\infty$
and $z_0=(x_0,y_0)\in\DomXY$
such that
the operator $\Ac_{t}$ as in \eqref{operator_AA} coincides with $\tilde{\Ac}_{t}$ on
$\Hb(T_{0},z_{0},r)$, where $\tilde{\Ac}_{t}$ is a differential operator of the form
\begin{align}\label{operator_AA_tilde}
 \tilde{\Ac}_{t}= \frac{1}{2}\sum_{i,j=1}^{d}\tilde{a}_{ij}(t,z)\p_{z_{i}z_{j}}+\sum_{i=1}^{d}\tilde{a}_{i}(t,z)\p_{z_{i}},\qquad t\in [0,T_{0}[,\ z\in \R^{d},
\end{align}
such that, for some $M\in ]0,M_{0}]$ and $\e\in ]0,1[$, we have:
\begin{enumerate}
\item[i)] {\it Regularity and boundedness:} \
 the coefficients  $\tilde{a}_{ij},\tilde{a}_{i}\in C^{N+1}_P\big([0,T_0[\times \R^{d}\big)$, with partial derivatives up to order $N+1$ bounded by $\ce$.
\item[ii)] {\it Uniform ellipticity:}
\begin{align}\label{cond:parabolicity}
 \eps \ce|\z|^2\le  \sum_{i,j=1}^{d}\tilde{a}_{ij}(t,z)\z_{i}\z_{j}\le \ce |\z|^2,\qquad t\in\left[0,T_{0}\right[,\
 z,\z\in\mathbb{R}^d.
\end{align}
 \end{enumerate}
\end{assumption}
Note that, if Assumption \ref{assum2and2} is satisfied with $r=+\infty$, then the operator
$\Ac_{t}$ is uniformly elliptic with bounded coefficients. {The forthcoming error bounds will be
asymptotic in the limit of small $M(T-t)$; in particular, the constant $C$ appearing in the error
estimates will be dependent on $M_0$ but not on $M$.}

Assumption \ref{assum2and2} is (locally) equivalent to Assumptions \ref{assum1and}. Precisely, the
former implies the latter on the domain $D= ]e^{x_0 - r},e^{x_0 + r}[ \times B(y_0,r) $.
Therefore, when Assumptions \ref{assum1i}, \ref{assum1ii} and \ref{assum2and2} are in force, in
light of Theorem \ref{la1} there exists a local transition density $\bar{\G}$ on $D$  for the
process $(S,Y)$. We then define the \emph{logarithmic local density} $\G$ as
\begin{equation}\label{ae40}
  \G(t,x,y;T,\x,\y)= e^{\x}\, \bar{\G}\big(t,e^{x},y;T,e^{\x},\y\big),
  \end{equation}
for any $(T,\xi,\y)\in  H(T_0,z_0,r)$ and $(t,x,y)\in \Hb(T,z_0,r)$.
\begin{remark}\label{ra7}
{Clearly Lemma \ref{la5} and Theorem \ref{la1}
can be extended to $\G$ through the logarithmic change of variables.} In particular, in this section we will use
that:
\begin{itemize}
\item[(i)]
$\G(t,z;\cdot,\cdot)\in C_P^{N,1}\big(]t,T_{0}[\times D(z_0,r)\big)$ for any $(t,z)\in
\Hb(T_0,z_0,r)$;
\item[(ii)]  $\G(\cdot,\cdot;T,\z)\in {C_P^{N+2,1}\big( \Hb(T,z_0,r) \big)}$ for any {$(T,\z)\in \Ho(T_{0},z_{0},r)$}
and solves the backward Kolmogorov equation
   \begin{equation}\label{eq:backward_Kolm_log}
   \left(\p_{t}+\Ac_{t}\right)
   f=0\qquad \text{on $\Hb(T,z_0,r)$.}
   \end{equation}
{Moreover, for any $(T,\bar{z})\in \Ho(T_{0},z_{0},r)$
and $\phi\in C_b\left(D(z_0,r)\right)$, we have
\begin{equation}
\lim_{(t,z)\to(T,\bar{z})\atop t<T}\int_{D(z_0,r)}\G(t,z;T,\zeta)\phi(\zeta)d\zeta=\phi(\bar{z});
\end{equation}
}
\item[(iii)] if $u$ is the function as defined in \eqref{aeprice}, then for any $T\in ]0,T_0[$ and $k\in\R$, we have that
$u(\cdot,\cdot;T,k)\in C^{N+2,1}_P\big(\Hb(T,z_0,r)\big)\cap C\big([0,T]\times D(z_0,r)\big)$ and
solves equation \eqref{eq:backward_Kolm_log}.
\end{itemize}
\end{remark}
Next we prove sharp error estimates for the derivatives $\partial_k^m (u-\bar{u}_N)$. In
Subsection \ref{sec:price.error} we prove some global bounds in the case $r=+\infty$ and then in
Subsection \ref{sec:price.error.local} we prove analogous local bounds in the general case
$r<+\infty$.

\subsection{Error estimates for uniformly parabolic equations}\label{sec:price.error}
Throughout this section we assume Assumption \ref{assum2and2} satisfied with $r=+\infty$. Under this assumption $u$ is the unique\footnote{The solution is unique within the class of non-rapidly increasing
functions.} classical solution of the Cauchy problem \eqref{eqpde} and can be represented as
\begin{align}\label{eq:convolution_u}
u(t,z)=\int_{\mathbb{R}^d} \Gamma(t,z;T,\xi,\eta) \big(e^{\x}-e^{k}\big)^+ d \xi d\eta,\qquad
{t\in\left[0,T\right[},\ z\in\mathbb{R}^d,
\end{align}
{where $\Gamma $ is the fundamental solution of the uniformly parabolic operator { $(\partial_t +
\Ac_{t})$}.} In the following statement $\bar{u}_{N}$ is the $N$th order approximation of $u$ as
defined in \eqref{eq:u.approx}.
\begin{theorem}\label{th:error_estimates_taylor}
Let Assumptions \ref{assum1i}, \ref{assum1ii} and \ref{assum2and2} hold with $r=+\infty$. Then,
for any {$m,q\in\N_0$ with $m+2q\leq N$}, we have
\begin{equation}\label{eq:error_estimate}
 \big|{\partial^q_T} \partial^m_k \big( u- \bar{u}_{N}\big)(t,x,y;T,k) \big| \leq C e^x {M^{q}} \left(\ce({T-t})\right)^{\frac{{N-m{- 2q}+2}}{2}},
\end{equation}
for $0\leq t<T< T_0,$ $x,k\in\mathbb{R}$ and $y\in\R^{d-1}$. The constant $C$ in
\eqref{eq:error_estimate} depends only on $T_{0},\cem,\e,N$ and the dimension $d$. In particular,
$C$ is independent of $M$.
\end{theorem}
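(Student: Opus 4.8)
\medskip\noindent\emph{Sketch of the proof.}\ \ The plan is to fix the expansion centre $\bar{z}=(\bar{x},\bar{y})$, to set up a Cauchy problem for the remainder $R_N:=u-\bar{u}_{N}$, to represent it by Duhamel's principle against the fundamental solution $\Gamma$ of the uniformly parabolic operator $\partial_t+\Ac_t$, and finally to estimate the resulting space--time integral; only at the very end is the evaluation point set equal to $\bar{z}$. To obtain the residual equation, first write, by Taylor's formula, $\Ac_{t}=\sum_{n=0}^{N}\Ac^{(\bar{z})}_{t,n}+\mathcal{R}^{(\bar{z})}_{t,N}$, where the remainder operator $\mathcal{R}^{(\bar{z})}_{t,N}$ has $C^{N+1}_{P}$ coefficients that vanish to order $N+1$ at $\bar{z}$ and are of size $O(M)$ (here Assumption \ref{assum2and2}(i) is used). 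Since $u$ is the classical solution of \eqref{eqpde}, given by \eqref{eq:convolution_u}, and the $u^{(\bar{z})}_{n}$ solve the nested problems \eqref{eq:v.0.pide}--\eqref{eq:v.n.pide}, a direct computation gives
\begin{equation*}
 (\partial_t+\Ac_{t})R_N=F_N\quad\text{on }[0,T[\times\R^{d},\qquad R_N(T,\cdot)=0,
\end{equation*}
where $F_N$ is an explicit finite sum of terms $\Ac^{(\bar{z})}_{t,j}u^{(\bar{z})}_{m}$ with $1\le j\le N$ and $j+m>N$, plus the single term $\mathcal{R}^{(\bar{z})}_{t,N}\bar{u}_{N}$. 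The crucial structural point is that, in the parabolic homogeneous scaling (a monomial in $z-\bar{z}$ has weight $1$, $\partial_{z}$ weight $-1$, $T-t$ weight $2$), \emph{every} summand of $F_N$ has weight at least $N-1$; the explicit representation of $u^{(\bar{z})}_{n}$ as a differential operator acting on $u^{(\bar{z})}_{0}$ (Theorem \ref{th:un_general_repres}) together with the Fa\`a di Bruno combinatorics (Proposition \ref{prop:multivariate_faa}) make this bookkeeping precise.

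\medskip
By the uniqueness of \eqref{eqpde} in the class of non-rapidly increasing functions, $R_N$ is then represented as
\begin{equation*}
 R_N(t,z;T,k)=-\int_{t}^{T}\!\int_{\R^{d}}\Gamma(t,z;s,\zeta)\,F_N(s,\zeta;T,k)\,d\zeta\,ds .
\end{equation*}
Because $\Gamma$ depends on neither $T$ nor $k$, the strike derivatives $\partial_k^{m}$ move directly onto $F_N$. The maturity derivatives $\partial_T^{q}$ are obtained by differentiating under the integral sign, which produces a finite sum of boundary contributions coming from the endpoint $s=T$ --- where, since $u^{(\bar{z})}_{m}(T,\cdot)=0$ for $m\ge1$, only $\mathcal{R}^{(\bar{z})}_{T,N}$ (or its $T$-derivatives) acting on the payoff $(e^{x}-e^{k})^{+}$ survives, and this, via the martingale identity $a_1=-\tfrac12 a_{11}$ which the Taylor remainders inherit, reduces to $\Gamma$ (and its $\Ac^{*}$-derivatives) evaluated at the strike against high-order Taylor-remainder coefficients, a Dupire/Breeden--Litzenberger type structure --- together with the bulk term $-\int_t^T\!\int_{\R^d}\Gamma\,\partial_T^{q}F_N\,d\zeta\,ds$, whose integrand consists of Black--Scholes Greeks times polynomials in $\zeta-\bar{z}$, with at most an algebraic blow-up as $s\uparrow T$.

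\medskip
The estimates rest on three ingredients. The first is the Gaussian upper bound $\Gamma(t,z;s,\zeta)\le C(M(s-t))^{-d/2}\exp\!\big(-c|\zeta-z|^2/(M(s-t))\big)$ (and its analogues for the space derivatives of $\Gamma$), with $C,c$ depending only on $d$, $\eps$ and the $C^{N+1}$-bound --- hence only on $M_0$, not on $M$. The second is the identity $\Ac_{t}e^{x}=0$, which forces $\int_{\R^d}\Gamma(t,z;s,\zeta)e^{\xi}d\zeta=e^{x}$ and therefore, together with the Gaussian bound, $\int_{\R^d}\Gamma(t,z;s,\zeta)e^{\xi}|\zeta-\bar{z}|^{p}d\zeta\le C e^{x}\big(|z-\bar{z}|+\sqrt{M(s-t)}\big)^{p}$. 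The third is the catalogue of short-time/small-volatility estimates for $u^{\BS}$ and its $k$- and $T$-derivatives (Appendix \ref{app:BS_greeks}), which quantify the blow-up of the Greeks as the time to maturity vanishes. Feeding these into the Duhamel formula, every summand becomes a space integral of order $e^{x}M^{\ell}(M(s-t))^{w/2}(M(T-s))^{-\theta}$, where $w$ is the weight of the term, $2\theta$ the order of the Greek singularity and $\ell$ the number of second-order factors; the ensuing time integral $\int_t^T(s-t)^{w/2}(T-s)^{-\theta}\,ds$ is a Beta integral equal to a constant times $(T-t)^{w/2+1-\theta}$, so each Greek singularity is repaid, with interest, by one additional half power of $T-t$. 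Summing the finitely many terms, setting $z=\bar{z}=(x,y)$, and subtracting $m$ (one per $\partial_k$) and $2q$ (two per $\partial_T$) from the weight gives exactly $|\partial_T^{q}\partial_k^{m}(u-\bar{u}_{N})|\le C e^{x}M^{q}(M(T-t))^{(N-m-2q+2)/2}$. The uniformity in $M$ is obtained by rescaling time by $M$, after which $M(T-t)$ is the only small parameter and all constants depend on $M$ only through the bound $M_0$; the hypothesis $m+2q\le N$ is precisely what keeps the Beta exponents admissible (indeed the final exponent satisfies $(N-m-2q+2)/2\ge1$).

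\medskip
The main obstacle is the maturity derivatives. Unlike $\partial_k$, which commutes with $\partial_t+\Ac_t$, the operator $\partial_T$ both acts on the non-smooth terminal datum and occurs inside $F_N$, so iterating it generates a whole hierarchy of boundary terms --- fundamental solution (and its forward derivatives) evaluated near the strike against high-order Taylor remainders --- and progressively more singular Greeks in the bulk; verifying that every one of these still decays at the sharp rate, and arranging the (otherwise routine but voluminous) algebra that defines $F_N$ so that each term is manifestly of weight $\ge N-1$, is where essentially all of the work lies.
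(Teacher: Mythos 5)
Your plan follows essentially the same route as the paper: a Duhamel representation of the residual $R_N=u-\bar u_N$ against the fundamental solution $\Gamma$ of the uniformly parabolic $(\partial_t+\Ac_t)$, the explicit differential-operator structure of the correctors $u_n^{(\bar z)}$ (Theorem~\ref{th:un_general_repres}/Corollary~\ref{lem:ste10}), Gaussian upper bounds for $\Gamma$ (Lemma~\ref{lem:gaussian_estimates}), the short-time Black--Scholes Greek estimates of Appendix~\ref{app:BS_greeks}, and a Beta-type time integration; and you correctly single out the $\partial_T^q$ boundary terms as the hard part. Your algebra identifying $F_N$ as a sum of $\Ac^{(\bar z)}_{t,j}u^{(\bar z)}_m$ with $j+m>N$ plus the Taylor-remainder operator applied to $\bar u_N$ is exactly the content (modulo sign) of the identity \eqref{eq:ste30}, i.e.\ Lemma 6.23 in \cite{LPP4}.

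However, the step you wave away as ``routine but voluminous'' hides the point where the paper's proof actually earns its keep, and as written your estimate does not go through. You apply $\partial_k^m$ directly to $F_N$ and then estimate the resulting Greeks, arguing that ``each Greek singularity is repaid, with interest, by one additional half power of $T-t$.'' This tacitly assumes both Beta exponents stay strictly above $-1$, which is not automatic: the representation of $\partial_k^m D^\alpha u^{(\bar z)}_{N-n}$ furnished by Corollary~\ref{lem:ste10} carries a Greek $\partial_{\zeta_1}^{j+m+\alpha_1}u_0^{(\bar z)}$, and even after the compensating decay of the coefficient $f^{(N-n,0,m,\alpha)}_{\gamma,j}$ is accounted for, the singularity at $s\uparrow T$ scales like $(M(T-s))^{(N-n-|\gamma|+1-m-\alpha_1)/2}$, which for $|\gamma|$ close to $N-n$ and $\alpha_1=2$ is non-integrable already at $m=1$. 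The paper resolves this by the integration-by-parts manoeuvre of \eqref{eq:ste47}--\eqref{eq:ste46}: it shifts all $m$ strike-derivatives off the Greek factor and onto $\Gamma\cdot\big(a_\alpha-\mathbb{T}^{a_\alpha}_{z,n}\big)(\zeta-z)^\gamma$, converting a $(T-s)$-singularity into an $(s-t)$-contribution that is then absorbed by the vanishing of the Taylor remainder (estimate~\eqref{eq:ste49}) and the Gaussian moment bound (Lemma~\ref{lemm:homogeneous_gaussian_estimate}). Without this redistribution --- or some equivalent mechanism --- the ``weight $\geq N-1$'' bookkeeping does not by itself ensure an admissible Beta integral, so a key idea is missing from your sketch. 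Similarly, for the maturity derivatives your description of the boundary terms is only qualitative; the paper proves that all boundary contributions at $s=T$ with $n\le N-1$ vanish (owing to the extra factor $\sqrt{M(T-s)}$), and only the $n=N$ term survives as the explicit Breeden--Litzenberger-type expression \eqref{eq:ste_limit_bis}, a computation that still needs to be carried out to get the stated uniformity in $M$ and the sharp exponent $\tfrac{N-m-2q+2}{2}$.
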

The proof of Theorem \ref{th:error_estimates_taylor}, which is postponed to Appendix
\ref{appth44}, is based on the following classical Gaussian estimates (see, for instance Chapter 1
in \cite{friedman-parabolic}, Corollary 5.5 in \cite{pascucci-parametrix} and
\cite{pascuccibook}).
\begin{lemma}\label{lem:gaussian_estimates}
Let $\G=\G(t,z;T,\zeta)$ be the fundamental solution of $(\Ac_{t}+\p_{t})$. Then, for any $c>1$,
{$q\in\N_0$} and $\b,\g\in\mathbb{N}_{0}^{d}$ with {$|\b|+2 q\le N$}, we have
\begin{align}\label{Gaua}
 \big|(z-\z)^{\g}{\partial_T^q D^{\b}_{\z}}\G(t,z;T,\zeta)\big|\le C {M^{q}} \left(M(T-t)\right)^{\frac{{|\g|-|\b| -2 q}}{2}}\Gbs^{
 }{\left(c\ce(T-t),z-\zeta\right) },\qquad 0\leq t<T \leq T_{0},\ z,\zeta\in\R^{d},
\end{align}
where $\Gbs$ is the $d$-dimensional standard Gaussian function
\begin{equation}\label{eq:normal_density_d}
 \Gbs(t,z) = (2\pi t)^{-\frac{d}{2}}\exp\left(  - \frac{|z|^2}{2 t} \right),\qquad  t\in\R_{>0},\ z\in\R^d,
\end{equation}
and {$C$ is a positive constant that depends only on $c,T_{0},\cem,\varepsilon,N$ and the
dimension $d$.}
\end{lemma}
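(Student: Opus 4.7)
The plan is to package three standard ingredients from parabolic PDE theory: Aronson-type Gaussian bounds for spatial derivatives, polynomial absorption into the Gaussian kernel, and conversion of time derivatives into spatial derivatives via the forward Kolmogorov equation. The key bookkeeping concern is to track the ellipticity constant $M$ explicitly so that all remaining constants depend only on $M_{0}$.

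First, I would invoke the classical Gaussian estimates for $(\partial_{t}+\tilde{\Ac}_{t})$ as derived in Chapter~1 of \cite{friedman-parabolic} (see also Corollary~5.5 of \cite{pascucci-parametrix}): under Assumption~\ref{assum2and2} with $r=+\infty$, for every $|\b|\le N$,
\[
\bigl|D_{\z}^{\b}\G(t,z;T,\z)\bigr|\le C\bigl(M(T-t)\bigr)^{-|\b|/2}\Gbs\bigl(cM(T-t),z-\z\bigr).
\]
The explicit $M$-scaling inside the exponential and prefactor is obtained by the time change $\t=Mt$, which turns $\tilde{\Ac}_{t}/M$ into a uniformly parabolic operator with ellipticity between $\e$ and $1$ and coefficient derivatives bounded purely in terms of $M_{0}$; this pulls out the correct powers of $M$.

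Second, for the polynomial weight I would use the elementary inequality, valid for any $c'>c$ and $|\g|\le N$,
\[
|z-\z|^{|\g|}\exp\!\Bigl(-\tfrac{|z-\z|^{2}}{2cM(T-t)}\Bigr)\le C_{c,c',\g}\bigl(M(T-t)\bigr)^{|\g|/2}\exp\!\Bigl(-\tfrac{|z-\z|^{2}}{2c'M(T-t)}\Bigr),
\]
which absorbs $(z-\z)^{\g}$ into a slightly enlarged Gaussian. Combined with the first step this settles the case $q=0$.

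Third, for the time derivatives I would exploit the forward Kolmogorov equation $\partial_{T}\G=\tilde{\Ac}_{T}^{*}\G$ in the variables $(T,\z)$. Iterating $q$ times and applying Leibniz's rule, $\partial_{T}^{q}\G$ can be written as a finite linear combination of terms $f_{\a}(T,\z)\,D_{\z}^{\a}\G$ with $|\a|\le 2q$, where each $f_{\a}$ is a polynomial in the coefficients $\tilde{a}_{ij},\tilde{a}_{i}$ and their derivatives up to order $2q-|\a|$. Since $2q\le N$, all such coefficients are controlled by Assumption~\ref{assum2and2}-(i), and each of the $q$ applications of $\tilde{\Ac}_{T}^{*}$ contributes exactly one factor of $M$ (from the $O(M)$ bound on the leading coefficients), giving $|f_{\a}|\le CM^{q}$. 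Applying the spatial estimate of the first step to each $D_{\z}^{\a+\b}\G$ (with total order $|\a+\b|\le|\b|+2q\le N$, within the admissible range) and then absorbing $(z-\z)^{\g}$ via the second step yields the advertised bound, since the exponents combine as $-|\b|/2-q+|\g|/2=(|\g|-|\b|-2q)/2$.

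The main obstacle is purely a bookkeeping one: verifying that the powers of $M$ balance correctly so that the final constant $C$ depends only on $c,T_{0},\cem,\e,N,d$ and not on $M$. Once one checks that every $\partial_{T}$ contributes a net factor of $M$ together with two extra spatial derivatives, the exponent on $M(T-t)$ is forced. No genuinely new estimate beyond classical Aronson/Friedman-Schauder theory is required.
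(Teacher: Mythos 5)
The paper does not give a proof of this lemma; it is invoked as a classical result with citations to Chapter~1 of \cite{friedman-parabolic}, Corollary~5.5 of \cite{pascucci-parametrix}, and \cite{pascuccibook}, and no in-paper argument exists to compare against. Your proposal supplies exactly the derivation the paper delegates to the literature, and it assembles the right ingredients in the right order: classical Gaussian bounds for spatial derivatives $D^\b_\z\G$; the time change $\t=\ce t$ to isolate the $\ce$-dependence so that the residual constant depends only on $\e,\cem,N,d,T_0$; polynomial absorption into an enlarged Gaussian (Lemma~\ref{lemm:homogeneous_gaussian_estimate} applied componentwise); and the forward Kolmogorov equation $\p_T\G=\tilde{\Ac}^*_T\G$ to trade each $\p_T$ for at most two extra $\z$-derivatives and an explicit coefficient factor.

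One bookkeeping point you leave implicit deserves a sentence. When you iterate $\p_T$, some of the time derivatives land on the coefficients rather than on $\G$, which produces terms of the form $(\p_T^{j}\tilde{a})\,D^\a_\z\G$ in which both the number of coefficient factors $p$ and the spatial order $|\a|$ are strictly less than $q$ and $2q$, respectively. Such a term is bounded by $C\,\ce^{p}\left(\ce(T-t)\right)^{-|\a|/2}\Gbs$, whereas the target is $C\,\ce^{q}\left(\ce(T-t)\right)^{-q}\Gbs$; the two differ by a factor $\ce^{p-|\a|/2}(T-t)^{q-|\a|/2}$, and since $|\a|\le 2p\le 2q$, $\ce\le\cem$ and $T-t\le T_0$, this extra factor is bounded by a constant depending only on $\cem$, $T_0$ and $N$. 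With that absorption made explicit the argument is complete, and it is the natural (and almost certainly the intended) route behind the paper's citation.
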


\subsection{Error estimates for locally parabolic equations}\label{sec:price.error.local}

We now relax the global parabolicity assumption of Subsection \ref{sec:price.error}, by assuming that the
pricing operator $\Ac_{t}$ is only {\it locally elliptic}: precisely, throughout this section we
impose that Assumptions \ref{assum1i}, \ref{assum1ii} and \ref{assum2and2} hold for some $r>0$. We
first state the result in the one-dimensional case.
\begin{theorem}\label{t2a}
Let $d=1$. Under Assumptions \ref{assum1i}, \ref{assum1ii} and \ref{assum2and2},
for any $\d\in]0,1[$, $T\in]0,T_{0}[$ and $m\le N$ we have
\begin{equation}\label{e2}
  \left|\partial^m_k u(t,z;T,k)-\partial^m_k \bar{u}_{N}(t,z;T,k)\right|
  \le C (M(T-t))^{\frac{{N-m+2}}{2}},\qquad (t,z)\in \Hb(T,z_{0},\d r),\ |k-x_{0}|<\d r,
  \end{equation}
where $C$ is a  positive constant that depends only
on $r,z_{0},\d,d, M_0,\e,N$ and $T_0$. In particular, $C$ is independent of $M$.
\end{theorem}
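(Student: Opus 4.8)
The plan is to compare $u$ with the Call price $\tilde u(\cdot,\cdot;T,k)$ of the auxiliary, \emph{globally} uniformly parabolic model whose generator is the prolongation $\tilde\Ac_t$ of $\Ac_t$ supplied by Assumption \ref{assum2and2} (here $d=1$, so the state space is $\R$ and $z=x$). Since $\Ac_t$ and $\tilde\Ac_t$ coincide on the whole cylinder $\Hb(T_0,z_0,r)$, for every $z\in D(z_0,r)$ the two operators have the same Taylor coefficients at $z$ up to order $N$; hence the $N$-th order proxy $\bar u_N(t,z;T,k)$ of Definition \ref{def:taylor}, which depends only on those coefficients, is the same whether it is built from $\Ac_t$ or from $\tilde\Ac_t$. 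Applying Theorem \ref{th:error_estimates_taylor} to $\tilde\Ac_t$ thus gives, on all of $[0,T[\times\R$,
\begin{equation*}
 \bigl|\partial_k^m\bigl(\tilde u-\bar u_N\bigr)(t,z;T,k)\bigr|\le C\,e^{x}\,(M(T-t))^{\frac{N-m+2}{2}},\qquad m\le N,
\end{equation*}
where, since $|x-x_0|<\delta r$, the factor $e^x$ is harmless. By the triangle inequality, \eqref{e2} follows once we prove
\begin{equation}\label{eq:loc_diff}
 \bigl|\partial_k^m\bigl(u-\tilde u\bigr)(t,z;T,k)\bigr|\le C\,(M(T-t))^{\frac{N-m+2}{2}}\qquad\text{on }\Hb(T,z_0,\delta r),\ |k-x_0|<\delta r.
\end{equation}

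Both $u(\cdot,\cdot;T,k)$ and $\tilde u(\cdot,\cdot;T,k)$ solve the backward Kolmogorov equation $(\partial_t+\Ac_t)f=0$ on $\Hb(T,z_0,r)$ — for $u$ by Remark \ref{ra7}(iii), for $\tilde u$ by classical theory, using $\Ac_t=\tilde\Ac_t$ there — and, since $k$ is a parameter not entering $\Ac_t$, the same holds for $w_m:=\partial_k^m(u-\tilde u)$; moreover $w_m$ vanishes on $\{T\}\times D(z_0,r)$ because $u$ and $\tilde u$ share the terminal datum $(e^x-e^k)^+$. Localizing the $\F^t$-martingale \eqref{ae41} with $f=w_m$ and applying the optional sampling theorem at $\tau_0:=T\wedge\tau_1$, with $\tau_1$ the first exit time after $t$ of $Z$ from $D(z_0,r)$, yields
\begin{equation*}
 w_m(t,z)=E_{t,z}\bigl[\mathbf{1}_{\{\tau_1<T\}}\,w_m(\tau_1,Z_{\tau_1})\bigr].
\end{equation*}
Inside $D(z_0,r)$ the process $Z$ has the law of the diffusion generated by the uniformly elliptic $\tilde\Ac_t$, so a Gaussian-tail (Bernstein-type) estimate gives $P_{t,z}(\tau_1<T)\le C\exp\bigl(-c(1-\delta)^2r^2/(M(T-t))\bigr)$ for $z\in D(z_0,\delta r)$, with $c,C$ independent of $M$.

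If $m\le 1$ the lateral values of $w_m$ are bounded — indeed $\partial_k u=-e^kP_{t,z}(S_T>e^k)$ and likewise for $\tilde u$, the relevant strikes $e^k$ lying in a compact subinterval of the price range of $D$ where $Z$ has a genuine density by Theorem \ref{la1} — so the last two displays give $|w_m|\le C\exp(-c/(M(T-t)))$ on $\Hb(T,z_0,\delta r)$. For $m\ge 2$ the derivatives $\partial_k^m$ of the Call prices involve derivatives of the law of $S_T$ and may blow up, like $(M(T-\tau_1))^{-(m-1)/2}$, as $\tau_1\uparrow T$; the event on which this is felt is still exponentially improbable, but the naive estimate of the expectation above ceases to be integrable in time once $m\ge 3$. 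This is the crux, and I would resolve it with Safonov's technique: interpolate between $D(z_0,\delta r)$ and $D(z_0,r)$ by a finite chain of nested cylinders and iterate the exit-time representation, removing one order of the boundary singularity at each step while spending only a fixed fraction of the remaining time-to-maturity, so that after finitely many steps the surviving lateral data are bounded and the compounded exit probabilities remain exponentially small in $1/(M(T-t))$. The outcome is $|w_m(t,z)|\le C\exp(-c/(M(T-t)))$ on $\Hb(T,z_0,\delta r)$, with $C,c$ depending only on $r,z_0,\delta,d,M_0,\e,N,T_0$.

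Finally, since $M(T-t)\le M_0T_0$ and $\sup_{0<\rho\le M_0T_0}\rho^{-(N-m+2)/2}e^{-c/\rho}<\infty$ for every $m\le N$ (as $(N-m+2)/2\ge 1>0$), the exponential bound upgrades to \eqref{eq:loc_diff}, whence \eqref{e2}. The genuine difficulties are concentrated in the $m\ge 2$ step: making the Safonov iteration quantitative enough that no constant depends on the ellipticity scale $M$, and controlling $w_m$ as $t\uparrow T$ — where $\partial_k^m u$ and $\partial_k^m\tilde u$ are individually singular on the diagonal, yet their difference is not — so that the optional-sampling identity above is justified up to the terminal time.
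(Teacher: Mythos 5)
Your overall architecture matches the paper's: write $u-\bar u_N = (u-\tilde u) + (\tilde u - \bar u_N)$, handle the second piece with Theorem \ref{th:error_estimates_taylor} applied to the globally parabolic extension $\tilde\Ac_t$ (noting $\bar u_N$ is the same for $\Ac_t$ and $\tilde\Ac_t$ since they share Taylor data in the cylinder), and show that $\partial_k^m(u-\tilde u)$ decays faster than any polynomial in $M(T-t)$. The paper deliberately omits the $d=1$ proof, calling it a ``simpler modification'' of the proof of Theorem \ref{t2}, and for $m\leq 1$ your probabilistic optional-sampling route is a legitimate substitute for the paper's PDE machinery (Poisson kernel representation plus maximum principle with the Safonov barrier $v$ of \eqref{e5}--\eqref{e6}).

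For $m\ge 2$, however, there is a real gap, and the difficulty you diagnose is not the one the paper has to overcome. You claim the lateral data $w_m(\tau_1,Z_{\tau_1})$ ``may blow up, like $(M(T-\tau_1))^{-(m-1)/2}$, as $\tau_1\uparrow T$'' and propose a Safonov-style iteration to remove the singularity one order at a time. But when the cylinders are properly nested (as the paper does with $\delta^4 r < \delta^3 r < \cdots < r$), the exit point $Z_{\tau_1}\in\partial D(z_0,\rho r)$ lies at a fixed positive distance from the strike $k$, and derivatives of the local density $\Gamma(\tau_1,\cdot\,;T,k)$ evaluated \emph{off the diagonal} do not exhibit the $(T-\tau_1)^{-(m-1)/2}$ diagonal blowup — they are bounded (indeed exponentially small). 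So the integrability problem you flag is an artifact of having placed $k$ too close to the exit boundary, not an intrinsic feature; your proposed iteration attacks a phantom obstacle. What actually has to be shown is that $w_{q,m}$ is bounded on the lateral boundary of an intermediate cylinder, and the paper does this directly from the representation \eqref{ae20}, \eqref{ae90}, \eqref{ae91} (which for $d=1$ simplifies: the tail-in-$\eta$ term $U_{4,q,m,\delta}$ vanishes) combined with Lemma \ref{lemm:ste_dens}. Safonov's technique is used in the paper only for the exponential barrier $v$ in Step 2, not for establishing boundedness.

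Separately, and you acknowledge this in your closing paragraph, your optional-sampling identity $w_m(t,z)=E_{t,z}[\mathbf 1_{\{\tau_1<T\}}w_m(\tau_1,Z_{\tau_1})]$ presumes $w_m(T,\cdot)=0$ on the base of the cylinder, which is exactly as delicate as the boundary bound: individually $\partial_k^m u$ and $\partial_k^m\tilde u$ are singular on the diagonal as $t\uparrow T$. The paper establishes this limit via Lemma \ref{lemma:zerolimit} (a Poisson-kernel representation on a slightly smaller cylinder, with an induction in the number of $T$-derivatives), and the argument feeds on precisely the lateral bound you did not prove. So both of the obstructions you identify are genuine, but they must be resolved \emph{together} through the lateral bound, and neither your Bernstein estimate nor your sketched iteration supplies it. Until that step is supplied, the $m\ge 2$ case of \eqref{e2} is not proved.
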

The proof of Theorem \ref{t2a} is a simpler modification of that of Theorem \ref{t2} below, and therefore will be
omitted. Theorem \ref{t2} is the main result of this section: it gives estimates for the
derivatives of the price function w.r.t. the log-strike $k$ in dimension $d\ge 2$.

\medskip For the rest of the section we fix $\hat{N}\in\N_{0}$, with $\hat{N}\le N$, and consider
$d\ge 2$.  By our general assumptions (see, in particular, Remark \ref{ra7}) we have that, for any
$T\in ]0,T_0[$, $(t,z)\in \Hb(T,z_0,r)$, {$|k-x_{0}|<r$} and $\d\in[0,1]$, the pricing function
$u$ can be represented as
\begin{equation}\label{ae20}
 u(t,z;T,k)=I_{1,\d}(t,z;T,k)+I_{2,\d}(t,z;T,k),
 \end{equation}
where
\begin{align}
 I_{1,\d}(t,z;T,k)&=\int_{D(z_0,\d r)}\left(e^{\x}-e^{k}\right)^+\G(t,z;T,\x,\y)d \x d \y,\\
 I_{2,\d}(t,z;T,k)&=\int_{\R^d\setminus D(z_0,\d
 r)}\left(e^{\x}-e^{k}\right)^+ p(t,z;T,d\x,d\y),\label{ae89}
\end{align}
and $p$ denotes the transition {distribution} 
of the process $(\log S,Y)$. We
note explicitly that, even if $\log S$ takes value in $[-\infty,+\infty[$ (due to the possibility
for $S$ to reach $0$),
 we can exclude $\{-\infty\}\times\R^{d-1}$ from the domain of integration
of $I_{2,\d}$ because the Call payoff function
is null for $\x\le k$.

{Formula \eqref{ae20} is useful to study the regularity properties of $u$ w.r.t. $k$ {and $T$}. In
fact, by (i) of Remark \ref{ra7}, $I_{1,\d}$ is {twice differentiable in $k$, with $\partial^2_k
I_{1,\d}(t,z;\cdot,\cdot)\in C^{N}_P\big(]t,T_{0}[\times D(z_0,r)\big)$,} and we have
\begin{align}\label{ae90}
 {\p_{T}^{q}}\p_{k}^{m} I_{1,\d}(t,z;T,k) &=U_{{1,q,m,\d}}(t,z;T,k)+U_{{2,q,m,\d}}(t,z;T,k),\\
\intertext{where}
 U_{{1,q,m,\d}}(t,z;T,k)&=e^k \int_k^{x_0+\d r} \int_{|\y-y_0|<\d r} {\p_{T}^{q}} \G(t,z;T,\xi,\y) d \xi  d  \y,\\
 U_{{2,q,m,\d}}(t,z;T,k)&= e^{k} \sum_{j=1}^{m-1} \binom{m-1}{j} \int_{|\y-y_0|<\d r} {\p_{T}^{q}} \partial_{k}^{j-1}  \G(t,z;T,k,\y) d \y,
\end{align}
for $(t,z)\in \Hb(T,z_0,r)$ and $k\in B(x_0,\d r)$. However, the assumptions imposed in Section
\ref{sec:model} are not sufficient to ensure the existence of the derivatives $
{\p_{T}^{q}}\partial^m_k I_{2,\d}$ (and consequently of $ {\p_{T}^{q}}\partial^m_k u$). Indeed, a
formal computation gives
\begin{align}\label{ae91}
 {\p_{T}^{q}} \p_{k}^{m} I_{2,\d}(t,z;T,k) &=U_{{3,q,m,\d}}(t,z;T,k)+U_{{4,q,m,\d}}(t,z;T,k),\\
\intertext{where}
 U_{{3,q,m,\d}}(t,z;T,k)&= {\p_{T}^{q}}e^k \int_{[x_0+\d r,+\infty[ \times \DomY}  p(t,z;T,d\xi,d\eta),\\ \label{eq:ste132}
 U_{{4,q,m,\d}}(t,z;T,k)&= {\p_{T}^{q}}\partial_{k}^{m} \int_{]k,x_0+\d r[ \times (\DomY\setminus B(y_{0},\d r))
 } p(t,z;T,d\xi,d\eta) \left(e^{\x}-e^{k}\right).
\end{align}
Now, it is clear that {$U_{3,q,m,\d}$} depends smoothly on $k$. On the contrary, the existence and
boundedness properties of the derivatives {$U_{4,q,m,\d}$} depend on the tails of the distribution
and cannot be deduced from the general assumptions of Section \ref{sec:model} because of the {\it
local nature} of such assumptions. Notice that this problem only arises when $d\ge 2$ and
therefore, in order to prove results in the most general setting, we need to impose the following
additional
\begin{assumption}\label{exder}
{For any 
$(t,z)\in \Hb(T_0,z_0,r)$, the function $u(t,z;\cdot,\cdot)\in C^{\hat{N}}_P\big(]t,T_{0}[\times D(z_0,r)\big)$.}
Moreover,
in the case $\hat{N}\geq 2$, there exist $\d\in]0,1[$ and
some positive constants $\Cdt $ and $\Cdb $ such that
\begin{align}\label{ae28}
 &\left|{\p_{T}^{q}\p_{k}^{m}}\Gamma(t,z;T,k,\y)\right|\le \Cdt,\qquad {2q + m\le \hat{N}},
 \intertext{for any $(T,k,\y)\in \Ho(T_{0},z_0,\d^2 r)$, $(t,z)\in \Hb(T,z_0,r)\setminus
\Hb(T,z_0,{\d} r)$, and}
 \label{se24}
 &{\left| U_{3,q,m,\d^2}(t,z;T,k)\right| +} \left| U_{{4,q,m,\d^2}}(t,z;T,k)\right| \le \Cdb,\qquad {2q+m}\le \hat{N},
\end{align}
for any $ (T,k)\in ]0, T_{0}[\times B(x_0,\d^2 r)$ and $(t,z)\in \Hb(T,z_{0},\d^3 r)$.
\end{assumption}
\begin{remark}If $\log S_T$ (or, equivalently, $S_{T}$) has a {\it marginal} local density
${\G}_{S}(t,z;T,k)$ such that
  $$ \p_{T}^{q}\p_{k}^{m}{\G}_{S}(t,z;\cdot,\cdot)\in C\big(]t,T_{0}[\times B(x_0,r)\big),\qquad 2q+m\leq \hat{N},$$
then the first part of Assumption \ref{exder} is satisfied: in fact, $u(t,z;\cdot,\cdot)\in
C^{\hat{N}}_P\big(]t,T_{0}[\times B(x_0,r)\big)$ because it can be represented as
\begin{equation}
 u(t,z;T,k) = \int_{k}^{\bar{k}} {\G}_{S}(t,z;T,\xi) (e^{\xi} - e^{k})d\xi +
\int_{[\bar{k},+\infty[ } {p}_{S}(t,z;T,d\xi) (e^{\xi} - e^{k}),
\end{equation}
for some $\bar{k}>k$, where ${p}_{S}$ denotes the marginal transition probability of $\log
S$.
This is the case, for instance, of the Heston model where $S_{T}$ has a smooth marginal density (see Remark \ref{rem:Heston_density}).

The need for conditions \eqref{ae28} and \eqref{se24} will be clarified in the proofs of
Lemma \ref{lemm:ste_dens} and Theorem \ref{t2}, respectively. Condition \eqref{ae28} is
intuitively easy to understand: roughly speaking, it states that the derivatives of the local
density $\G(t,z;T,\z)$ are locally bounded, away from the pole, all the way up to $t=T$.
This looks like a sensible condition, given the boundedness hypothesis for the
diffusion coefficients on the whole cylinder. By opposite, condition \eqref{se24} might seem a little bit cryptic at a first glance; however, in
most cases of interest such hypothesis turns out to be substantially simplified.
For instance, in many financial models such as the Heston model, the local density ${\Gamma}$ is defined on the whole strip $B(x_{0},r)\times \DomY$ (see Remark \ref{rem:Heston_density}), i.e. we have
\begin{equation}\label{eq:ste131}
 p(t,z;T,H)=\int_{H}\G(t,z;T,\z) d \z, \qquad H\in\Bc\big(B(x_{0},r)\times \DomY\big).
\end{equation}
In this case, condition \eqref{se24} is automatically satisfied for {$q=0$ and} $m=0,1$, whereas
for $2\leq {m + 2 q}\leq \hat{N}$ it reduces to
\begin{equation}
{\left| \int_{[x_0+\d r,+\infty[ \times \DomY}  \p_{T}^{q} \G(t,z;T,\zeta) d \zeta \right|+}
\left|\int_{|\y-y_{0}|>{\d^2}r} {\partial_{T}^{q} \partial_{k}^{(m-2)\vee 0} }
\G(t,z;T,k,\eta)d\eta \right| \le \Cdb,
\end{equation}
for any $(T,k)\in ]0, T_{0}[\times B(x_0,\d^2 r)$, $(t,z)\in \Hb(T,z_{0},\d^3 r)$.
\end{remark}

We are now ready to state the main result of this section.

\begin{theorem}\label{t2}
Let $d\geq 2$, and let Assumptions \ref{assum1i}, \ref{assum1ii}, \ref{assum2and2} and \ref{exder}
be in force. Then, for any {$m,q\in\N_0$ with $m+2q\leq \hat{N}$} and $T\in ]0,T_{0}[$, we have
\begin{equation}\label{e2b}
  \left|{\partial^q_T}\partial^m_k \big(u- \bar{u}_{N}\big)(t,z;T,k)\right|
  \le C{M^q}\left(M(T-t)\right)^{\frac{{N-m-2q+2}}{2}},\qquad (t,z)\in \Hb(T,z_{0},\d^4 r),\
  |k-x_{0}|<\d^{4}r,
 \end{equation}
where $\d\in]0,1[$ is as in Assumption \ref{exder}, and the positive constant $C$ depends only on $r,z_{0},d, M_0,\e,N,T_0$ and, only if $\hat{N}\geq 2$, also on $\delta$ and the constants $\Cdt $ and $\Cdb $ in \eqref{ae28} and \eqref{se24}. In particular, $C$ is independent of $M$.
\end{theorem}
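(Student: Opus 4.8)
The plan is to split $u - \bar u_N = (I_{1,\d^4} - \bar u_N) + I_{2,\d^4}$ according to the decomposition \eqref{ae20}, and to bound the ${\partial_T^q}\partial_k^m$-derivatives of the two pieces by entirely different mechanisms: a \emph{comparison with a globally parabolic proxy} for the first, and a \emph{probabilistic tail estimate} for the second. The central idea, announced in the introduction, is to introduce the extension $\tilde\Ac_t$ from Assumption \ref{assum2and2}, which is globally uniformly elliptic with bounded coefficients and coincides with $\Ac_t$ on $\Hb(T_0,z_0,r)$; write $\tilde\Gamma$ for its fundamental solution and $\tilde u$ for the associated Call price, so that Theorem \ref{th:error_estimates_taylor} (with $\tilde\Ac_t$ in place of $\Ac_t$) gives the clean bound $|{\partial_T^q}\partial_k^m(\tilde u - \bar u_N)| \le Ce^x M^q(M(T-t))^{\frac{N-m-2q+2}{2}}$, since $\bar u_N$ is built only from Taylor data of the coefficients at $z_0$, which are the same for $\Ac_t$ and $\tilde\Ac_t$. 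The residual task is then to show that on the small cylinder $\Hb(T,z_0,\d^4 r)$ one has
\begin{equation}\label{eq:proposal-key}
 \left|{\partial_T^q}\partial_k^m\big(u - \tilde u\big)(t,z;T,k)\right| \le C M^q (M(T-t))^{\frac{N-m-2q+2}{2}},
\end{equation}
and in fact I would prove the much stronger statement that this difference decays \emph{exponentially} in $M(T-t)^{-1}$, which is the heart of the matter.

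To obtain \eqref{eq:proposal-key} I would first handle the derivatives falling on $I_{1,\d^4}$: using the representation \eqref{ae90} of ${\partial_T^q}\partial_k^m I_{1,\d}$ through $U_{1,q,m,\d}$ and $U_{2,q,m,\d}$, which involve only ${\partial_T^q}\partial_k^{j}\G$ integrated over $D(z_0,\d r)$, the comparison reduces to estimating ${\partial_T^q}\partial_k^{j}(\G - \tilde\Gamma)$ on a slightly larger cylinder. This is where I expect to invoke the techniques of \cite{Safonov1998} mentioned in the introduction: both $\G$ and $\tilde\Gamma$ solve the same forward (or backward) Kolmogorov equation on $\Hb(T,z_0,r)$, they have the same pole behavior as $T\downarrow t$ when $z$ is near $z_0$ (by Remark \ref{ra7}(ii) and the analogous property of $\tilde\Gamma$), so their difference $w := \G - \tilde\Gamma$ solves a homogeneous parabolic equation on $\Hb(T,z_0,r)$ with \emph{vanishing initial trace near $z_0$} and bounded boundary data on $\Sigma(T,z_0,r)$ — here condition \eqref{ae28} of Assumption \ref{exder} supplies the needed boundedness of the ${\partial_T^q}\partial_k^m$-derivatives of $\G$ on the lateral shell $\Hb(T,z_0,r)\setminus\Hb(T,z_0,\d r)$, and the Gaussian bounds of Lemma \ref{lem:gaussian_estimates} supply it for $\tilde\Gamma$. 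A barrier/maximum-principle argument of Safonov type (iterated over the scales $\d r \supset \d^2 r \supset \d^3 r \supset \d^4 r$) then forces $|w|$ and its parabolic derivatives up to order $\hat N$ to be $\le C\exp(-c/(M(T-t)))$ on $\Hb(T,z_0,\d^4 r)$, which is $o$ of any power of $M(T-t)$ and so is absorbed into the right-hand side of \eqref{e2b}.

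The derivatives falling on $I_{2,\d^4}$ are handled by the probabilistic estimate \eqref{se24} together with a tail bound: $I_{2,\d}$ is supported on the event that $(\log S,Y)$ exits $D(z_0,\d r)$ before time $T$, and by \eqref{ae91} its ${\partial_T^q}\partial_k^m$-derivative equals $U_{3,q,m,\d^2} + U_{4,q,m,\d^2}$, both bounded by $\Cdb$ via \eqref{se24} (when $\hat N\ge 2$; the cases $m+2q\le 1$ being direct). To upgrade the crude bound $\Cdb$ to the required power $(M(T-t))^{\frac{N-m-2q+2}{2}}$ I would again use a Safonov-type localization: the probability of exiting the ball $B(z_0,\d^3 r)$ starting from $B(z_0,\d^4 r)$ within time $T-t$ is exponentially small in $1/(M(T-t))$ by the Gaussian/large-deviation estimates available for the uniformly parabolic operator that agrees with $\Ac_t$ on that ball, so $I_{2,\d^4}$ and all its bounded derivatives are themselves $O(\exp(-c/(M(T-t))))$ on the inner cylinder. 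Combining the three contributions — the clean Taylor error for $\tilde u - \bar u_N$, the exponentially small $u - \tilde u = (I_{1,\d^4}-\tilde u) - (\text{analogous }\tilde I_2)$ piece (rewriting $\tilde u$ via its own decomposition so the comparison is between matching integrals over $D(z_0,\d^4 r)$), and the exponentially small $I_{2,\d^4}$ — yields \eqref{e2b} with a constant $C$ that depends on $M_0$ (through the Gaussian constants and the Safonov barriers) but, crucially, not on $M$, since every place $M$ enters does so only in the combination $M(T-t)$ or in the exponentially decaying factors. The main obstacle is the Safonov-type comparison of $\G$ with $\tilde\Gamma$ near the diagonal: one must propagate the matching of the singularity at $T=t$ into a genuine pointwise (and derivative-wise) exponential bound on their difference, uniformly in $M$, which requires constructing parabolic barriers whose scaling is adapted to the ellipticity constant $M$ and carefully tracking how the iteration over the nested scales $\d^j r$ preserves the $M$-independence of the final constant.
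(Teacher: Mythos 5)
Your proposal follows the same architecture as the paper's proof: introduce the globally parabolic extension $\tilde{\Ac}_{t}$ and its price function $\tilde{u}$, control $\tilde{u}-\bar{u}_N$ by Theorem \ref{th:error_estimates_taylor}, and show that $u-\tilde{u}$ (together with its $\partial_T^q\partial_k^m$-derivatives) is exponentially small on an inner cylinder by a Safonov-type barrier and the maximum principle, relying on Assumption \ref{exder}, Lemma \ref{lemm:ste_dens}, and Gaussian bounds for $\tilde{\Gamma}$. This is precisely the strategy of the actual proof (Step 1 establishes a crude uniform bound for $w_{q,m}=\partial_T^q\partial_k^m(u-\tilde{u})$ on $\Sigma(T,z_0,\delta^3 r)$; Step 2 applies Lemma \ref{lemma:zerolimit} and the barrier $v$ of \eqref{e5}--\eqref{e6} to bootstrap to exponential decay on $\Hb(T,z_0,\delta^4 r)$), so the proposal is essentially the paper's argument.

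One caveat worth flagging: you propose to bound $I_{2,\delta^4}$ and its derivatives \emph{directly} by an exponential tail estimate for the exit probability of $(\log S,Y)$. Such an estimate is not actually available under the standing hypotheses, because the generator of $(\log S,Y)$ is only locally elliptic and \eqref{se24} delivers nothing beyond a crude constant bound for $U_{3}$ and $U_{4}$. The paper avoids this by never isolating $I_2$: it uses \eqref{se24} only to obtain the uniform boundary bound $\|w_{q,m}\|_{L^\infty(\Sigma(T,z_0,\delta^3 r))}\le C$, and then the exponential smallness is extracted from the interior maximum-principle estimate for the full difference $w_{q,m}$ (which solves the homogeneous uniformly parabolic equation $(\partial_t+\tilde{\Ac}_t)w_{q,m}=0$ inside the cylinder). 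Your parenthetical remark about ``rewriting $\tilde u$ via its own decomposition'' points in the right direction; to make the argument airtight you should recombine the pieces into $w_{q,m}$ before invoking the barrier, rather than attempting a probabilistic tail bound for $I_2$ alone. Also, the achievable decay is $e^{-r^2/(C\sqrt{M(T-t)})}$ (with $\sqrt{M(T-t)}$ in the denominator, as in \eqref{eq:ste43}), not $e^{-c/(M(T-t))}$; both are of course $o\big((M(T-t))^{p}\big)$ for every $p$, so the conclusion is unaffected.
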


\begin{lemma}\label{lemma:zerolimit}
{Let $D_{0}$ be a domain of $\R^n$ and
  $$h(\cdot,\cdot;T,\th):\overline{\Ho(T,z_{0},r)}\longrightarrow \R,\qquad (T,\th)\in ]0,T_0[\times D_0,$$
such that: }
\begin{enumerate}
\item[i)] {for any $(t,z)\in [0,T_0[\times \overline{D(z_{0},r)}$, the function $h(t,z;\cdot,\cdot)\in C^{p}\big(]t,T_0[\times D_{0}\big)$}
with derivatives {$\partial^q_T D^{\beta}_{\theta} h(t,z;T,\th)$ locally bounded in $(T,\th)$},
uniformly w.r.t. {$(t,z)\in  [0,T[ \times \big(\, \overline{D(z_{0},r)}\setminus D(z_{0},\r_{0}
r)\big)$ for a certain $\r_{0}\in ]0,1[$};
\item[ii)] for any {$(T,\th)\in\, ]0,T_0[\times D_{0}$} the function
$h(\cdot,\cdot;{T,\th})\in C^{1,2}\left(\Hb(T,z_{0},r)\right)\cap
C\big(\overline{H(T,z_{0},r)}\big)$ and verifies
\begin{equation}
\begin{cases}
 \left(\p_t + \tilde{\Ac}_{t}\right)h(t,z;T,\th)=  0,\qquad  &(t,z)\in  \Hb(T,z_{0},r),\\
 h(T,z;T,\th)=0,&  z\in D(z_{0},r).
\label{eqpde_tilde_ste}
\end{cases}
\end{equation}
\end{enumerate}
Then for any multi-index $\beta\in \N_0^{n}$ and {any $q\in\N_0$ with $q + |\beta|\leq p$}, we
have
\begin{equation}\label{eq:zerolimit}
 \lim_{(t,z)\to (T,\bar{z})\atop {t<T} } {\partial_T^q} D^{\beta}_\th h(t,z;T,\th)= 0, \qquad \bar{z}\in D(z_{0},r),\ {(T,\th)\in ]0,T_0[\times D_{0}}.
\end{equation}
\end{lemma}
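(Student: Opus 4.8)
\proof[Sketch of the proof of Lemma~\ref{lemma:zerolimit}]
Fix $(T,\th)\in\,]0,T_{0}[\times D_{0}$, $\bar z\in D(z_{0},r)$, a multi-index $\beta$ and $q\in\N_{0}$ with $q+|\beta|\le p$, and set $d_{0}:=\text{\rm dist}\big(\bar z,\p D(z_{0},r)\big)>0$. The plan is to represent $h$ by a Feynman--Kac formula on the cylinder $\Ho(T,z_{0},r)$, to differentiate this formula in the parameters $(T,\th)$ --- which enter \emph{only} through the lateral datum of $h$ and not through the underlying diffusion, since the coefficients of $\tilde{\Ac}_{t}$ do not depend on $(T,\th)$ --- and to close the argument with a Gaussian exit-time estimate.

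\emph{Step 1 (parameter derivatives solve the same equation and vanish on the diagonal).} As the coefficients of $\tilde{\Ac}_{t}$ depend on $(t,z)$ only, $\p_{t}+\tilde{\Ac}_{t}$ commutes with $\p_{T}^{q}D_{\th}^{\beta}$. Applying $\p_{T}^{q}D_{\th}^{\beta}$ to the difference quotients of $h$ in $(T,\th)$ --- which, by hypothesis i), the zero terminal datum in \eqref{eqpde_tilde_ste}, a maximum--principle bound and interior Schauder estimates, are locally uniformly bounded and converge in $C^{1,2}_{\loc}$ --- one gets that $w:=\p_{T}^{q}D_{\th}^{\beta}h(\cdot,\cdot;T,\th)$ is a classical solution of $(\p_{t}+\tilde{\Ac}_{t})w=0$ on $\Hb(T,z_{0},r)$. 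Moreover $w$ vanishes on the diagonal $t=T$: the $\th$-derivatives trivially, since $h(T,\cdot\,;T,\th)\equiv 0$; for the $T$-derivatives one argues by induction on $q$, differentiating the identity $h(T,z;T,\th)\equiv 0$ in $T$ and using the equation to replace $\p_{t}h$ by $-\tilde{\Ac}_{t}h$, which annihilates a function that is identically zero in $z$, whence $(\p_{T}^{q}D_{\th}^{\beta}h)(T,z;T,\th)=0$ for $z\in D(z_{0},r)$ and, by continuity, also for $z\in\p D(z_{0},r)$. Together with the uniform bounds of hypothesis i) this gives the linear estimate $|\p_{T}^{j}D_{\th}^{\beta}h(s,\zeta;T',\th)|\le C|T'-s|$ for $\zeta\in\p D(z_{0},r)$, $T'$ near $s$ and $j+|\beta|\le p$, with $C$ locally bounded in $(T,\th)$.

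\emph{Step 2 (differentiating the Feynman--Kac representation).} Let $\Xt=(\Xt_{u})_{u\ge t}$ be the unique, uniformly elliptic diffusion with generator $\tilde{\Ac}_{u}$ started from $z$ at time $t$, and let $\t_{1}$ be its first exit time from $D(z_{0},r)$. Since $h$ is a bounded classical solution of a uniformly parabolic equation on the bounded cylinder $\Ho(T,z_{0},r)$, continuous up to the parabolic boundary, Feynman--Kac and the terminal condition in \eqref{eqpde_tilde_ste} give
\begin{equation}\label{eq:zerolimit_FK}
 h(t,z;T,\th)=E_{t,z}\left[\mathbf{1}_{\{\t_{1}<T\}}\,h(\t_{1},\Xt_{\t_{1}};T,\th)\right],\qquad (t,z)\in\Hb(T,z_{0},r),
\end{equation}
with $\Xt_{\t_{1}}\in\p D(z_{0},r)$ on $\{\t_{1}<T\}$; note that $(\Xt,\t_{1})$ is independent of $(T,\th)$. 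Differentiating \eqref{eq:zerolimit_FK} in $(T,\th)$ under the expectation is justified by dominated convergence: the $\th$-derivatives hit only $h(\t_{1},\Xt_{\t_{1}};T,\cdot)$, which together with its $\th$-derivatives is bounded on $\p D(z_{0},r)$ by hypothesis i); the $T$-derivatives also meet the threshold in $\mathbf{1}_{\{\t_{1}<T\}}$, but $P_{t,z}[\t_{1}=T]=0$ by non-degeneracy of $\Xt$, so the integrand is a.s.\ differentiable in $T$, and on the event $\{T\le\t_{1}<T+\delta\}$ its difference quotients are controlled by the linear estimate of Step 1 (since there $\t_{1}\ge T$), and similarly at higher order. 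Hence
\begin{equation}\label{eq:zerolimit_FKder}
 \p_{T}^{q}D_{\th}^{\beta}h(t,z;T,\th)=E_{t,z}\left[\mathbf{1}_{\{\t_{1}<T\}}\,(\p_{T}^{q}D_{\th}^{\beta}h)(\t_{1},\Xt_{\t_{1}};T,\th)\right],
\end{equation}
and, bounding the lateral integrand by the constant of hypothesis i),
\begin{equation}\label{eq:zerolimit_bound}
 \big|\p_{T}^{q}D_{\th}^{\beta}h(t,z;T,\th)\big|\le C\,P_{t,z}\big[\t_{1}<T\big].
\end{equation}

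\emph{Step 3 (conclusion, and the main obstacle).} For the uniformly elliptic $\tilde{\Ac}_{u}$ with bounded coefficients, the classical Gaussian bounds for the fundamental solution of $(\p_{t}+\tilde{\Ac}_{t})$ (cf.\ Lemma~\ref{lem:gaussian_estimates}) and a standard stopping-time argument yield, for $z$ with $\text{\rm dist}\big(z,\p D(z_{0},r)\big)\ge d$ and $T-t$ small,
\begin{equation}\label{eq:zerolimit_exit}
 P_{t,z}\big[\t_{1}<T\big]\le P_{t,z}\Big[\sup_{t\le u\le T}|\Xt_{u}-z|\ge d\Big]\le C\exp\!\left(-\frac{c\,d^{2}}{T-t}\right),
\end{equation}
with $c,C$ depending only on $d$, $M_{0}$, $\varepsilon$. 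Letting $(t,z)\to(T,\bar z)$ with $t<T$ we have $\text{\rm dist}\big(z,\p D(z_{0},r)\big)\to d_{0}>0$ and $T-t\to 0^{+}$, so the right-hand side of \eqref{eq:zerolimit_bound}--\eqref{eq:zerolimit_exit} tends to $0$; this is \eqref{eq:zerolimit}. The delicate point is Step 2: passing $\p_{T}^{q}$ through the expectation across the moving endpoint $\{\t_{1}<T\}$, equivalently propagating the order-$q$ vanishing on the diagonal to the lateral-boundary points visited by $\Xt$ --- which is exactly what forces hypothesis i) to couple the pointwise $C^{p}$ regularity of $h$ in $(T,\th)$ with a bound on the $(T,\th)$-derivatives that is \emph{uniform} over a lateral collar $\overline{D(z_{0},r)}\setminus D(z_{0},\r_{0} r)$. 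A purely PDE-based route --- comparing $w$ with a Gaussian supersolution of the type $C\exp\big(-\lambda\,\text{\rm dist}(z,\p D(z_{0},r))^{2}/(T-t)\big)$ via the parabolic maximum principle --- gives the same conclusion, but requires an a~priori bound on $w$ up to the terminal corner, which \eqref{eq:zerolimit_FK} supplies transparently.
\endproof
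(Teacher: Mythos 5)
Your strategy coincides with the paper's at the structural level: exploit the fact that the parameters $(T,\th)$ enter only through the lateral boundary datum, represent $\p_T^q D^\beta_\th h$ by an exit-measure formula on a cylinder, bound the representation by an exit-time probability, and invoke a Gaussian estimate — indeed, the paper's Poisson-kernel identity
$\p_T^q D^\beta_\th h(t,z;T,\th)=\int_t^T\int_{\partial D(z_0,\rho r)}P_{\rho r}(t,z;s,\z)\,\p_T^q D^\beta_\th h(s,\z;T,\th)\,d\z\,ds$
is exactly the PDE form of your Feynman--Kac formula \eqref{eq:zerolimit_FKder}, and $\int_t^T\int_{\partial D}P_{\rho r}\,d\z\,ds = P_{t,z}[\t_1<T]$ is your exit bound. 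So the $q=0$ case is essentially identical.

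The gap is concentrated in Step~1 and in the choice of exit domain, and it is precisely the "delicate point" you flag at the end. You take $\t_1$ to be the exit time from the \emph{full} ball $D(z_0,r)$, so that $\Xt_{\t_1}\in\partial D(z_0,r)$; to control the event $\{T\le\t_1<T+\delta\}$ you then need the linear estimate $|\p_T^j D^\beta_\th h(s,\zeta;T',\th)|\le C\,|T'-s|$ for $\zeta\in\partial D(z_0,r)$, which requires the vanishing $\p_T^j D^\beta_\th h(s,\zeta;T',\th)\to 0$ as $T'\to s^+$ \emph{on the lateral boundary} $\partial D(z_0,r)$. This cannot be supplied by hypothesis i) (which gives only local boundedness, not a limiting value) nor by the chain-rule manipulation of the terminal identity $h(T,z;T,\th)\equiv 0$: that manipulation presupposes joint $C^1$ regularity of $h$ in $(t,T)$ at the corner $t=T$ and that the PDE extends to $t=T$, and neither is granted (hyp.~ii) gives $C^{1,2}$ in $(t,z)$ only for $t<T$, hyp.~i) gives $C^p$ in $(T,\th)$ only for $T>t$; there is no joint control at the corner). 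For $z\in\partial D(z_0,r)$ the Feynman--Kac identity is a tautology ($\t_1=s$ a.s.) and the PDE does not constrain the lateral datum at all, so the order-$q$ diagonal vanishing on $\partial D(z_0,r)$ that your dominated-convergence argument requires is simply not forced by the hypotheses. This is why the paper shrinks the cylinder to $D(z_0,\rho r)$, $\rho\in[\rho_0,1[$: the exit boundary $\partial D(z_0,\rho r)$ is then \emph{interior} to $D(z_0,r)$, so the inductive hypothesis \eqref{eq:zerolimit} at level $q$ (for $\bar z=\zeta\in D(z_0,r)$) kills the Leibniz boundary term, and no separate linear estimate on $\partial D(z_0,r)$ is needed. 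The fix to your argument is the same shrinkage: replace $\t_1$ by the exit time from $D(z_0,\rho r)$ with $\rho\in[\rho_0,1[$, keep hyp.~i) only for the uniform bound on the collar (used in dominated convergence and for the $q=0$ base step), and run the induction on $q$ with \eqref{eq:zerolimit} itself as the inductive hypothesis to annihilate the contribution of $\{T\le\t_1<T+\delta\}$; Step~1's direct chain-rule derivation of the diagonal vanishing should be dropped, as it proves nothing the induction does not already furnish and it asserts regularity you do not have.
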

\begin{proof} By induction on $q$ we prove \eqref{eq:zerolimit} and that, {for any
$\r\in[\r_{0},1[$,} we have
\begin{equation}\label{eq:ste404}
 \p_{T}^{q}D^{\beta}_\th h(t,z;{T}, \th)=\int_{t}^{T} \int_{\partial D(z_{0},\r r)} P_{\r r}(t,z;s,\z) \p_{T}^{q}D^{\beta}_\th h(s,\z;{T},\th)  d \z d s, \qquad (t,z)\in \Ho(T,z_{0},\r r),
\end{equation}
where $P_{\r r}$ denotes the {Poisson kernel} of the uniformly parabolic operator $\big(\p_t +
\tilde{\Ac}_{t}\big)$ on $\Ho(T,z_{0},\r r)$.

For $q=0$, differentiating the representation formula
\begin{equation}\label{eq:ste404b}
  h(t,z;{T}, \th)=\int_{t}^{T} \int_{\partial D(z_{0},\r r)} P_{\r r}(t,z;s,\z) h(s,\z;{T},\th)  d \z d s, \qquad (t,z)\in \Ho(T,z_{0},\r r),
\end{equation}
and using the terminal condition in \eqref{eqpde_tilde_ste}, we obtain
\begin{equation}
  \big|D^{\beta}_\th h(t,z;T,\theta)\big| \leq {\big\|D^{\beta}_\th h(\cdot,\cdot;T,\theta)
  \big\|_{L^{\infty}(\Sigma(T,z_{0},\r r) ) } }
  \int_{t}^{T} \int_{\partial D(z_{0},\r r)} P_{\r r}(t,z;s,\z)  d \xi d s, \qquad (t,z)\in H(T,z_{0},\r r),
\end{equation}
which in turn implies \eqref{eq:zerolimit} with $q=0$.

Next, we assume \eqref{eq:zerolimit} and \eqref{eq:ste404} true for $q$: by differentiating
\eqref{eq:ste404} we get
\begin{align}
 \p_{T}^{q+1}D^{\beta}_\th h(t,z;{T}, \th) = &\int_{\partial D(z_{0},\r r)} P_{\r
 r}(t,z;T,\z) \p_{T}^{q}D^{\beta}_\th h(T,\z;{T},\th)  d \z
 \\ &+\int_{t}^{T} \int_{\partial D(z_{0},\r r)} P_{\r
 r}(t,z;s,\z) \p_{T}^{q+1}D^{\beta}_\th h(s,\z;{T},\th)  d \z d s=
\intertext{(by \eqref{eq:zerolimit})}
 =&\int_{t}^{T} \int_{\partial D(z_{0},\r r)} P_{\r
 r}(t,z;s,\z) \p_{T}^{q+1}D^{\beta}_\th h(s,\z;{T},\th)  d \z d s, \qquad (t,z)\in \Ho(T,z_{0},\r
 r).
\end{align}
Then, for $(t,z)\in H(T,z_{0},\r r)$ we have
\begin{equation}
  \big|\p_{T}^{q+1}D^{\beta}_\th h(t,z;T,\theta)\big| \leq {\big\|\p_{T}^{q+1}D^{\beta}_\th h(\cdot,\cdot;T,\theta)
  \big\|_{L^{\infty}(\Sigma(T,z_{0},\r r) ) } }
  \int_{t}^{T} \int_{\partial D(z_{0},\r r)} P_{\r r}(t,z;s,\z)  d \xi d s,
\end{equation}
which concludes the proof. 
%
%
%
%
%
\end{proof}

The following lemma is preparatory for the proof of Theorem \ref{t2}, but it may also have an
independent interest: it shows that the difference between $\G$ and $\tilde{\G}$, and of their
derivatives, decays exponentially on $\Ho(T,z_0,r)$ as $t$ approaches $T$.
\begin{lemma}\label{lemm:ste_dens} Let $\hat{N}\geq 2$ and let $\tilde{\G}$ be the fundamental solution of the uniformly
parabolic operator $\big(\p_t + \tilde{\Ac}_{t}\big)$. Then, under the assumptions of Theorem
\ref{t2}, for any {$m,q\in\N_0$ with $m+2q\leq \hat{N}$} we have
\begin{equation}\label{eq:ste43}
 \left|{\partial_T^q {\partial_{k}^{m}}}\big(\G-\tilde{\G}\big)(t,z;T,k,\eta)\right|
 \leq C e^{-\frac{1}{C\sqrt{M(T-t)}}}, \qquad (T,k,\y)\in \Ho(T_{0},z_0,\d^2 r),\ (t,z)\in \Hb(T,z_{0},\d^2 r),
 \end{equation}
where $C$ is a positive constant that depends only on $z_0,\d,N,d, M_0,\e,T_0$, and on
$\Cdt,\Cdb $ in \eqref{ae28} and \eqref{se24}.
\end{lemma}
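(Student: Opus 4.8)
The goal is to estimate the difference $\G-\tilde\G$ and its $\partial_T^q\partial_k^m$-derivatives on the inner cylinder $\Ho(T_{0},z_{0},\d^{2}r)$, exploiting that $\G$ and $\tilde\G$ satisfy the \emph{same} backward Kolmogorov equation $(\p_{t}+\Ac_{t})f=0$ on $\Hb(T,z_{0},r)$ (since $\Ac_{t}=\tilde\Ac_{t}$ there, by Assumption \ref{assum2and2}), the difference being only in the terminal behaviour as $t\to T$ and in the global behaviour of $\tilde\G$ outside the cylinder. The plan is to set $w:=\G-\tilde\G$, which is a classical solution of $(\p_{t}+\tilde\Ac_{t})w=0$ on $\Hb(T,z_{0},r)$, and to control $w$ on the parabolic boundary $\Sigma(T,z_{0},r)\cup(\{T\}\times D(z_{0},r))$. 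On the lateral boundary, $\tilde\G$ is a genuine Gaussian fundamental solution and satisfies the estimates of Lemma \ref{lem:gaussian_estimates}; in particular for $(t,z)$ in the inner region and $\z\in\p D(z_{0},r)$ one has $|z-\z|$ bounded below by a fixed multiple of $r$, so $\tilde\G(t,z;\cdot,\cdot)$ and its derivatives are $O\!\big(e^{-c/(M(T-t))}\big)$. The local density $\G$ is harder to bound on the lateral boundary because it need not be Gaussian, but here conditions \eqref{ae28} and \eqref{se24} of Assumption \ref{exder} enter precisely: \eqref{ae28} bounds $\p_{T}^{q}\p_{k}^{m}\G$ away from the pole all the way to $t=T$, which is exactly what is needed on $\Sigma$.

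The main work is therefore to run a \textbf{Safonov-type iteration} (as announced in the introduction, following \cite{Safonov1998}): start with the crude bounds above on $\p D(z_{0},r)$, use the Poisson-kernel representation (as in Lemma \ref{lemma:zerolimit}, formula \eqref{eq:ste404b}) for the uniformly parabolic operator $(\p_{t}+\tilde\Ac_{t})$ on a slightly smaller cylinder, and iterate over a decreasing sequence of radii $r_{n}\downarrow \d^{2}r$. At each step one gains a factor coming from the Gaussian decay of the Poisson kernel over the gap $r_{n}-r_{n+1}$, which forces the geometric blow-up of constants along the iteration to be beaten by a super-exponential gain; optimising the number of steps against $\sqrt{M(T-t)}$ produces the claimed bound $e^{-1/(C\sqrt{M(T-t)})}$. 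For the derivatives $\p_{T}^{q}\p_{k}^{m}w$ one differentiates the representation formula just as in the proof of Lemma \ref{lemma:zerolimit}: since $w$ vanishes at $t=T$ (both $\G$ and $\tilde\G$ reproduce the same $\phi$ by Remark \ref{ra7}(ii) and \eqref{ae53}), the boundary term at $s=T$ drops and one is left with an integral of $\p_{T}^{q}\p_{k}^{m}w$ over $\Sigma$, on which the inductive hypothesis and \eqref{ae28}–\eqref{se24} give the required control; here one also uses interior parabolic (Schauder) estimates to trade spatial derivatives of $w$ for the already-controlled $L^{\infty}$-norm on a slightly larger cylinder.

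One technical point deserving care is that $\G$ is only defined on $D(z_{0},r)$, not on all of $\R^{d}$, whereas $\tilde\G$ is global; so $w$ is only defined on the cylinder and the "terminal condition $w(T,\cdot)=0$'' must be read in the sense of the limits in Remark \ref{ra7}(ii), and the contribution of the tails of the law of $(\log S,Y)$ outside $D(z_{0},r)$ — encoded by $U_{3,q,m,\d^{2}}$ and $U_{4,q,m,\d^{2}}$ — has to be absorbed into the lateral-boundary data; this is exactly why \eqref{se24} is stated the way it is. The \textbf{main obstacle} is the bookkeeping of the Safonov iteration: choosing the radii $r_{n}$, the number of iterations $n\sim (M(T-t))^{-1/2}$, and the per-step gain so that the product of the geometrically growing constants (which include the derivative count through interior estimates) is dominated, uniformly in $M$, yielding a constant $C$ depending only on $z_{0},\d,N,d,M_{0},\e,T_{0},\Cdt,\Cdb$. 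Once \eqref{eq:ste43} is in place for $m+2q\le\hat N$, it plugs directly into the proof of Theorem \ref{t2}.
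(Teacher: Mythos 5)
Your overall strategy matches the paper's: write $w_{q,m}:=\partial_T^q\partial_k^m(\G-\tilde\G)(\cdot,\cdot;T,k,\y)$ as a caloric function of $(t,z)$ for the uniformly parabolic operator $(\p_t+\tilde\Ac_t)$ on the cylinder, bound it on the parabolic boundary (lateral part via \eqref{ae28}, terminal part by the vanishing of $\G-\tilde\G$ and its derivatives as $t\to T^-$), and then deduce exponential interior decay via a Safonov-type argument. The paper, however, does not run the Safonov iteration inline. It imports, from \cite{PP_compte_rendu}, a ready-made nonnegative barrier $v$ satisfying $(\p_t+\tilde\Ac_t)v=0$ on $[T-\tau,T[\times D(z_0,r)$, $v\ge 1$ on the lateral boundary, and $v\le C e^{-r^2/(C\sqrt{M(T-t)})}$ on the inner cylinder; the lemma then follows from a single application of the maximum principle, $|w_{q,m}|\le \Cdt\, v$. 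This is cleaner and sidesteps exactly the radius-optimisation bookkeeping you flag as the ``main obstacle.'' For the vanishing of $w_{q,m}$ at $t=T$, the paper does not use interior Schauder estimates as you propose; it introduces the smoothed auxiliary function $h(t,z;k)=\int_{D(z_0,r)}(\G-\tilde\G)(t,z;T,\z)\,\psi\big(\z-(k,\y)\big)\,d\z$ with $\psi(\z)=\prod_i\z_i^+$, so that $\partial_T^q\partial_{\z_1}^{2+m}\partial_{\y_2}^2\cdots\partial_{\y_d}^2 h = w_{q,m}$, and then applies Lemma~\ref{lemma:zerolimit} to $h$; this neatly turns the vanishing of $h$ at $T$ into the vanishing of all the needed derivatives of $w$.

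One genuine error in your plan: the tail terms $U_{3,q,m,\d^2}$ and $U_{4,q,m,\d^2}$, and hence Assumption~\ref{exder}'s condition \eqref{se24}, play no role in this lemma. Those quantities arise in the decomposition \eqref{ae20} of the \emph{pricing function} $u$, which integrates the payoff against the full transition law $p$ over all of $\R^d$; they control the contribution of the mass of $p$ outside $D(z_0,\d r)$. Here, by contrast, $\G$ and $\tilde\G$ are densities and $w_{q,m}$ is defined purely on the cylinder with fixed terminal point $(T,k,\y)\in \Ho(T_0,z_0,\d^2 r)$ — the tails of the law never enter. The only ingredient of Assumption~\ref{exder} that the paper uses in this lemma is the pointwise bound \eqref{ae28} on $\p_T^q\p_k^m\G$ away from the pole, which controls $w_{q,m}$ on the lateral boundary. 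Condition \eqref{se24} is used later, in Step 1 of the proof of Theorem~\ref{t2}, where $U_{3,q,m,\d^2}$ and $U_{4,q,m,\d^2}$ do appear in the differentiated representation of $u$.
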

\begin{proof}
\emph{Step 1.}
Fix
$(T,k,\y)\in  \Ho(T_{0},z_{0},\d^2 r)$ and consider the function
  $${w_{q,m}}(t,z):={\partial_T^q \partial_{k}^{m}}\big(\G-\tilde{\G}\big)(t,z;T,k,\y),\qquad (t,z)\in \Hb(T,z_{0}, r).$$
We prove that
\begin{equation}\label{ae21}
  \begin{cases}
    \big(\p_t +  \tilde{\Ac}_{t}\big){w_{q,m}}=0, & \text{on } \Hb(T,z_{0}, r), \\
   \lim\limits_{(t,z)\to (T,\bar{z})\atop t<T} {w_{q,m}}(t,z)=  0, & \bar{z}\in D(z_{0},r).
  \end{cases}
\end{equation}
The first equation in \eqref{ae21} follows from the fact that $\Ac_{t}$ and $\tilde{\Ac}_{t}$
coincide on $\Hb(T_{0},z_{0},r)$.
To prove the second one, we set
\begin{equation}
 h(t,z;k):=\int_{D(z_{0},r)} \left(\G(t,z;T,\z)-\tilde{\G}(t,z;T,\z)\right) \psi(\z-(k,\y))d \z,\qquad
 (t,z)\in  \Hb(T,z_{0},r),
\end{equation}
where
\begin{equation}
\psi(z):= \prod_{i=1}^{d}\z_i^{+}, \qquad \z=(\z_1,\dots,\z_d)\in\R^d.
\end{equation}
Notice that $h(\cdot,\cdot;T,k,\y)$ satisfies
\begin{equation}
 \begin{cases}
 \big(\p_t + \tilde{\Ac}_{t}\big)h(t,z;k)=  0,\qquad  &(t,z)\in \Hb(T,z_{0},r),
 \\
 h(t,z;k)=  0,& z\in D(z_{0},r).
\end{cases}
\end{equation}
Moreover,
we have
  $$\partial_{k}^{2} \partial_{\y_2}^2 \cdots \partial_{\y_d}^2 h(t,z;k)=\G(t,z;T,k,\y)-\tilde{\G}(t,z;T,k,\y),$$
and therefore also
\begin{equation}
{\partial_T^q \partial_{k}^{2+m} } \partial_{\y_2}^2 \cdots \partial_{\y_d}^2
h(t,z;k)={w_{q,m}}(t,z).
\end{equation}
Hence, by applying Lemma \ref{lemma:zerolimit} to $h$
we obtain the limit in \eqref{ae21}.

\medskip\noindent
\emph{Step 2.} It suffices to prove the thesis for $T-t$ suitably small and positive.
In \cite[Theorem 3.1]{PP_compte_rendu} we proved that
there exist $\t>0$ and a non-negative
function $v$ such that
\begin{equation}\label{e5}
\begin{cases}
   \big(\p_{t}+\tilde{\Ac}_{t}\big) v(t,z)=0,\qquad &  (t,z)\in [T-\t,T[\times D(z_{0}, r),\\
  v(t,z)\ge 1,&  (t,z)\in [T-\t,T[\times \p D(z_{0}, r),
\end{cases}
\end{equation}
and
\begin{equation}\label{e6}
  0< v(t,z)\le C e^{-\frac{r^{2}}{C\sqrt{M(T-t)}}},\qquad  (t,z)\in [T-\t,T[\times D(z_0,\d^2 r),
\end{equation}
where the positive constant $C$ depends only on $\d,M_{0},\e,T_{0},z_{0}$ and $d$.
Now, by \eqref{e5}, \eqref{e6}, and by the limit in \eqref{ae21} together with the bound \eqref{ae28}, one has
\begin{equation}
\liminf_{(t,z)\to (\bar{t},\bar{z})\atop (t,z)\in [T-\tau,T[\times D(z_0,r)} \big(\Cdt v -
{w_{q,m}}\big)(t,z) \geq 0, \qquad (\bar{t},\bar{z})\in  \big( \{T\}\times D(z_0,r)
\big)\cup\big([T-\tau,T[\times \partial D(z_0,r)\big).
\end{equation}
Therefore, the maximum principle yields
 $$
 \left|{w_{q,m}}(t,z)\right| \leq \Cdt
 v(t,z),\qquad (t,z)\in [T-\t,T[\times D(z_0,r),
 $$
and eventually, \eqref{eq:ste43} stems from \eqref{e6}.
\end{proof}
\begin{proof}[of Theorem \ref{t2}]
We only prove the statement for $2\leq m\leq \hat{N}$, being the other cases simpler. Throughout
the proof, we denote by $C$ every positive constant that depends at most on $r,z_{0},\d,d,
M_0,\e,N,T_0$ and on $\Cdt,\Cdb $ in \eqref{ae28} and \eqref{se24}.

\medskip\noindent
\emph{Step 1.}
We fix $T\in ]0,T_0[$ and prove that
\begin{equation}\label{ae22}
  \left|{w_{q,m}}(t,z;T,k)\right| \leq C,\qquad (t,z)\in \Hb(T,z_{0},\d^3 r),\ k\in B(x_0,\d^3 r),
\end{equation}
where ${w_{q,m}}:={\partial_T^q } \partial_k^m ( u- \tilde{u})$ and
\begin{equation}\label{eq:ste50}
 \tilde{u}(t,z;T,k):=\int_{k}^{\infty} \int_{\R^{d-1}} \tilde{\G}(t,z;T,\xi,\y)
 \left( e^{\xi}-e^{k}\right) d \xi d \y, \qquad (t,z)\in [0,T[\times \R^{d}.
\end{equation}
Differentiating formula \eqref{ae20} and recalling \eqref{ae90} and \eqref{ae91}, we get
\begin{align}\label{eq:ste_estim_dk_u}
 {\partial_T^q }\p_{k}^{m} u(t,z;T,k) &=\sum_{i=1}^{4}(-1)^{i}U_{{i,q,m,\d}}(t,z;T,k).
\end{align}
Analogously, differentiating \eqref{eq:ste50} we obtain
\begin{equation}\label{ae25}
 {\partial_T^q } \partial_k^m \tilde{u}(t,z;T,k) = -e^k \int_k^{\infty} \int_{\R^{d-1}} {\partial_T^q } \tilde{\G}(t,z;T,\xi,\y) d \xi  d \y +  \sum_{j=1}^{m-1} \binom{m-1}{j} e^{k}
 \int_{\R^{d-1}} {\partial_T^q }\partial_{k}^{j-1}  \tilde{\G}(t,z;T,k,\y) d \y.
\end{equation}
Thus we have
 \begin{align}
 \left|{w_{q,m}}(t,z;T,k)\right| & \leq
 C\bigg( 1+U_{{4,q,m,\d^2}}(t,z;T,k)+\sum_{j=1}^{m-1}\left(J_{{1,q,j,\d^2}}+J_{{2,q,j,\d^2}}\right)(t,z;T,k)\bigg)\\
 & \leq
 C\bigg( 1+\sum_{j=1}^{m-1}\left(J_{{1,q,j,\d^2}}+J_{{2,q,j,\d^2}}\right)(t,z;T,k)\bigg)&& (\text{by \eqref{se24}})
 \end{align}
for any $ k\in B(x_0,\d^2 r)$ and $(t,z)\in \Hb(T,z_{0},\d^3 r)$,
where
\begin{align}
 J_{{1,q,j,\d^2}}(t,z;T,k)&=\int_{|\y-y_{0}|<\d^2 r}\left|{\partial_T^q }\partial_{k}^{j-1} (\G - \tilde{\G})(t,z;T,k,\y)\right| d \y,\\
 J_{{2,q,j,\d^2}}(t,z;T,k)&=\int_{
 |\y-y_{0}|\ge \d^2 r}\left|{\partial_T^q }\partial_{k}^{j-1}\tilde{\G}(t,z;T,k,\y)\right| d  \y.
\end{align}
Now, by applying Lemma \ref{lemm:ste_dens} and standard Gaussian estimates on the functions
$J_{{1,q,j,\d^2}}$ and $J_{{2,q,j,\d^2}}$ respectively, we obtain that {the latter} 
are bounded by a constant $C$ for any $ k\in B(x_0,\d^2 r)$ and $(t,z)\in \Hb(T,z_{0},\d^3 r)$. This proves
\eqref{ae22}.

\medskip\noindent
\emph{Step 2.} Fix now $(T,k)\in ]0,T_{0}]\times B(x_0,\d^3)$. Clearly,
$\tilde{u}(\cdot,\cdot;T,k)$ in \eqref{eq:ste50} is a classical solution to the Cauchy problem
\begin{equation}
 \begin{cases}
 \big(\p_t + \tilde{\Ac}_{t}\big)\tilde{u}(\cdot,\cdot;T,k)=  0,\qquad  &\text{on }  [0,T[\times \R^{d},
 \\
 \tilde{u}(T,x,y;T,k)=  \left(e^x - e^k \right)^{+},& (x,y) \in \R^d.
\end{cases}
\end{equation}
We set $h(t,z;k):=\left(u-\tilde{u}\right)(t,z;T,k)$ and notice that, by Remark \ref{ra7}-(iii), we have
\begin{equation}\label{ae26}
 \big(\p_t + \tilde{\Ac}_{t}\big)h(\cdot,\cdot;k)= 0,\qquad \text{on } \Hb(T,z_{0},r),
\end{equation}
because $\Ac_{t}$ and $\tilde{\Ac}_{t}$ coincide on $\Hb(T_{0},z_{0},r)$; moreover, we have
\begin{equation}\label{ae26bis}
 h(T,z;k)= 0,\qquad z\in D(z_{0},r).
\end{equation}
Now, by estimate \eqref{ae22} the derivatives ${\partial_T^q}\p_{k}^{m}h={w_{q,m}}$ are bounded on
$\Sigma(T,z_{0},\d^{3}r)$ for $k\in B(x_0,\d^3)$. Then, from Lemma \ref{lemma:zerolimit} applied
to $h$ on $\Hb(T,z_{0},\d^{3} r)$, we infer
\begin{equation}\label{ae26ter}
 \lim\limits_{(t,z)\to (T,\bar{z})\atop t<T} {w_{q,m}}(t,z;T,k)=  0,\qquad \bar{z}\in D(z_{0},\d^{3} r).
\end{equation}
By differentiating \eqref{ae26}, we also have $\big(\p_t +
\tilde{\Ac}_{t}\big){w_{q,m}}(\cdot,\cdot;T,k)=0$ on $\Hb(T,z_{0}, \d^{2}r)$. Thus we can use the
same argument used in Part 2 of the proof of Lemma \ref{lemm:ste_dens}: precisely, we consider the
function $v$ satisfying \eqref{e5}-\eqref{e6} and, by the maximum principle, \eqref{ae26ter} and
\eqref{ae22} we infer
\begin{align}
 \left|{w_{q,m}}(t,z;T,k)\right| \leq \big\| {w_{q,m}}(\cdot,\cdot;T,k)\big\|_{L^{\infty}(\Sigma(T,z_0,\d^{3} r))}
 e^{-\frac{r^{2}}{C\sqrt{M(T-t)}}},\qquad (t,z) \in \Hb(T,z_{0},\d^{4}r).
\end{align}
Eventually, by the triangular inequality we get $$ \left|\p_{k}^{m}(u-\bar{u}_{N})\right|\le
\left|{w_{q,m}}\right|+\left|\p_{k}^{m}(\tilde{u}-\bar{u}_{N})\right| \le
  C  e^{-\frac{r^{2}}{C\sqrt{M(T-t)}}}+ \left|\p_{k}^{m}(\tilde{u}-\bar{u}_{N})\right|,
  \qquad \text{ on } \Hb(T,z_{0},\d^4 r),
$$ and the statement follows from the asymptotic estimate of Theorem
\ref{th:error_estimates_taylor} applied to the uniformly parabolic operator
$\big(\p_{t}+\tilde{\Ac}_{t}\big)$.
\end{proof}

\section{Error estimates and Taylor formula of the implied volatility}\label{sec:error.impvol}
In this section we establish error estimates for the $N$-th order implied volatility approximation
$\bar{\sigma}_{N}(t,x,y;T,k)$ in Definition \ref{def:taylor.sigma} and for its derivatives w.r.t.
$k$ and $T$. Such bounds are proved under the assumptions of Subsection
\ref{sec:price.error.local} and are valid in the \emph{parabolic} domain $|x-k|\leq
\lam\sqrt{M(T-t)}$, for {any $\lam>0$ and suitably} small time-to-maturity $(T-t)$, with $M$ being
the local-ellipticity constant in Assumption \ref{assum2and2}. We recall that $N,\hat{N}\in\N_0$
are fixed throughout the paper and such that $N\geq 2$ and $\hat{N}\leq N$. Moreover
$z_0=(x_0,y_0)\in\R\times\R^{d-1}$ is the center of the cylinder in Assumptions \ref{assum2and2}
and \ref{exder}.
\begin{theorem}\label{th:IV_error}
Let $d= 1$ ($d\geq 2$) and let the assumptions of Theorem \ref{t2a} (Theorem \ref{t2}) be in
force. Then, for any $\lam>0$ and $m,{q}\in\N_0$ with {$2q+m\leq \hat{N}$}, there exist two
positive constants $C$ and $\tau_0$ such that
\begin{equation}\label{eq:IV_estimate}
 \left| {\partial_T^q} \partial_k^m \sigma(t,x_0,y_0;T,k)-  {\partial_T^q} \partial_k^m\bar{\sigma}_{N}(t,x_0,y_0;T,k) \right| \leq C 
 {M^{q+\frac{1}{2}}}\big(M(T-t)\big)^{\frac{N-m-{2q}+1}{2}},
\end{equation}
for any $0\leq t<T<T_0$ and $k$ such that $T-t\leq \tau_0$ and
$|x_0-k|\leq \lam \sqrt{M(T-t)}$. The constants $C$ and $\tau_0$
depend only on $r,z_0,d, M_0,\e,N,T_0,\lam$ and, if both $d,\hat{N}\geq 2$, also on $\delta$ and the constants $\Cdt $ and $\Cdb $ in \eqref{ae28} and \eqref{se24}. In particular, $C$ and $\tau_0$ are independent of $M$.
\end{theorem}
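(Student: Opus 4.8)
The plan is to invert the Black--Scholes pricing function and transfer the price-error estimates of Theorem~\ref{t2} (Theorem~\ref{t2a} when $d=1$) to the implied volatility, keeping careful track of the parabolic scaling $|x_0-k|\le\lambda\sqrt{M(T-t)}$ and of the half power of $M(T-t)$ that is lost upon dividing by the Black--Scholes vega. All quantities are evaluated at the centre $z_0=(x_0,y_0)$ of the cylinder of Assumptions~\ref{assum2and2} and \ref{exder}, so Theorems~\ref{t2a}/\ref{t2} apply directly with $z=z_0$, and for $T-t\le\tau_0$ small one has $|k-x_0|<\delta^4 r$, so $k$ is in the admissible range.

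The first step is an algebraic identity relating $\bar\sigma_N$ and $\bar u_N$. Since $\bar\sigma_N=\sum_{n=0}^N\sigma_n^{(x_0,y_0)}$ is exactly the degree-$N$ Taylor polynomial at $\delta=0$ of the function $g(\delta)$ in \eqref{eq:matt} (well defined by Lemma~\ref{lem:imp_vol_u_delta}), and $u^{\BS}(\cdot;T-t,x_0,k)$ is smooth near $\sigma_0>0$, the composition $\delta\mapsto u^{\BS}\!\big(\sum_{n=0}^N\delta^n\sigma_n;T-t,x_0,k\big)$ has the same degree-$N$ Taylor polynomial at $\delta=0$ as $u^{\BS}(g(\delta))=u(\delta)$, namely $u^{\BS}(\sigma_0)+\sum_{n=1}^N\delta^n u_n$ (cf.\ \eqref{eq:u.expand.again}--\eqref{ae8_bis}). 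Evaluating at $\delta=1$ and using the integral form of Taylor's remainder yields
\begin{equation*}
 u^{\BS}\big(\bar\sigma_N;T-t,x_0,k\big)=\bar u_N(t,x_0,y_0;T,k)+\rho_N(t,x_0,y_0;T,k),
\end{equation*}
where, by the Fa\`a di Bruno formula (Proposition~\ref{prop:multivariate_faa}), $\rho_N$ is a finite combination of products of \emph{at least two} of the correction terms $\sigma_1^{(x_0,y_0)},\dots,\sigma_N^{(x_0,y_0)}$ with the volatility-sensitivities $\partial_\sigma^h u^{\BS}(\sigma_0;\cdot)$.

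Subtracting this from the defining relation $u^{\BS}(\sigma;T-t,x_0,k)=u(t,x_0,y_0;T,k)$ (Definition~\ref{def:BS-IV}) and then applying $\partial_T^q\partial_k^m$ — via the chain rule and Fa\`a di Bruno, recalling that $u^{\BS}$ depends on $T$ through $\tau=T-t$ and on $k$ both directly and through the volatilities — one obtains an identity of the form
\begin{equation*}
 \partial_\sigma u^{\BS}(\sigma_0;\cdot)\,\partial_T^q\partial_k^m(\sigma-\bar\sigma_N)=\partial_T^q\partial_k^m(u-\bar u_N)-\partial_T^q\partial_k^m\rho_N+R_{q,m},
\end{equation*}
where $R_{q,m}$ gathers terms containing $\partial_T^{q'}\partial_k^{m'}(\sigma-\bar\sigma_N)$ with $2q'+m'<2q+m$, multiplied by Black--Scholes sensitivities and by (bounded, polynomially controlled via Appendix~\ref{app:un_represent_theorem}) derivatives of $\bar\sigma_N$. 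This sets up an induction on $2q+m$, the base case $q=m=0$ following from the mean value theorem in the volatility variable (vega being strictly positive).

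To close the induction I would invoke the short-time/small-volatility Black--Scholes estimates of Appendix~\ref{app:BS_greeks}: in the parabolic region $|x_0-k|\le\lambda\sqrt{M(T-t)}$ and for $T-t\le\tau_0$, one has $\partial_\sigma u^{\BS}(\sigma;\cdot)\ge c\,e^{x_0}\sqrt{T-t}$ uniformly for $\sigma$ in a neighbourhood of $\sigma_0$, together with upper bounds of the correct parabolic homogeneity on the higher $\sigma$-, $\tau$- and $k$-sensitivities; combined with the known size of the $\sigma_n$'s (polynomials in $k-x_0$ of order $n$ in the parabolic counting) this bounds $\partial_T^q\partial_k^m\rho_N$ by $C\,e^{x_0}M^q(M(T-t))^{\frac{N-m-2q+2}{2}}$. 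The term $\partial_T^q\partial_k^m(u-\bar u_N)$ has the same bound by Theorems~\ref{th:error_estimates_taylor}/\ref{t2a}/\ref{t2}, and $R_{q,m}$ by the inductive hypothesis times the greek bounds. Dividing through by $\partial_\sigma u^{\BS}(\sigma_0;\cdot)$ gains the factor $M^{1/2}$ and drops one half-power of $M(T-t)$, producing exactly \eqref{eq:IV_estimate}; the threshold $\tau_0$ (depending on $\lambda$) is what keeps the intermediate volatility in the region where the vega lower bound holds and keeps $\sigma_0^2(T-t)$ bounded. The main obstacle is precisely this bookkeeping: verifying, uniformly over the parabola up to its boundary $|x_0-k|=\lambda\sqrt{M(T-t)}$, that the remainder $\rho_N$ and the cross-terms generated by Fa\`a di Bruno genuinely match the order of the price error (which needs the sharp parabolic homogeneity of the Black--Scholes sensitivities, not crude bounds), and that every constant stays independent of the ellipticity constant $M$, as already arranged in the price estimates.
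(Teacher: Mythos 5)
Your proposal is correct in its main lines, but it travels a genuinely different route from the paper. The paper never introduces the pushed-forward price $u^{\BS}(\bar\sigma_N)$; instead it works with $g(\delta)=\GG\bigl(u(\delta,k,T),k,T\bigr)$ (where $\GG=(u^{\BS})^{-1}$), writes $\sigma-\bar\sigma_N=\frac{1}{(N+1)!}\partial_\delta^{N+1}g(\bar\delta)$ by the Lagrange form of Taylor's remainder, expands that $(N+1)$-th $\delta$-derivative via Fa\`a di Bruno into products of $\partial_1^h\GG$ evaluated at $u(\bar\delta)$ and of $\partial_\delta^j u(\bar\delta)$, and only then applies $\partial_T^q\partial_k^m$ and invokes the two preparatory Lemmas \ref{l1_bis} and \ref{l1} (which bound mixed derivatives of $\GG$, and are themselves proved by induction). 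Thus the paper pushes the induction into the lemmas that control the \emph{inverse} map $\GG$, and the theorem-level argument is a single closed-form expansion with no induction on $(q,m)$. You instead compare \emph{prices}: the identity $u^{\BS}(\bar\sigma_N)=\bar u_N+\rho_N$ is a nice algebraic observation that the paper does not use, and you then differentiate $u^{\BS}(\sigma)-u^{\BS}(\bar\sigma_N)=(u-\bar u_N)-\rho_N$ and solve implicitly for $\partial_T^q\partial_k^m(\sigma-\bar\sigma_N)$ by induction on $2q+m$. Both routes ultimately rest on the same three pillars — the price-error estimates of Theorems \ref{th:error_estimates_taylor}/\ref{t2a}/\ref{t2}, the sharp parabolic-homogeneous Black--Scholes sensitivity bounds of Appendix \ref{app:BS_greeks}, and the size estimates \eqref{ste:equ11} / Corollary \ref{lem:ste10} for the expansion terms. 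Your approach is arguably more transparent (it never requires explicit control of the inverse function and its mixed $(u,k,T)$-derivatives), but the bookkeeping is heavier: as you observe, one must verify that every cross-term from Fa\`a di Bruno has residual factors of strictly lower parabolic order, and there is one extra wrinkle you glossed over — replacing the natural prefactor $\partial_\sigma u^{\BS}(\sigma)$ by $\partial_\sigma u^{\BS}(\sigma_0)$ leaves a term $\bigl(\partial_\sigma u^{\BS}(\sigma)-\partial_\sigma u^{\BS}(\sigma_0)\bigr)\partial_T^q\partial_k^m(\sigma-\bar\sigma_N)$ of the \emph{same} order, so one must absorb it by a bootstrap using that $\partial_\sigma u^{\BS}(\sigma)-\partial_\sigma u^{\BS}(\sigma_0)=O\bigl(\sqrt{M(T-t)}\bigr)\,\partial_\sigma u^{\BS}(\sigma_0)$ for $T-t\le\tau_0$, which Lemma \ref{lem:imp_vol_u_delta} and \eqref{ae4} make available. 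The paper avoids this by estimating $\GG$ once and for all. Both approaches deliver the same $M$-independent constants and the same threshold mechanism via $\tau_0$.
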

Before proving Theorem \ref{th:IV_error}, we show the following remarkable corollary which is the
main result of the paper.
\begin{corollary}
{Let the assumptions of Theorem \ref{th:IV_error} hold and, for simplicity, assume $N=\hat{N}$.}
Then for any ${q},m\in\N_0$ with $2{q}+m\leq N$, the two limits
\begin{align}\label{eq:limit_one}
 {\partial_T^q} \partial_k^m  \bar{\sigma}_N(t,x_0,y_0;t,x_0)&:= \lim_{(T,k)\to (t,x_0)\atop |x_0-k|\leq \lam \sqrt{T-t}} {\partial_T^q} \partial_k^m  \bar{\sigma}_N(t,x_0,y_0;T,k),\\ \label{eq:limit_two}
 {\partial_T^q} \partial_k^m  \sigma(t,x_0,y_0;t,x_0)&:= \lim_{(T,k)\to (t,x_0)\atop |x_0-k|\leq \lam \sqrt{T-t}} {\partial_T^q} \partial_k^m  \sigma(t,x_0,y_0;T,k),
\end{align}
{exist, are finite and} coincide for any $\lam>0$ and $t\in[0,T_0[$. {Consequently, we have the}
following {parabolic} $N$-th order Taylor expansion:
\begin{equation}\label{eq:Tay_form_intro}
 {\s(t,x_{0},y_{0};T,k)} = \sum_{2 {q}+m\leq N} \frac{{(T-t)^q} {(k-x_{0})}^m}{{q!}m!}\, {\partial_T^q} \partial_k^m  \bar{\sigma}_{N}(t,x_0,y_0;t,x_0) + R_N(t,x_0,y_0,T,k),
\end{equation}
with $$
 R_N(t,x_0,y_0,T,k)=\text{\rm o}\left({|T-t|}^{\frac{N}{2}}+|k-x_0|^N\right),\qquad \text{as }(T,k)\to(t,x_0)\text{ with } |x_0-k|\leq \lam \sqrt{T-t}.
$$
\end{corollary}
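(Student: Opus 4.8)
The plan is to read the corollary off from Theorem~\ref{th:IV_error} together with the explicit structure of the proxy $\bar\sigma_N$ recalled in Section~\ref{sec:impvol}. Since we assume $N=\hat N$, for every $m,q\in\N_0$ with $2q+m\le N$ the exponent $\tfrac{N-m-2q+1}{2}$ in \eqref{eq:IV_estimate} is at least $\tfrac12$, so Theorem~\ref{th:IV_error} gives
\[
 \big|\partial_T^q\partial_k^m(\sigma-\bar\sigma_N)(t,x_0,y_0;T,k)\big|\ \le\ C\,M^{q+\frac12}\big(M(T-t)\big)^{\frac{N-m-2q+1}{2}}\ \longrightarrow\ 0
\]
as $(T,k)\to(t,x_0)$ with $|x_0-k|\le\lam\sqrt{T-t}$ (this parabola being, $M$ being a fixed constant, contained in one of the form $|x_0-k|\le\lam'\sqrt{M(T-t)}$). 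Hence the limit \eqref{eq:limit_two} exists and is finite \emph{if and only if} the limit \eqref{eq:limit_one} does, and when they exist they coincide; moreover, being obtained as a restricted limit of a fixed function, the value is independent of $\lam$. This reduces the first assertion of the corollary to showing that $\partial_T^q\partial_k^m\bar\sigma_N(t,x_0,y_0;T,k)$ has a finite limit as $(T,k)\to(t,x_0)$ along the parabola.

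For this I would invoke the representation of the proxy. By Proposition~\ref{prop.un} (see also Theorem~\ref{th:un_general_repres}, the representation theorem of Appendix~\ref{app:un_represent_theorem}, and \cite{LPP2}), for each $0\le n\le N$ the term $\sigma_n^{(x_0,y_0)}(t,x_0,y_0;T,k)$ is a polynomial in the log-moneyness $(k-x_0)$ whose coefficients are explicit functions of $(t,T)$, built from iterated time-integrals over $[t,T]$ of the partial derivatives of $a_{ij},a_i$ evaluated at $z_0$ and from positive and negative powers of $\sigma_0^{(x_0,y_0)}(t,T)=\big(\tfrac1{T-t}\int_t^T a_{11}(\tau,z_0)\,d\tau\big)^{1/2}$; crucially, the Black\&Scholes Vega denominators appearing in \eqref{eq:sig.n_bis} produce, through the Fa\`a di Bruno recursion, no surviving negative power of $T-t$. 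By Assumption~\ref{assum2and2}(i) the coefficients $a_{ij},a_i$ belong to $C^{N+1}_P$, and by Assumption~\ref{assum2and2}(ii) one has $\sigma_0^{(x_0,y_0)}>0$, so each of these building blocks extends, as a function of $T$ (with $t$ fixed), to a function of class $C^{\lceil N/2\rceil}$ on some interval $[t,t+\tau_0]$, the square root and the divisions being harmless. Consequently $\bar\sigma_N(t,x_0,y_0;T,k)$ extends to a function of $(T,k)$ that is of class $C^{\lceil N/2\rceil}$ in $T$ and polynomial (a fortiori $C^{\infty}$) in $k$ up to the corner $(T,k)=(t,x_0)$, which yields the existence and finiteness of the limits \eqref{eq:limit_one}, hence of \eqref{eq:limit_two}.

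It remains to get the Taylor formula. Write $\bar\sigma_N(t,x_0,y_0;T,k)=\sum_{m\ge 0}P_m(t,T)\,(k-x_0)^m$ (a finite sum) with $P_m(t,\cdot)\in C^{\lceil N/2\rceil}([t,t+\tau_0])$, and Taylor-expand each $P_m$ in $T$ at $t$ to the order needed to retain all terms with $2q+m\le N$ (and one further order when $N-m$ is odd). Collecting these terms yields exactly $\sum_{2q+m\le N}\tfrac{(T-t)^q(k-x_0)^m}{q!\,m!}\,\partial_T^q\partial_k^m\bar\sigma_N(t,x_0,y_0;t,x_0)$, while the discarded monomials $(T-t)^q(k-x_0)^m$ with $2q+m>N$ and the Peano remainders of the one-dimensional expansions all satisfy, on $|k-x_0|\le\lam\sqrt{T-t}$, the bound $(T-t)^q|k-x_0|^m\le\lam^m(T-t)^{(2q+m)/2}=\text{o}\big((T-t)^{N/2}\big)$; therefore
\[
 \bar\sigma_N(t,x_0,y_0;T,k)=\sum_{2q+m\le N}\frac{(T-t)^q(k-x_0)^m}{q!\,m!}\,\partial_T^q\partial_k^m\bar\sigma_N(t,x_0,y_0;t,x_0)+\text{o}\big((T-t)^{N/2}+|k-x_0|^N\big)
\]
along the parabola. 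Adding the difference $(\sigma-\bar\sigma_N)(t,x_0,y_0;T,k)$, which by Theorem~\ref{th:IV_error} with $m=q=0$ is $O\big((T-t)^{(N+1)/2}\big)=\text{o}\big((T-t)^{N/2}\big)$ there, gives \eqref{eq:Tay_form_intro} with $R_N$ the sum of the two $\text{o}$-terms. The one genuinely delicate point is the one hidden in the second paragraph — that the a priori singular ratios \eqref{eq:sig.n_bis} define functions of $(T,k)$ which remain regular up to the corner $(t,x_0)$; this is exactly where the time-regularity built into Assumption~\ref{assum2and2} and the short-time/parabolic Black\&Scholes greek asymptotics of Appendix~\ref{app:BS_greeks} underpinning Proposition~\ref{prop.un} enter. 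Everything else is bookkeeping.
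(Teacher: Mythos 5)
Your argument matches the paper's: both invoke Theorem \ref{th:IV_error} to show the parabolic limits of $\partial_T^q\partial_k^m(\sigma-\bar\sigma_N)$ vanish, then appeal to the explicit representations in Theorem \ref{th:un_general_repres} and Proposition \ref{prop.un} to conclude $\bar\sigma_N(t,x_0,y_0;\cdot,\cdot)\in C_P^N$, which gives existence of the limit \eqref{eq:limit_one} and hence \eqref{eq:limit_two}. The only difference is that you spell out the bookkeeping for the Taylor remainder (Taylor-expanding the polynomial coefficients in $T$, bounding discarded monomials $(T-t)^q|k-x_0|^m$ with $2q+m>N$ on the parabola, and absorbing $\sigma-\bar\sigma_N=O((T-t)^{(N+1)/2})$), a step the paper states as an immediate "consequence" without detail; your account is correct and fills that in.
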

\begin{proof}
By Theorem \ref{th:IV_error}, we have
\begin{equation}\label{eq:IV_limit_bis}
 \lim_{(T,k)\to (t,x_0)\atop |x_0-k|\leq \lam \sqrt{T-t}} {\partial_T^q} \partial_k^m  \big(\sigma-\bar{\sigma}_N\big)(t,x_0,y_0;T,k)  = 0,\qquad
 t\in[0,T_0[,\quad \lam>0,
\end{equation}
for any ${q},m\in\N_0$ with $2{q}+m\leq N$. Therefore, the limit in \eqref{eq:limit_one}
{converges} if and only if the limit \eqref{eq:limit_two} {converges} and {in that case they
coincide.} {Now, by the representation formulas in Theorem \ref{th:un_general_repres} and
Proposition \ref{prop.un}, $\bar{\sigma}_N(t,x_0,y_0;\cdot,\cdot)\in C_{P}^{N}\left([0,T_0[\times
\R\right)$ and thus the limit in \eqref{eq:limit_two} converges. }
\end{proof}
\begin{remark}
The derivatives appearing in the Taylor formula \eqref{eq:Tay_form_intro} can be computed
explicitly (possibly with the aid of a symbolic computation software) by means of the
representation formulas of
Theorem \ref{th:un_general_repres} and Proposition \ref{prop.un}. 
\end{remark}
\begin{remark}
A direct computation shows that, at order $N=0$, formula \eqref{eq:Tay_form_intro} is consistent
with the well-known results by \cite{berestycki2002asymptotics} and
\cite{berestycki-busca-florent}. Furthermore, again by direct computation, one can check that in
the special case $d=1$, formula \eqref{eq:Tay_form_intro} with {$q=0$} and $m=1$ is consistent
with the well-known practitioners' \emph{$1/2$ slope rule}, according to which the at-the-money
slope of the implied volatility is one half the slope of the local volatility function.
\end{remark}

The rest of the section is devoted to the proof of Theorem \ref{th:IV_error}. Hereafter $\lam> 0$
is fixed and we assume the hypotheses of Theorem \ref{th:IV_error} to be in force. In particular,
{the center $z_0=(x_0,y_0)$} of the cylinder $H(T_0,z_0,r)$ in Assumptions \ref{assum2and2} and
\ref{exder} is fixed from now on.
\begin{notation}\label{notation:ste}
If not explicitly stated, $C$ and $\tau_0$ will always denote two positive constants dependent at
most on $\lam$, on $r,z_0,d, M_0,\e,N,T_0,\d$ appearing in Assumptions \ref{assum1i},
\ref{assum1ii},  and, only if both $\hat{N},d\geq 2$, also on $\Cdt,\Cdb$ in \eqref{ae28} and
\eqref{se24}.
Note that, in particular, {\it neither $C$ nor $\tau_0$ do depend on $M$}.
\end{notation}

{The proof of Theorem \ref{th:IV_error} is based on some preliminary results.
\begin{lemma}\label{landre10} For any positive constants $c,\bar{\s},\lam,\m$ with $\m<1$, there exists a positive
$\bar{\t}$ {only dependent on $c,\bar{\s},\lam,\m$}, such that
\begin{align}\label{andre6}
  u^{\BS}(\m \s;\t,x,k)+c e^{x}\s^2\t \le u^{\BS}(\s;\t,x,k) ,
\end{align}
for any $\t\in[0,{\bar{\t}}]$, $ \s\leq \bar{\s}$ and $|x-k|\le \lam\s \sqrt{\t}$.
\end{lemma}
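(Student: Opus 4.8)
The plan is to combine the strict monotonicity of $u^{\BS}$ in its volatility argument with a uniform lower bound on the Black\&Scholes vega that becomes available precisely because of the parabolic restriction $|x-k|\le\lam\s\sqrt{\t}$. It suffices to treat $\s>0$ and $\t>0$: when $\t=0$ the constraint forces $x=k$ and both sides of \eqref{andre6} vanish, and the limiting case $\s\to 0^{+}$ then follows by continuity.

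First I would rewrite the left-hand gap as an integral of the vega. Using the classical identity $\partial_{\s}u^{\BS}(s;\t,x,k)=\dfrac{e^{x}\sqrt{\t}}{\sqrt{2\pi}}\,e^{-d_{+}(s)^{2}/2}$, with $d_{+}(s)=\dfrac{x-k}{s\sqrt{\t}}+\dfrac{s\sqrt{\t}}{2}$, the fundamental theorem of calculus yields
\begin{equation*}
u^{\BS}(\s;\t,x,k)-u^{\BS}(\m\s;\t,x,k)=\int_{\m\s}^{\s}\frac{e^{x}\sqrt{\t}}{\sqrt{2\pi}}\,e^{-d_{+}(s)^{2}/2}\,d s .
\end{equation*}
The key step is then the following uniform bound on the argument of the Gaussian: whenever $|x-k|\le\lam\s\sqrt{\t}$, $s\in[\m\s,\s]$, $\s\le\bar{\s}$ and $\t\le 1$, we have $|d_{+}(s)|\le\dfrac{\lam\s}{s}+\dfrac{s\sqrt{\t}}{2}\le\dfrac{\lam}{\m}+\dfrac{\bar{\s}}{2}=:K_{0}$, a constant that crucially does not depend on $\bar{\t}$. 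Hence the integrand is at least $\dfrac{e^{x}\sqrt{\t}}{\sqrt{2\pi}}e^{-K_{0}^{2}/2}$, and integrating over an interval of length $(1-\m)\s$ gives $u^{\BS}(\s;\t,x,k)-u^{\BS}(\m\s;\t,x,k)\ge\dfrac{(1-\m)\,\s\sqrt{\t}}{\sqrt{2\pi}}\,e^{x}\,e^{-K_{0}^{2}/2}$.

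It remains to compare this lower bound with $c\,e^{x}\s^{2}\t$. Dividing by $e^{x}\s\sqrt{\t}>0$, inequality \eqref{andre6} holds as soon as $\dfrac{1-\m}{\sqrt{2\pi}}e^{-K_{0}^{2}/2}\ge c\,\s\sqrt{\t}$; since $\s\sqrt{\t}\le\bar{\s}\sqrt{\bar{\t}}$, it is enough to pick $\bar{\t}\le 1$ small enough that $c\,\bar{\s}\sqrt{\bar{\t}}\le\dfrac{1-\m}{\sqrt{2\pi}}e^{-K_{0}^{2}/2}$, for instance $\bar{\t}=\min\{1,\,(1-\m)^{2}e^{-K_{0}^{2}}/(2\pi c^{2}\bar{\s}^{2})\}$, which depends only on $c,\bar{\s},\lam,\m$, as claimed. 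There is no substantial obstacle; the only point that must be watched — and the reason one imposes $\bar{\t}\le 1$ before anything else — is that the elementary bound on $d_{+}(s)$ involves $\sqrt{\bar{\t}}$, so $\bar{\t}$ has to be capped first in order that $K_{0}$, and hence the final threshold for $\bar{\t}$, be genuinely independent of $\bar{\t}$.
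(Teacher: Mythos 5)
Your proof is correct and follows essentially the same route as the paper's: both express the gap $u^{\BS}(\s;\t,x,k)-u^{\BS}(\m\s;\t,x,k)$ as an integral of the vega over $[\m\s,\s]$, bound the exponent uniformly using $|x-k|\le\lam\s\sqrt{\t}$ and $s\ge\m\s$, and then shrink $\bar{\t}$ so the resulting lower bound dominates $ce^{x}\s^{2}\t$. Your version merely makes the constants explicit and caps $\bar{\t}\le 1$ at the outset, which is a clean way to pin down the paper's implicit observation that the exponent bound is uniform.
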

\proof We recall the following expression for the Black\&Scholes price (see, for instance,
\cite{roper2009relationship}):
\begin{align}\label{andre1}
  u^{\BS}(\s;\t,x,k)&=\left(e^{x}-e^{k}\right)^{+}+
  e^{x}\sqrt{\frac{\t}{2\pi}}\int_{0}^{\s}e^{-\frac{1}{2}\left(\frac{x-k}{w\sqrt{\t}}+\frac{w\sqrt{\t}}{2}\right)^{2}}dw.
  \end{align}
Then we have
\begin{align}
 u^{\BS}(\s;\t,x,k)-u^{\BS}(\m\s;\t,x,k)
    &=
    e^{x}\sqrt{\frac{\t}{2\pi}}\int_{\m\s}^{\s}e^{-\frac{1}{2}\left(\frac{x-k}{w\sqrt{\t}}+\frac{w\sqrt{\t}}{2}\right)^{2}}dw\ge
\intertext{(by using $|x-k|\le \lam \s \sqrt{\t}$ and $\s\leq \bar{\s}$)}
    &\ge
        e^{x}\sqrt{\frac{\t}{2\pi}}e^{-\frac{1}{2}\left(\frac{\lam}{\m}+\frac{\bar{\s}\sqrt{\t}}{2}\right)^{2}}\s(1-\m)\ge c e^{x}\s^2\t,
\end{align}
for any $\t\in[0,{\bar{\t}}]$ where $\bar{\t}$ is positive and suitably small constant, depending
only on $c,\lam,\bar{\s}$ and $\m$.
\endproof
}

\begin{notation}\label{notat1}
Sometimes, in order to simplify the notation, we will use the shortcuts
\begin{align}
 &{\FF(\s,k,T)}:=u^{\BS}(\s;T-t,\xfixed,k),\qquad &\s>0,\quad k\in\R,\quad {T\ge t,}\\
 &{\GG(u,k,T)}:=\left(u^{\BS}(\cdot;T-t,\xfixed,k)\right)^{-1}(u)\qquad &u\in ](e^{\xfixed}-e^{k})^{+},e^{\xfixed}[,\quad
 k\in\R,\quad {T\ge t,}
\end{align}
for the Black\&Scholes price and its inverse function with respect to the volatility variable. To
ease notations, for any function $F$ of {three} variables $z_{1},z_{2},z_3$, we also set
$\p_{i}F=\frac{\p F}{\p z_{i}}$, $i=1,2{,3}$. Derivatives of compositions of $\FF$ and $\GG$ will
be expressed according this notation: for example, first order derivatives are given by
\begin{align}
 \label{aae8}
 \frac{d}{dk}\FF\left({\GG(u,k,T)},k{,T}\right)&=\left(\p_{1}\FF\right)\left({\GG(u,k,T)},k{,T}\right)\cdot \p_{2}{\GG(u,k,T)}+\left(\p_{2}\FF\right)\left({\GG(u,k,T)},k{,T}\right),\\
 \frac{d}{d T}\FF\left({\GG(u,k,T)},k{,T}\right)&=\left(\p_{1}\FF\right)\left({\GG(u,k,T)},k{,T}\right)\cdot {\p_{3} \GG(u,k,T)}+\left({\p_{3}}\FF\right)\left({\GG(u,k,T)},k{,T}\right) .
\end{align}
For any $\delta\in [0,1]$, we introduce the functions
\begin{align}\label{eq:def_u_delta}
 u(\delta,k{,T})\equiv u(\delta;t,\xfixed,\yfixed,T,k) & :=
 u^{\BS}\left(\s^{(\xfixed,\yfixed)}_{0}(t,T);T-t,\xfixed,k\right)+R(\delta;t,\xfixed,\yfixed,T,k),\\ \label{eq:def_u_delta_bis}
 R(\delta,k{,T})\equiv R(\delta;t,\xfixed,\yfixed,T,k) &:=\sum_{n=1}^N \delta^n u^{(\xfixed,\yfixed)}_{n}(t,\xfixed,\yfixed;T,k) + \delta^{N+1} \left(u-\bar{u}_N\right)(t,\xfixed,\yfixed;T,k),
\end{align}
Recall that $\s^{(\xfixed,\yfixed)}_{0}(t,T)$ and
$u^{(\xfixed,\yfixed)}_{n}(t,\xfixed,\yfixed;T,k)$ are defined for any $0\leq t<T\leq T_0$ and
$k\in\R$, as indicated by \eqref{e10} and \eqref{eq:v.n.pide} respectively. Consequently, by
Theorem \ref{t2} and by Corollary \ref{lem:ste10}, Eq. \eqref{eq:ste36c}, there exist $C$ and
$\tau_0$ as in Notation \ref{notation:ste} such that
\begin{align}\label{aae1}
  \left|R(\delta,k{,T})\right|&\le Ce^{\xfixed}M\left(T-t\right), \qquad
\intertext{and, for any ${q,}m,h\in\N_0$ and $j\in\N$, with ${q+}m+h>0$, $h,j\leq N+1$ and
$m{+2q}\leq \hat{N}$, }\label{ste:equ11}
  \left|{\partial_T^q}\partial_k^m \left(\left(\partial_{\delta}^h u(\delta,k{,T})\right)^j\right)\right|
  &\leq Ce^{\xfixed}{M^{q}} (M(T-t))^{\frac{j(h+1)-m{-2q}}{2}},
\end{align}
for any $0\leq t<T< T_0$ and $k$ such that $T-t\leq \tau_0$ and
$|x_0-k|\leq \lam \sqrt{M(T-t)}$.
\end{notation}

{\begin{lemma}\label{lem:imp_vol_u_delta}
There exists a positive $\t_{0}$ as in Notation \ref{notation:ste}
such that
\begin{align}
u^{\BS}\big( \sqrt{\eps{{M}}};{T-t},\xfixed,k\big) \leq u(\del,k{,T})
     \leq u^{\BS}\big(\sqrt{4M};{T-t},\xfixed,k\big) , \label{eq:bounds}
\end{align}
or equivalently
\begin{align}\label{ae9}
 \sqrt{\eps{M}}\leq \big(u^{\BS}\big)^{-1}(u(\delta,k{,T});{T-t},\xfixed,k)\leq \sqrt{4M},
 \end{align}
for any $\delta\in[0,1]$, $0\leq t<T< T_0$ and $k\in\R$
such that $T-t\leq \tau_0$ and $|\xfixed-k|\leq \lam \sqrt{M(T-t)}$.
\end{lemma}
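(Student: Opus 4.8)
The plan is to exploit the decomposition
\begin{equation*}
 u(\del,k,T)=u^{\BS}\big(\s_{0};T-t,\xfixed,k\big)+R(\del,k,T),\qquad \s_{0}:=\s^{(\xfixed,\yfixed)}_{0}(t,T),
\end{equation*}
of Notation \ref{notat1}: the leading Black\&Scholes term gets pinned between the ellipticity bounds, while the remainder $R$, by \eqref{aae1}, is of size $O\big(e^{\xfixed}M(T-t)\big)$; the task is then to absorb that $O\big(e^{\xfixed}M(T-t)\big)$ into the gap between Black\&Scholes prices taken at slightly different volatilities, which is exactly the content of Lemma \ref{landre10}. First I would record that, since $z_0=(\xfixed,\yfixed)$ lies in the cylinder $\Hb(T_0,z_0,r)$ on which $\Ac_t$ agrees with the uniformly elliptic operator $\tilde\Ac_t$, the bound \eqref{cond:parabolicity} taken along the first coordinate direction gives $\eps M\le a_{11}(\t,\xfixed,\yfixed)\le M$ for $\t\in[t,T]$, hence by \eqref{e10} that $\sqrt{\eps M}\le\s_{0}\le\sqrt M$. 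Since $\s\mapsto u^{\BS}(\s;\t,x,k)$ is strictly increasing with range $](e^{x}-e^{k})^{+},e^{x}[$, this yields $u^{\BS}(\sqrt{\eps M};T-t,\xfixed,k)\le u^{\BS}(\s_0;T-t,\xfixed,k)\le u^{\BS}(\sqrt M;T-t,\xfixed,k)$, and by \eqref{aae1} there are constants $C,\t_0$ of the type described in Notation \ref{notation:ste} with $|R(\del,k,T)|\le Ce^{\xfixed}M(T-t)$ throughout the stated range of $(\del,t,T,k)$.

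For the upper bound I would apply Lemma \ref{landre10} with $\s:=\sqrt{4M}=2\sqrt M$ (admissible with $\bar\s:=2\sqrt{M_0}$, since $M\le M_0$), contraction factor $\mu:=\tfrac12$ so that $\mu\s=\sqrt M$, constant $c:=C/4$, and the given $\lam$; its hypothesis $|\xfixed-k|\le\lam\s\sqrt{T-t}$ holds because $|\xfixed-k|\le\lam\sqrt{M(T-t)}=\tfrac12\lam\s\sqrt{T-t}$. After shrinking $\t_0$ to the $\bar\t$ the lemma provides, one gets $u^{\BS}(\sqrt M;T-t,\xfixed,k)+Ce^{\xfixed}M(T-t)\le u^{\BS}(\sqrt{4M};T-t,\xfixed,k)$, whence
\begin{equation*}
 u(\del,k,T)=u^{\BS}(\s_0;T-t,\xfixed,k)+R\ \le\ u^{\BS}(\sqrt M;T-t,\xfixed,k)+Ce^{\xfixed}M(T-t)\ \le\ u^{\BS}(\sqrt{4M};T-t,\xfixed,k).
\end{equation*}
For the lower bound I would instead apply Lemma \ref{landre10} with $\s:=\s_0$ (admissible with $\bar\s:=\sqrt{M_0}$), a fixed $\mu\in\,]0,1[$, constant $c:=C/\eps$, and $\lam':=\lam/\sqrt\eps$; the hypothesis holds because $\sqrt M\le\s_0/\sqrt\eps$ gives $|\xfixed-k|\le\lam\sqrt{M(T-t)}\le\lam'\s_0\sqrt{T-t}$. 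Using $\s_0^2\ge\eps M$ to bound $ce^{\xfixed}\s_0^2(T-t)\ge Ce^{\xfixed}M(T-t)$, the lemma yields $u^{\BS}(\mu\s_0;T-t,\xfixed,k)+Ce^{\xfixed}M(T-t)\le u^{\BS}(\s_0;T-t,\xfixed,k)$, hence $u(\del,k,T)\ge u^{\BS}(\s_0;T-t,\xfixed,k)-|R|\ge u^{\BS}(\mu\s_0;T-t,\xfixed,k)$; since $\mu\s_0\ge\mu\sqrt{\eps M}$, choosing $\mu$ close enough to $1$ — and recalling that the constant $\eps$ in \eqref{eq:bounds} may be taken strictly below the ellipticity constant of \eqref{cond:parabolicity} — delivers $u(\del,k,T)\ge u^{\BS}(\sqrt{\eps M};T-t,\xfixed,k)$ after a final reduction of $\t_0$. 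This proves \eqref{eq:bounds}, and since the two chains of inequalities also force $u(\del,k,T)\in\,](e^{\xfixed}-e^{k})^{+},e^{\xfixed}[$, the strictly increasing continuous map $\big(u^{\BS}(\cdot\,;T-t,\xfixed,k)\big)^{-1}$ is defined at $u(\del,k,T)$ and applying it to \eqref{eq:bounds} gives \eqref{ae9}.

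The step I expect to be delicate is not either inequality in isolation but the bookkeeping that keeps $C$ and $\t_0$ independent of $M$. This is possible because the remainder estimate \eqref{aae1} scales like $e^{\xfixed}M(T-t)$ while Lemma \ref{landre10} is itself scale-adapted — its hypothesis isolates the parabola $|x-k|\le\lam\s\sqrt\t$ and its conclusion the quantity $e^{x}\s^{2}\t$, both invariant under $(\s,\t)\mapsto(\sqrt M\,\s,\t/M)$ — so that, up to this rescaling, the whole argument reduces to the case $M=1$ and $M$ enters the constants only through the harmless bound $M\le M_0$ (used purely to keep $\bar\s$ finite). The one point that requires extra care is calibrating $\mu$, $c$ and $\lam'$ in the lower bound so that precisely the volatility $\sqrt{\eps M}$, and not merely some fixed multiple of $\sqrt M$, appears on the left of \eqref{eq:bounds}.
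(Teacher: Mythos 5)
Your proposal follows exactly the route the paper takes: write $u(\del,k,T)=u^{\BS}(\s_0)+R$, bound $|R|\leq Ce^{\xfixed}M(T-t)$ via \eqref{aae1}, pin $\s_0$ between multiples of $\sqrt M$, and invoke Lemma~\ref{landre10} to absorb the remainder into the gap between two Black--Scholes prices. The upper bound works as you write it. The issue is the lower bound, and it is more than just ``calibration to be done.''

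You correctly compute from \eqref{cond:parabolicity} and \eqref{e10} that $\sqrt{\eps M}\leq\s_0\leq\sqrt{M}$. The paper instead asserts $\sqrt{2\eps M}\leq\s_0\leq\sqrt{2M}$ at the same point; that extra $\sqrt 2$ appears to be an error (or a hidden convention), but it is precisely what makes the paper's lower bound close. With your (correct) bound $\s_0\geq\sqrt{\eps M}$ there is \emph{no slack}: if $a_{11}(\cdot,z_0)\equiv\eps M$ then $\s_0=\sqrt{\eps M}$ exactly, and since $R$ can be negative, any argument that passes through $u(\del)\geq u^{\BS}(\s_0)-|R|$ can only deliver $u^{\BS}(\m\s_0)$ for some fixed $\m<1$, never $u^{\BS}(\sqrt{\eps M})$ itself. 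Your attempted remedy --- ``recalling that the constant $\eps$ in \eqref{eq:bounds} may be taken strictly below the ellipticity constant'' --- is not available: the $\eps$ in \eqref{ae9} \emph{is} the ellipticity constant of Assumption~\ref{assum2and2}, and the lemma is stated with that specific $\eps$. So as written your lower bound proves $u(\del)\geq u^{\BS}(\m\sqrt{\eps M})$ for some fixed $\m<1$, which is strictly weaker than \eqref{eq:bounds}.

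Two caveats that put this in perspective. First, the paper's own proof has the same soft spot: with $\m=\tfrac12$ as it claims, even $\s_0\geq\sqrt{2\eps M}$ gives only $\m\s_0\geq\sqrt{\eps M/2}$, so either $\m=1/\sqrt2$ is intended, or the $\sqrt 2$'s are off. Second, and more importantly, the precise constants $\sqrt{\eps M}$ and $\sqrt{4M}$ are never used sharply downstream --- all that Lemma~\ref{l1_bis}, Lemma~\ref{l1} and the proof of Theorem~\ref{th:IV_error} need is $c_1\sqrt M\leq(u^{\BS})^{-1}(u(\del))\leq c_2\sqrt M$ with $c_1,c_2>0$ independent of $M$. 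Your proof delivers exactly that (with $c_1=\m\sqrt\eps$ for some fixed $\m<1$ and $c_2=2$). So the right way to regard the gap is: you have essentially reproduced the paper's argument and its hidden imprecision; the conclusion you actually obtain is marginally weaker than \eqref{eq:bounds} as printed, but is fully sufficient for everything the lemma is used for. If you want to match \eqref{eq:bounds} verbatim you would need the paper's unsupported bound $\s_0\geq\sqrt{2\eps M}$ (giving slack to apply Lemma~\ref{landre10} with $\m=1/\sqrt 2$), not a reinterpretation of $\eps$.
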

\begin{proof}
Since $u(\delta,k{,T})-
u^{\BS}\left(\s^{(\xfixed,\yfixed)}_{0}(t,T);T-t,\xfixed,k\right)=R(\delta,k{,T})$, from estimate
\eqref{aae1} we infer
\begin{equation}\label{aae2}
  u^{\BS}\left(\s^{(\xfixed,\yfixed)}_{0}(t,T);T-t,\xfixed,k\right)-Ce^{\xfixed}M\left(T-t\right)\le u(\delta,k{,T}) \le u^{\BS}\left(\s^{(\xfixed,\yfixed)}_{0}(t,T);T-t,\xfixed,k\right)
  +Ce^{\xfixed}M\left(T-t\right),
\end{equation}
with $C$ as in Notation \ref{notation:ste}.
Now recall that, by Assumption \ref{assum2and2} along with definition \eqref{e10}, we have
 $$\sqrt{2\eps M}\le \s^{(\xfixed,\yfixed)}_{0}(t,T)\le\sqrt{2M}\le \sqrt{2M_{0}}$$
and therefore, for any fixed $\l>0$, the thesis follows by combining \eqref{aae2} with estimate \eqref{andre6} with $\m=\frac{1}{2}$. \end{proof}
}

\begin{remark}
In light of Lemma \ref{lem:imp_vol_u_delta}, the function $\GG\left(u(\delta,k{,T}),k{,T}\right)$
is well defined for any {$\delta\in[0,1]$, $0\leq t<T< T_0$ and $k\in\R$ such that $T-t\leq
\tau_0$} and $|\xfixed-k|\leq \lam \sqrt{M(T-t)}$.
\end{remark}

\begin{lemma}\label{l1_bis}
For any ${q,}m,n\in\N_0$, there exist $C,\tau_0>0$ as in Notation \ref{notation:ste} such that
\begin{align}\label{ae12_bis}
 \left| \left(\partial^n_{1}\p_{2}^{m}{\partial^q_{3}} \GG\right)\left(u(\delta,k{,T}),k{,T}\right) \right| \leq C {M^{q+\frac{1}{2}}}\left(M(T-t)\right)^{-\frac{m{+2q}+n}{2}} e^{-n k},
  \end{align}
for any $\delta\in[0,1]$, $0\leq t<T< T_0$ and $k\in\R$ such that $T-t\leq \tau_0$ and
$|\xfixed-k|\leq \lam \sqrt{M(T-t)}$. Here $C$ also depends on $m${, $q$} and $n$.
\end{lemma}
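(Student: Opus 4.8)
The plan is to estimate the derivatives of the inverse Black\&Scholes function $\GG$ evaluated at $u(\delta,k,T)$ by combining three ingredients: the explicit short-time/small-volatility asymptotics for the Black\&Scholes price and its sensitivities (Appendix \ref{app:BS_greeks}), the a priori control $\sqrt{\eps M}\leq \GG(u(\delta,k,T),k,T)\leq \sqrt{4M}$ provided by Lemma \ref{lem:imp_vol_u_delta}, and the Fa\`a di Bruno formula (Proposition \ref{prop:multivariate_faa}) to express the mixed derivatives $\partial_1^n\partial_2^m\partial_3^q\GG$ in terms of derivatives of $\FF$. Concretely, since $\GG(\cdot,k,T)$ is the inverse of $\FF(\cdot,k,T)$ with respect to the first variable, differentiating the identity $\FF(\GG(u,k,T),k,T)=u$ repeatedly in $u$, $k$ and $T$ yields each derivative of $\GG$ as a rational expression whose numerator is a polynomial in derivatives of $\FF$ (of total order bounded in terms of $m,q,n$) and whose denominator is a power of $\partial_1\FF$, i.e. the Black\&Scholes Vega.

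First I would fix the evaluation point $\s:=\GG(u(\delta,k,T),k,T)$, which by Lemma \ref{lem:imp_vol_u_delta} satisfies $\s\asymp\sqrt{M}$, and I would record the short-time behaviour, as $T-t\to 0$ and in the parabolic region $|\xfixed-k|\leq\lam\s\sqrt{T-t}$, of the quantities $\dfrac{\partial_1^a\partial_2^b\partial_3^c\FF(\s;T-t,\xfixed,k)}{\partial_1\FF(\s;T-t,\xfixed,k)}$. These are exactly the normalized Black\&Scholes Greeks; the key point, which is made precise in Appendix \ref{app:BS_greeks}, is that each such ratio behaves like a constant times $\big(\s\sqrt{T-t}\big)^{-(b+2c)}e^{-bk}$ times a bounded factor (the $e^{-bk}$ coming from differentiating $e^k$-terms in strike, the inverse powers of $\s\sqrt{T-t}$ from the Gaussian scaling of the $d_\pm$ variables, and boundedness because $|x-k|/(\s\sqrt{T-t})$ stays bounded by $\lam$). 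Then I would plug $\s\asymp\sqrt{M}$ into these asymptotics: a ratio scaling like $(\s\sqrt{T-t})^{-(b+2c)}$ becomes $(M(T-t))^{-(b+2c)/2}$ up to constants depending on $\eps$, $M_0$.

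Next I would organize the Fa\`a di Bruno expansion. Writing $\GG$-derivatives via the inverse-function relations, one gets that $\big(\partial_1^n\partial_2^m\partial_3^q\GG\big)(u(\delta,k,T),k,T)$ is a finite sum of terms of the form $\dfrac{1}{(\partial_1\FF)^{p}}\prod_i \partial_1^{a_i}\partial_2^{b_i}\partial_3^{c_i}\FF$, where the combinatorics force $\sum_i(b_i)= m$ \emph{plus} contributions consumed by the chain rule through $\partial_2\GG$ and $\partial_3\GG$, $\sum_i c_i = q$ similarly, and the total "degree" in the first slot $\sum_i a_i - p$ equals $-n$ after accounting for one extra inverse-Vega factor per differentiation in $u$. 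Collecting the scalings from the previous paragraph, every such term is bounded by $C\,M^{q+\frac12}\big(M(T-t)\big)^{-(m+2q+n)/2}e^{-nk}$: the half-integer power $M^{q+\frac12}$ arises because one has one net factor of $\partial_1\FF^{-1}$ (contributing $M^{-1/2}(M(T-t))^{-1/2}$, but the $(T-t)$ part is already absorbed into the displayed $(M(T-t))$-powers and the residual $M$-power from Vega is $M^{+1/2}$ per leftover inverse-Vega after the rational simplification) together with $q$ time-derivatives each contributing an extra $M$ (since $\partial_3\FF$ and $\partial_3\GG$ scale like $M$ times the spatial scaling). Keeping careful track of these powers — in particular checking that after cancellation exactly the advertised $M^{q+\frac12}$ survives — is the bookkeeping core of the argument.

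The main obstacle I anticipate is precisely this exponent accounting: one must verify that the net power of $M$ coming out of the Vega denominators, the $\partial_3$-derivatives, and the substitution $\s\asymp\sqrt{M}$ collapses to $M^{q+\frac12}$ uniformly over all terms produced by Fa\`a di Bruno, and that the $e^{-nk}$ factor is generated exactly once per $\partial_2$-differentiation (and not amplified by the $k$-dependence hidden inside $\s$, which is harmless since $\s=\GG(u(\delta,k,T),k,T)$ is trapped between two $k$-independent constants times $\sqrt M$). A secondary, more routine obstacle is to make the short-time asymptotics of the normalized Black\&Scholes Greeks fully uniform in the parabola $|\xfixed-k|\leq\lam\sqrt{M(T-t)}$ and in $\s\in[\sqrt{\eps M},\sqrt{4M}]$; this is handled by the estimates of Appendix \ref{app:BS_greeks}, which give explicit polynomial-times-Gaussian forms for these Greeks, so that the supremum over the admissible range of $d_\pm$ is a finite constant depending only on $\lam,\eps,M_0,m,q,n$. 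Once these two points are in place, the claimed bound \eqref{ae12_bis} follows by the triangle inequality over the finitely many Fa\`a di Bruno terms.
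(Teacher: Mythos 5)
Your overall plan correctly identifies the same ingredients as the paper's proof — the inverse--function identity $\FF(\GG(u,k,T),k,T)=u$, Fa\`a di Bruno/Bell polynomials, the a priori trap $\sqrt{\eps M}\le\GG(u(\delta,k,T),k,T)\le\sqrt{4M}$ from Lemma \ref{lem:imp_vol_u_delta}, and the Black--Scholes Greek estimates in Appendix \ref{app:BS_greeks} — so the route is right. But the bookkeeping you yourself flag as ``the core of the argument'' contains two concrete errors that would derail the proof as written.

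First, the claimed scaling of the normalized Greeks is wrong. From Lemma \ref{ste:l3} one has $|\partial_1^a\partial_2^b\partial_3^c\FF|\le C\,e^{k}\,\sigma^{-a+2c}(\sigma\sqrt{\tau})^{1-b-2c}$, while $\partial_1\FF=e^{k}\sqrt{\tau}\,\Gbs(1,\zeta)\gtrsim e^{k}\sqrt{\tau}$ on the parabola by \eqref{aae3}. Hence the ratio satisfies
\begin{equation}
 \left|\frac{\partial_1^a\partial_2^b\partial_3^c\FF}{\partial_1\FF}\right|\le C\,\sigma^{\,1-a+2c}\,(\sigma\sqrt{\tau})^{-(b+2c)} ,
\end{equation}
which has no strike dependence because the $e^{k}$ factors cancel, and it carries a $\sigma$-power depending on $a$ that you omit; your claimed $e^{-bk}$ is simply not there. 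Second, you attribute the final $e^{-nk}$ to the $\partial_2$-differentiations, but in the statement of the lemma $n$ counts the $\partial_1$-derivatives. The exponential decay is produced by the inverse-Vega factors $(\partial_1\FF)^{-1}\lesssim e^{-k}/\sqrt{\tau}$ that accumulate once per $u$-differentiation (see the base case \eqref{ae10}: $\partial_1\GG\le C\,e^{-k}/\sqrt{T-t}$, giving $M^{1/2}(M(T-t))^{-1/2}e^{-k}$). If you tie $e^{-k}$ to $\partial_2$ instead of $\partial_1$, the count comes out as $e^{-mk}$, which does not match \eqref{ae12_bis}.

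Beyond these two scaling mistakes, the paper does not proceed by a single ``one-shot'' multivariate Fa\`a di Bruno expansion on $\GG$. It runs a three-tier nested induction: first in $n$ (the $u$-derivatives) via univariate Fa\`a di Bruno applied to $u=\FF(\GG(u,k,T),k,T)$; then in $m$ via the implicit relation $\partial_2\GG=-\partial_1\GG\cdot(\partial_2\FF)\circ\GG$ together with an auxiliary estimate \eqref{aae6} controlling mixed $u$- and $k$-derivatives of $(\partial_1^{\gamma_1}\partial_2^{\gamma_2}\partial_3^{\gamma_3}\FF)\circ\GG$, itself proved by a secondary induction; and finally in $q$ via $\partial_3\GG=-\partial_1\GG\cdot(\partial_3\FF)\circ\GG$. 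You would need this (or an equivalently careful) inductive structure to make the uniform constants and the delicate $M$-power cancellation go through; the ``rational expression with one net inverse Vega'' picture glosses over exactly where the terms with different $h$ in the Bell polynomials contribute cancelling powers of $M$ so that only $M^{q+1/2}$ survives.
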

\proof See Appendix \ref{apperrb}.
\endproof

\begin{lemma}\label{l1}
For any ${q,}m,n\in\N_0$ with ${2q+}m\leq \hat{N}$, there exist $C,\tau_0>0$ as in Notation
\ref{notation:ste} such that
\begin{align}\label{ae12}
 \left| 
 {\frac{d^{q+m}}{dT^q\, dk^m} 
 }\left(\partial^n_{1} \GG\right)\left(u(\delta,k{,T}),k{,T}\right) \right| \leq C 
 {M^{q+\frac{1}{2}}}(M(T-t))^{-\frac{m{+2q}+n}{2}}
  e^{-n k}
  \end{align}
for any $\delta\in[0,1]$, $0\leq t<T< T_0$ and $k\in\R$
such that $T-t\leq \tau_0$ and $|\xfixed-k|\leq \lam \sqrt{M(T-t)}$. Here the constant $C$ also depends on $n$.
\end{lemma}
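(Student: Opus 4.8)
The plan is to obtain \eqref{ae12} by expressing the total derivative $\frac{d^{q+m}}{dT^q\, dk^m}\left(\partial^n_{1}\GG\right)\left(u(\delta,k,T),k,T\right)$ as a sum of terms, each of which is a product of a partial derivative $\left(\partial^{n+a}_{1}\partial_{2}^{b}\partial_{3}^{c}\GG\right)\left(u(\delta,k,T),k,T\right)$ (for suitable nonnegative integers $a,b,c$) times a product of mixed derivatives of $T\mapsto u(\delta,k,T)$ and $k\mapsto u(\delta,k,T)$ with respect to $T$ and $k$. This is just the multivariate Fa\`a di Bruno / chain-rule expansion (Proposition \ref{prop:multivariate_faa}), applied to the composition $G(u(\delta,k,T),k,T)$ with $G=\partial_1^n\GG$: differentiating $m$ times in $k$ and $q$ times in $T$ produces finitely many terms indexed by how the derivatives distribute between the "inner" slot $u(\delta,\cdot,\cdot)$ and the explicit slots $k,T$. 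The number of such terms and the combinatorial coefficients depend only on $m,q,n$, which is harmless since $C$ is allowed to depend on these.

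Next I would estimate each term. For the outer factor I invoke Lemma \ref{l1_bis}, which gives
$\left|\left(\partial^{n+a}_{1}\partial_{2}^{b}\partial_{3}^{c}\GG\right)\left(u(\delta,k,T),k,T\right)\right|\le C\,M^{c+\frac12}\left(M(T-t)\right)^{-\frac{b+2c+(n+a)}{2}}e^{-(n+a)k}$, valid precisely on the parabolic region and for $T-t\le\tau_0$ with the stated uniformity. For the inner factors — the mixed $T$- and $k$-derivatives of $u(\delta,k,T)$ — I use the estimate \eqref{ste:equ11} from Notation \ref{notat1} with $j=1$: $\left|\partial_T^{q'}\partial_k^{m'}\partial_\delta^0 u(\delta,k,T)\right|\le Ce^{\xfixed}M^{q'}\left(M(T-t)\right)^{\frac{(h+1)-m'-2q'}{2}}$ — here $h=0$, so this reads $\left|\partial_T^{q'}\partial_k^{m'} u(\delta,k,T)\right|\le Ce^{\xfixed}M^{q'}\left(M(T-t)\right)^{\frac{1-m'-2q'}{2}}$; and I also need the zeroth-order bound $\eps M\le(u^{\BS})^{-1}\le 4M$ hidden in \eqref{ae9}, i.e. that $u(\delta,k,T)$ itself is comparable to a Black--Scholes price at volatility of order $\sqrt{M}$, but in fact the estimate I really need for a slot carrying no derivative is that $\left|u(\delta,k,T)\right|\le Ce^{\xfixed}$, which follows directly from \eqref{eq:def_u_delta}, \eqref{aae1} and the no-arbitrage bounds. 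Multiplying the exponents of $M$ and of $M(T-t)$ across a term, and checking that the powers of $M(T-t)$ add up correctly — each $T$-derivative hitting the inner $u$ contributes one to $q'$ (hence lowers the $M(T-t)$-exponent by $1$ and raises the $M$-exponent by $1$), while each $k$-derivative hitting the inner $u$ or sitting in an explicit slot contributes $-\frac12$ to the $M(T-t)$-exponent — one sees that every term in the expansion is bounded by $C M^{q+\frac12}\left(M(T-t)\right)^{-\frac{m+2q+n}{2}}e^{-nk}$, with the $e^{-(n+a)k}$ from the outer factor combining with the $e^{\xfixed}$ factors from the inner factors and with the constraint $|\xfixed-k|\le\lam\sqrt{M(T-t)}$ to absorb the discrepancy: since $e^{\xfixed-k}=e^{O(\lam\sqrt{M(T-t)})}$ is bounded on the relevant region (for $T-t\le\tau_0$ and recalling $M\le M_0$), one has $e^{a\xfixed}e^{-(n+a)k}=e^{-nk}\,e^{a(\xfixed-k)}\le C e^{-nk}$.

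The bookkeeping of exponents is the only real work and the main place an error could creep in, so the careful step is verifying that in \emph{every} term of the Fa\`a di Bruno expansion the total $M$-power is exactly $q+\frac12$ and the total $M(T-t)$-power is exactly $-\frac{m+2q+n}{2}$; this is a consequence of the fact that the bounds in Lemma \ref{l1_bis} and in \eqref{ste:equ11} are "homogeneous" in the right way — each differentiation in $T$ costs a factor $M$ and a factor $\left(M(T-t)\right)^{-1}$, and each differentiation in $k$ costs a factor $\left(M(T-t)\right)^{-1/2}$ together with (for the outer factor) a factor $e^{-k}$ — and the total number of $T$-differentiations is $q$ and of $k$-differentiations is $m$ regardless of how they are distributed. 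Finally, taking $\tau_0$ to be the minimum of the thresholds coming from Lemmas \ref{lem:imp_vol_u_delta}, \ref{l1_bis} and from \eqref{ste:equ11}, and $C$ the (finite) sum of the constants over the finitely many terms, completes the argument. The detailed computation is deferred to Appendix \ref{apperrb}.
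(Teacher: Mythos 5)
Your proof is correct and is essentially the paper's proof: both expand the mixed derivative of the composite function via a chain-rule/Fa\`a di Bruno scheme and estimate each resulting term with Lemma \ref{l1_bis} for the outer partial derivatives of $\GG$ and \eqref{ste:equ11} for the inner derivatives of $u$, with identical exponent bookkeeping in $M$ and $M(T-t)$ and the same absorption of $e^{O(\xfixed-k)}$ on the parabolic region. The only cosmetic difference is organizational: the paper first handles the $k$-derivatives (applying the stated Proposition \ref{prop:multivariate_faa} as a bivariate chain rule, Step~1) and then introduces the $T$-derivatives by a Leibniz/inductive argument (Step~2), whereas you collapse both stages into a single mixed-partial expansion — which requires a slightly more general form of Fa\`a di Bruno than the one stated but is otherwise the same computation.
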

\proof See Appendix \ref{apperrb}.
\endproof

We are now ready to prove Theorem \ref{th:IV_error}.
\begin{proof}[of Theorem \ref{th:IV_error}]
We set
  $$G(\delta,k{,T})=\GG(u(\d,k{,T}),k{,T})$$
with $\GG={\GG(u,k,T)}$ and $u=u(\d,k{,T})$ defined in Notation \ref{notat1} and
\eqref{eq:def_u_delta} respectively. By definition we have
\begin{align}\label{eq:sigma_g1}
\sigma(k{,T})=g(1,k{,T}),
\end{align}
where $\s(k{,T}):=\sigma(t,\xfixed,\yfixed,k{,T})$ is the exact implied volatility. Moreover, {for
$\bar{\sigma}_{N}(k{,T}):=\bar{\sigma}_{N}(t,\xfixed,\yfixed;k,T)$ as defined in
\eqref{eq:sig.approx}}, we have
\begin{align}\label{eq:sigmabar_g0tay}
 \bar{\sigma}_{N}(k{,T})=
 \sum_{n=0}^{N}\sigma^{(\xfixed,\yfixed)}_n(t,\xfixed,\yfixed;k,T)
 =\sum_{n=0}^N \frac{1}{n!}\partial^n_{\delta}g(\delta,k{,T})\big|_{\delta=0},
\end{align}
as, by \eqref{eq:def_u_delta} and \eqref{ae6},
$g(\delta,k{,T})|_{\delta=0}=\sigma^{(\xfixed,\yfixed)}_0(t,T),$ and
$\partial^n_{\delta}g(\delta,k{,T})\big|_{\delta=0}=\sigma^{(\xfixed,\yfixed)}_n(t,\xfixed,\yfixed;k,T)$
for $1\leq n\leq N$. Now, by \eqref{eq:sigma_g1}-\eqref{eq:sigmabar_g0tay}, there exists
$\bar{\delta}\in [0,1]$ such that
\begin{align}
 \sigma(k{,T})-\bar{\sigma}_{N}(k{,T})&=
 \frac{1}{(N+1)!}\partial^{N+1}_{\delta}g(\bar{\delta},k{,T}) \\
&\hspace{-65pt}=\frac{1}{(N+1)!}\sum_{h=1}^{N+1}
 \left(\partial^h_{1} \GG\right)\left(u(\bar{\delta},k{,T}),k{,T}\right)\cdot \mathbf{B}_{N+1,h}\left(
 \partial_{\delta}u(\bar{\delta},k{,T}),\partial^2_{\delta}u(\bar{\delta},k{,T}),
 \dots,\partial_{\delta}^{N-h+2}u(\bar{\delta},k{,T})  \right) ,
\end{align}
where the last equality stems from the Fa\`a di Bruno's formula \eqref{eq:Faa_di_Bruno_appendix}.
Now, differentiating  both the left and the right-hand side $m$ {and $q$} times w.r.t. $k$ {and
$T$ respectively,} we get
\begin{align}
\hspace{-30pt} \left|{\partial_T^q} \partial_k^m \sigma(k{,T})-{\partial_T^q}\partial_k^m
\bar{\sigma}_{N}(k{,T})\right| \leq C\sum_{h=1}^{N+1}& {\sum_{l=0}^{q}} \sum_{j=0}^{m}
 \left| \frac{\dd^{{q-l+}m-j} }{{d T^{q-l}} d k^{m-j} }\left(\partial^h_{1}
 \GG\right)\left(u(\bar{\delta},k{,T}),k{,T}\right) \right| \\
 &\cdot \left| \frac{\dd^{{l+}j}
 }{{d T^{l}} d k^{j} }  \mathbf{B}_{N+1,h}\left(  \partial_{\delta}u(\bar{\delta},k{,T}),
 \dots,\partial_{\delta}^{N-h+2}u(\bar{\delta},k{,T})  \right)  \right|.  \label{eq:diff_sigma_sigma_bar}
\end{align}
Again by Fa\`a di Bruno's formula, we have
\begin{align}
 &\left|\frac{\dd^{{l+}j}
 }{{d T^{l}} d k^{j} }  \mathbf{B}_{N+1,h}\left( \partial_{\delta}u(\bar{\delta},k{,T}), \dots,\partial_{\delta}^{N-h+2}u(\bar{\delta},k{,T})  \right)\right| \\
 &\qquad\leq C  \sum_{\substack{ j_1,\dots,j_{N-h+2} \\ i_1+\cdots + i_{N-h+2}=j \\ { l_1+\cdots + l_{N-h+2}=l}}}
 \left|{\partial_T^{l_1}}\partial_k^{i_1}\left(\partial_{\delta}u(\bar{\delta},k{,T})\right)^{j_1}\right|
 \cdots  \left|{\p_T^{l_{N-h+2}}}\p_k^{i_{N-h+2}}\left(\partial^{N-h+2}_{\delta} u(\bar{\delta},k{,T})\right)^{j_{N-h+2}}\right|
\intertext{(by \eqref{ste:equ11})}
 &\qquad\le C \sum_{\substack{{j_1,\dots,j_{N-h+2}}}} e^{(j_1+\cdots+j_{N-h+2})\xfixed} {M^l} (M(T-t))^{-\frac{{j+2l}}{2}+\frac{j_1+\cdots + j_{N-h+2}}{2}+\frac{j_1+2 j_2+\cdots +(N-h+2) j_{N-h+2}}{2}}
\intertext{(by both the identities in \eqref{eq:relation_indexes_bell})}
 &\qquad= C \sum_{\substack{ {j_1,\dots,j_{N-h+2}} }} e^{h\xfixed}  (M(T-t))^{\frac{-j + h + N + 1}{2}}
 =C e^{h\xfixed} {M^l}  (M(T-t))^{\frac{-j {-2l}+ h + N + 1}{2}}. \label{eq:very_final}
\end{align}
Combining Lemma \ref{l1} and \eqref{eq:very_final} with \eqref{eq:diff_sigma_sigma_bar}, we obtain
$$\big|\partial_k^m \sigma(k{,T})-\partial_k^m \bar{\sigma}_{N}(k{,T})\big| \le C{M^{q+\frac{1}{2}}}\big(M(T-t)\big)^{\frac{N+1-m {-2q}}{2}}\sum_{h=1}^{N+1} { e^{h(\xfixed-k)}} 
  .$$
The statement then follows from the assumption {$|\xfixed-k|\leq \l\sqrt{M(T-t)}{\leq \lam T_0}$}.
\end{proof}

\appendix

%
%

\section{Proof of Theorem \ref{th:error_estimates_taylor}}\label{appth44}
First observe that, for any $z,\bar{z}\in\mathbb{R}^d$, $t<T$ and {$m\le N$, we have}
\begin{equation}\label{eq:ste30}
 \partial^m_k u(t,z;T,k)-\partial^m_k\bar{u}_{N}^{(\bar{z})}(t,z;T,k) = \int_t^T \int\limits_{\mathbb{R}^d} \G(t,z;s,\z) \sum_{n=0}^N
 \left({\Ac_{s} - \bar{\Ac}^{(\bar{z})}_{s,n}}\right) \partial^m_k u^{(\bar{z})}_{N-n}(s,\z;T,k)d \z d s,
\end{equation}
where
\begin{align}
 \bar{\Ac}^{(\bar{z})}_{t,n} = \sum_{i=0}^n  \Ac^{(\bar{z})}_{t,i}.
\end{align}
In fact, when $m=0$ the identity \eqref{eq:ste30} reduces to Lemma 6.23 in \cite{LPP4}. The
general case easily follows by applying the operator $\partial^m_k$ to \eqref{eq:ste30} with $m=0$
and then shifting $\partial^m_k$ onto $u^{(\bar{z})}_{N-n}$. {For clarity, we split the proof in
two separate steps.

\medskip\noindent {\bf [Step 1: case $q=0$ and $0\leq m\leq N$]}\\}
Let $$\mathbb{T}^{a_{\alpha}(s,\cdot)}_{z,n}(\zeta):=\sum\limits_{|\beta|\leq n}
\frac{D^{\beta}a_{\alpha}(s,z)}{\beta!}(\zeta-z)^{\beta}$$ be the $n$-th order Taylor polynomial
of the function $\z\mapsto a_{\alpha}(s,\z)$, centered at $z$. Setting $\bar{z}=z$ and by
definition of $(\Ac_{t,i})_{0\leq i \leq N}$, from \eqref{eq:ste30} we obtain
\begin{align}
 \partial^m_k u(t,z;T,k) - \partial^m_k\bar{u}_{N}(t,z;T,k)=
 \sum_{0\le n\le N\atop |\alpha|\leq 2} I_{n,\a}
\end{align}
where
\begin{align}
 I_{n,\a}&=\int_t^T \int\limits_{\mathbb{R}^d}
  \Gamma(t,z;s,\zeta)\left( a_{\alpha}(s,\zeta) -
  \mathbb{T}^{a_{\alpha}(s,\cdot)}_{z,n}(\zeta)
  \right)D_{\zeta}^{\alpha}
  \partial_k^m u^{(z)}_{N-n}(s,\z;T,k) d \zeta d s
  \intertext{(by Corollary \ref{lem:ste10})}
  & = \sum_{
  |\gamma|\leq N-n \atop 1\leq j \leq 3(N-n)}\int_t^T \int\limits_{\mathbb{R}^d}
  \Gamma(t,z;s,\zeta)\left( a_{\alpha}(s,\zeta) -
  \mathbb{T}^{a_{\alpha}(s,\cdot)}_{z,n}(\zeta)
  \right) (\zeta-z)^{\gamma}\cdot\\
  &\hspace{150pt}\cdot {f^{(N-n,0,m,\alpha)}_{\gamma,j}}(z;s,T) \partial_{\zeta_1}^{j+m+\alpha_1} u^{(z)}_0(s,\zeta;T,k)\, d \zeta d
  s
  \intertext{{(integrating by parts $m$ times
  })}
 & = \sum_{
 |\gamma|\leq N-n \atop 1\leq j \leq 3(N-n)}\int_t^T \int\limits_{\mathbb{R}^d} {(-1)^{m}} R^{\alpha,\gamma,m}_{n,1}R^{\alpha,\gamma,m,j}_{n,2}\, d \zeta d s, \label{eq:ste47}
\end{align}
with
\begin{equation}\label{eq:ste46}
\begin{split}
{R^{\alpha,\gamma,m}_{n,1} }& ={\partial^{m}_{\zeta_1}} \left(  \Gamma(t,z;s,\zeta)\big(
a_{\alpha}(s,\zeta) -
  \mathbb{T}^{a_{\alpha}(s,\cdot)}_{z,n}(\zeta)
  \big) (\zeta-z)^{\gamma}\right),  \\
{R^{\alpha,\gamma,m,j}_{n,2} }& ={f^{(N-n,0,m,\alpha)}_{\gamma,j}}(z;s,T)
{\partial_{\zeta_1}^{j+\alpha_1}} u^{(z)}_0(s,\zeta;T,k).
\end{split}
\end{equation}
Note that $R_{n,1}$ is well defined because $a_{\alpha}(s,\cdot)\in C^{N+1}(\R^d)$,  by
hypothesis, and $m\leq N$.
Now, 
{on the one hand,} by repeatedly applying the Leibniz rule, the mean value theorem and Lemma
\ref{lem:gaussian_estimates} with $c=2$, we obtain
\begin{align}\label{eq:ste49}
 \left|{R^{\alpha,\gamma,m}_{n,1}}\right|&\leq CM(M(s-t))^{ \frac{{n-m+|\gamma|+1}}{2}}\Gbs \left(2 M(s-t), \zeta -
 z\right).
\end{align}
On the other hand, by \eqref{eq:ste36b} and by Lemma \ref{ste:l2}, we have
\begin{align}\label{eq:ste48}
 \left|{R^{\alpha,\gamma,m,j}_{n,2} }\right| & \leq C e^{\zeta_1} (M(T-s))^{\frac{{N-n-|\g|-\alpha_{1}+1}}{2}}
 \leq C e^{\zeta_1} (M(T-s))^{\frac{{N-n-|\g|-1}}{2}} && \text{(since $\alpha_1\leq 2$)}.
\end{align}
To conclude, it is enough to combine estimates \eqref{eq:ste48} and \eqref{eq:ste49} with identity
\eqref{eq:ste47}. In particular, by using $$
 \int_{\R^d} \Gbs\left(2 M(s-t), \zeta - z\right)  e^{\zeta_1} d \zeta = e^{z_1 + M(s-t)/2 },$$
we get
\begin{align}
 |I_{n,\a}| & \leq C e^{z_1} M^{\frac{N-m+2}{2}} \int_{t}^T (s-t)^{{ \frac{{n-m+|\gamma|+1}}{2}}} (T-s)^{\frac{{N-n-|\g|-1}}{2}} d s \leq
 C e^{z_1} (M(T-t))^{\frac{N-m+2}{2}},
\end{align}
where we used the identity $$\int_{t}^{T}(T-s)^{n}(s-t)^{j}\, d s=\frac{\Gamma_E (j+1) \Gamma_E
(n+1)}{\Gamma_E (j+n+2)}(T-t)^{j+n+1},$$ with $\Gamma_E$ representing the Euler Gamma function.

\medskip\noindent {\bf [Step 2: case $0<  m +2 q\leq N$]}
\\
We first prove that,
for any $\bar{m},\bar{q}\in\N_0$ with $\bar{m}+2\bar{q}\leq N-2$, one has
\begin{align}
&\lim_{s\to T^-}\int\limits_{\mathbb{R}^d} \G(t,z;s,\z) \sum_{n=0}^{N}
 \left(\Ac_{s} - \bar{\Ac}^{(z)}_{s,n}\right) \partial^{\bar{q}}_T\partial^{\bar{m}}_k u^{(z)}_{N-n}(s,\z;T,k)d \z \\
 &= \bigg(  \frac{a_{11}(T,{z})}{2}  \bigg)^{\bar{q}}e^k \int\limits_{\mathbb{R}^{d-1}} \big( \partial^2_{k} + \partial_{k} \big)^{\bar{q}} \big(1+\partial_{k}\big)^{\bar{m}}\left( \G(t,z;T,k,\eta) \big( a_{11}(T,k,\eta) -  \mathbb{T}^{a_{11}(T,\cdot)}_{z,N}(k,\eta)   \big)\right)  d \eta.\label{eq:ste_limit_bis} 
\end{align}
Set
  $$ I_{n}(t,z): = \sum_{|\a|\leq 2}\  \int\limits_{\mathbb{R}^d}\Gamma(t,z;s,\zeta)
  \left( a_{\alpha}(s,\zeta) -
  \mathbb{T}^{a_{\alpha}(s,\cdot)}_{z,n}(\zeta)\right)D_{\zeta}^{\alpha}
  \partial_T^{\bar{q}}\partial_k^{\bar{m}} u^{(z)}_{N-n}(s,\z;T,k) d \zeta, \qquad 0\leq n \leq N.
$$ Now, by applying \eqref{eq:ste37} and integrating by parts $\bar{m}+2 \bar{q}+2$ times w.r.t.
$\z_1$ (this is possible because $a_{\alpha}(s,\cdot)\in C^{N+1}(\R^d)$), for $n\leq N-1$ we get
\begin{align}
 I_{n}(t,z)
  &= (-1)^{\bar{m}+2 \bar{q}+2} \sum_{|\a|\leq 2} \sum_{
 |\gamma|\leq N-n \atop 1\leq j \leq 3(N-n)} \int\limits_{\mathbb{R}^d} \partial^{\bar{m}+2 \bar{q}+2}_{\zeta_1}
\left(  \big( a_{\alpha}(s,\zeta) -
  \mathbb{T}^{a_{\alpha}(s,\cdot)}_{z,n}(\zeta)
  \big)\Gamma(t,z;s,\zeta) (\zeta-z)^{\gamma}\right) R^{\alpha,\gamma,\bar{q},\bar{m},j}_{n} \, d \zeta,
\end{align}
with
\begin{equation}\label{eq:ste46_bis}
 R^{\alpha,\gamma,\bar{q},\bar{m},j}_{n}  = f^{(N-n,\bar{q},\bar{m},\alpha)}_{\gamma,j}(z;s,T)
 \partial_{\zeta_1}^{j+\alpha_1-2}
u^{(z)}_0(s,\zeta;T,k),
\end{equation}
and $f^{(N-n,\bar{q},\bar{m},\alpha)}_{\gamma,j}$ as in Corollary \ref{lem:ste10}.
Moreover, by \eqref{eq:ste36b} and by Lemma \ref{ste:l2} we obtain $$
 \big|R^{\alpha,\gamma,\bar{q},\bar{m},j}_{n}\big| \leq C  M^{\bar{q}}e^{\z_{1}} \sqrt{M(T-s)},
$$ and thus
\begin{equation}\label{eq:ste401}
\lim_{s\to T^-} I_n(t,z) = 0,\qquad 0\leq n\leq N-1,\ t<T,\ z\in\R^{d}.
\end{equation}
On the other hand, by \eqref{eq:ste33} and \eqref{eq:ste33_bis}, we have 
\begin{align}
I_N(t,z)&:=
\int\limits_{\mathbb{R}^d} \G(t,z;s,\z)
 \left(\Ac_{s} - \bar{\Ac}^{(z)}_{s,N}\right) \partial^{\bar{q}}_T\partial^{\bar{m}}_k u^{(z)}_{0}(s,\z;T,k)d \z \\
 & =\Big(  \frac{a_{11}(T,{z})}{2}  \Big)^{\bar{q}} 
 \int\limits_{\mathbb{R}^d} \G(t,z;s,\z)
\big( a_{11}(s,\zeta) -
  \mathbb{T}^{a_{11}(s,\cdot)}_{z,N}(\zeta)
  \big)\big( \partial^2_{\z_1} - \partial_{\z_1} \big)^{\bar{q}+1} \big(1-\partial_{\z_1}\big)^{\bar{m}} u^{(z)}_{0}(s,\z;T,k)d \z
  \intertext{(integrating by parts
  )}
&=  \Big(  \frac{a_{11}(T,{z})}{2}  \Big)^{\bar{q}} 
\int\limits_{\mathbb{R}^d} \big( \partial^2_{\z_1} + \partial_{\z_1} \big)^{\bar{q}}
\big(1+\partial_{\z_1}\big)^{\bar{m}}\left( \G(t,z;s,\z) \big( a_{11}(s,\zeta) -
  \mathbb{T}^{a_{11}(s,\cdot)}_{z,N}(\zeta)
  \big)\right)\\
&  \hspace{110pt} \cdot \big( \partial^2_{\z_1} - \partial_{\z_1} \big) u^{(z)}_{0}(s,\z;T,k)d \z.
 \end{align}
From \eqref{e10} and \eqref{eq:ste_400} we have 
 $$
 \big( \partial^2_{\z_1} - \partial_{\z_1} \big) u^{(z)}_{0}(s,\z;T,k) =e^k \Gamma_0\left( \int_s^T a_{11}(r,z)d r, \zeta_1-\frac{\int_s^T a_{11}(r,z)d r}{2} - k
 \right),
 $$
where $\G_{0}$ denotes the Gaussian density in \eqref{eq:normal_density_d} with $d=1$.
Noting that
 $$
 \Gamma_0\left( \int_s^T a_{11}(r,z)d r, \zeta_1-\frac{\int_s^T a_{11}(r,z)d r}{2} - k \right) \longrightarrow \d_k,\qquad \text{as $s\to T^-$},
 $$
we obtain
 \begin{equation}\label{eq:ste402}
 \lim_{s\to T^-} I_N(t,z) =  \Big(  \frac{a_{11}(T,{z})}{2}  \Big)^{\bar{q}}e^k \int\limits_{\mathbb{R}^{d-1}} \big( \partial^2_{k} + \partial_{k} \big)^{\bar{q}} \big(1+\partial_{k}\big)^{\bar{m}}\left( \G(t,z;T,k,\eta) \big( a_{11}(T,k,\eta) -  \mathbb{T}^{a_{11}(T,\cdot)}_{z,N}(k,\eta)   \big)\right)  d \eta.
 \end{equation}
Finally, \eqref{eq:ste401} and \eqref{eq:ste402} yield \eqref{eq:ste_limit_bis}.

We now prove \eqref{eq:error_estimate}. By repeatedly applying the Leibniz rule on
\eqref{eq:ste30} and \eqref{eq:ste_limit_bis}, we get
\begin{equation}
{ \partial^q_T} \partial^m_k \big(u- \bar{u}_{N}\big)(t,x,y;T,k) = \int_t^T
\int\limits_{\mathbb{R}^d} \G(t,z;s,\z) \sum_{n=0}^N \left({ \Ac_{s} -
\bar{\Ac}^{(\bar{z})}_{s,n}}\right) \partial^q_T \partial^m_k u^{(\bar{z})}_{N-n}(s,\z;T,k)d \z d
s +\sum_{i=0}^{q-1} J_i,
\end{equation}
with
\begin{equation}
J_i =\partial_T^{q-1-i}\Bigg( \bigg(  \frac{a_{11}(T,{z})}{2}  \bigg)^{i}e^k
\int\limits_{\mathbb{R}^{d-1}} \big( \partial^2_{k} + \partial_{k} \big)^{i}
\big(1+\partial_{k}\big)^{m}\left( \G(t,z;T,k,\eta) \big( a_{11}(T,k,\eta) -
\mathbb{T}^{a_{11}(T,\cdot)}_{z,N}(k,\eta)   \big)\right)  d \eta \Bigg).
\end{equation}
Now, by proceeding as in Step 1, it is easy to show that
\begin{equation}
\left| \int_t^T \int\limits_{\mathbb{R}^d} \G(t,z;s,\z) \sum_{n=0}^N \left({ \Ac_{s} -
\bar{\Ac}^{(\bar{z})}_{s,n}}\right) \partial^q_T \partial^m_k u^{(\bar{z})}_{N-n}(s,\z;T,k)d \z d
s \right| \leq C e^x { M^{q}} \left(\ce({T-t})\right)^{\frac{{N-m{ - 2q}+2}}{2}}.
\end{equation}
Analogously, by repeatedly applying Leibniz rule along with Faa di Bruno's Formula (Proposition
\ref{prop:multivariate_faa}) and Lemma \ref{lem:gaussian_estimates}, and by using that
 $$ e^k
 \int_{\R^{d-1}} \Gbs\big(2 M(T-t),  x - k, y - \eta \big)  d \eta = \frac{e^{k}}{\sqrt{4\pi
 M(T-t)}}e^{-\frac{(k-x)^{2}}{4 M(T-t)}} 
 \leq \frac{C e^{x}}{\sqrt{M(T-t)}}, $$
with $\G_{0}$ as in \eqref{eq:normal_density_d}, one can also show
\begin{equation}
\left| J_i \right| \leq C e^x { M^{q}} \left(\ce({T-t})\right)^{\frac{{N-m{ - 2q}+2}}{2}},\qquad
0\leq i\leq q-1,
\end{equation}
which concludes the proof.

\section{Proof of Lemmas \ref{l1_bis} and \ref{l1}}\label{apperrb}
\begin{proof}[of Lemma \ref{l1_bis}]
The case $n=m=0$ has been already proved in \eqref{ae9}. To prove the general case, we proceed by
induction on $m$ and $n$.

\medskip\noindent
{\bf [Step 1: case $m{=q}=0$]}. \\ By \eqref{ae4} and by using $|\xfixed-k|\leq \lam
\sqrt{M(T-t)}$, we have
\begin{align}\label{ae10_bis}
 \partial_\s {\FF(\s,k,T)}
  &\geq \frac{e^k \sqrt{T-t} }{\sqrt{2 \pi }} \exp\left({- \frac{\lam^2 M}{2\s^2}-\frac{\sigma ^2 (T-t)}{8}  - \frac{ \lam \sqrt{M(T-t)}}{2}}\right)\\ &\geq \frac{e^k \sqrt{T-t} }{\sqrt{2 \pi }}
 \exp\left({- \frac{\lam^2 M}{2\s^2}-\frac{\sigma ^2 T_0}{8}  - \frac{ \lam \sqrt{M_{0}T_0}}{2}}\right),
\end{align}
which, by \eqref{ae9}, implies
\begin{equation}\label{aae3}
 \left(\p_{1}\FF\right)\left(\GG(u(\delta,k{,T}),k{,T})\right)\geq \frac{e^k \sqrt{T-t} }{\sqrt{2 \pi }} \exp\left({ -
\frac{\lam^2 }{2\eps}-\frac{M_{0} T_0}{2} - \frac{ \lam \sqrt{M_{0} T_0}}{2}}\right) .
\end{equation}
Therefore, we obtain
\begin{align}\label{ae10}
 0<\left(\p_{1}\GG\right)\left(u(\delta,k{,T}),k{,T}\right)=\frac{1}{\left(\partial_1\FF\right)\big(\GG(u(\delta,k{,T}),k{,T})\big)}\leq \frac{C}{e^{k} \sqrt{T-t}},
\end{align}
which is \eqref{ae12_bis} for $m=0$ and $n=1$.

We now fix $\bar{n}\in \N$, assume \eqref{ae12_bis} to hold true for any $n\in\N_{0}$ with $n\leq
\bar{n}$ and prove it true for $\bar{n}+1$. Differentiating the identity
$u=\FF({\GG(u,k,T)},k{,T})$ and applying the univariate version of Fa\`a di Bruno's formula (see
Appendix \ref{append:faa_bell}, Eq. \eqref{eq:Faa_di_Bruno_appendix}), we obtain
\begin{align}
 \partial^{\bar{n}+1}_1{\GG(u,k,T)}=- \sum\limits_{h=2}^{\bar{n}+1} \frac{(\partial^h_{1}\FF)\left({\GG(u,k,T)},k{,T}\right)}
 {(\partial_{1}\FF)\left({\GG(u,k,T)},k{,T}\right)} \mathbf{B}_{\bar{n}+1,h}\left(
 \partial_{1} {\GG(u,k,T)},\dots,\partial^{\bar{n}-h+2}_{1} {\GG(u,k,T)}\right).
\end{align}
Now, by \eqref{aae3}, Lemma \ref{ste:l3} and recalling the estimate of Lemma
\ref{lem:imp_vol_u_delta} for $u=u(\d,k{,T})$, we get
  $$\left|\frac{\left(\partial^h_{1}\FF\right)\left(\GG(u(\d,k{,T}),k{,T}),k{,T}\right)}{\left(\partial_{1}\FF\right)\left(\GG(u(\d,k{,T}),k{,T}),k{,T}\right)}\right|\le C M^{-\frac{h-1}{2}}.$$
Moreover, for any $h=2,\dots,\bar{n}+1$, we have
\begin{align}
 &\left|\mathbf{B}_{\bar{n}+1,h}\left(\partial_{1}
 {\GG(u,k,T)},\dots,\partial^{\bar{n}-h+2}_{1}{\GG(u,k,T)}\right)|_{u=u(\d,k)}\right|\le
\intertext{(by \eqref{eq:Bell_polyn_appendix} in Appendix \ref{append:faa_bell})}
 &\leq C\sum_{j_1,\dots,j_{\bar{n}-h+2}}\left| \left(\partial_{1} \GG\right)(u(\d,k{,T}),k{,T})\right|^{j_1} \cdots
  \left|\left(\partial^{\bar{n}-h+2}_{1} \GG\right)(u(\d,k{,T}),k{,T})\right|^{j_{\bar{n}-h+2}}\le
\intertext{(by inductive hypothesis)}
 &\leq C  \sum_{j_1,\dots,j_{\bar{n}-h+2}} \sqrt{M}\left(e^k\sqrt{M(T-t)}\right)^{-j_1} \cdots \sqrt{M}\left(e^k\sqrt{M(T-t)}\right)^{-(\bar{n}-h+2)
 j_{\bar{n}-h+2}}\\
 &\leq
 C M^{\frac{h}{2}}\left(e^k\sqrt{M(T-t)}\right)^{-\bar{n}-1},
\end{align}
where the last inequality follows from the identities \eqref{eq:relation_indexes_bell} in Appendix
\ref{append:faa_bell}. This concludes the proof of \eqref{ae12_bis} with $m=0$.

\medskip\noindent {\bf [Step 2: case {$q=0$}
] }\\ {We proceed by induction on $m$. The sub-case $m=0$ has already been proved in Step 1.} Now
fix $\bar{m}\in\N$, assume \eqref{ae12_bis} to hold for any $n,m\in\N_{0}$, $m\leq \bar{m}$ and
prove it true for $m=\bar{m}+1$ and $n\in\N_{0}$. First note that differentiating w.r.t. $k$ the
identity
 \begin{equation}\label{eq:ste407}
 \s=\GG({\FF(\s,k,T)},k{,T}),\qquad \s>0,
 \end{equation}
we get
\begin{equation}\label{ea1}
  \left(\p_{2}\GG\right)\left({\FF(\s,k,T)},k{,T}\right)=-\left(\p_{1}\GG\right)\left({\FF(\s,k,T)},k{,T}\right)\cdot \p_{2}{\FF(\s,k,T)},
\end{equation}
or equivalently, setting $u={\FF(\s,k,T)}$ that is $\s={\GG(u,k,T)}$,
\begin{equation}\label{ea2}
  \p_{2}\GG\left(u,k{,T}\right)=-\p_{1}\GG\left(u,k{,T}\right)\cdot
  \left(\p_{2}\FF\right)({\GG(u,k,T)},k{,T}),\qquad u\in ](e^{\xfixed}-e^{k})^+,e^\xfixed[.
\end{equation}
Fix $n\in\N_0$: differentiating \eqref{ea2}, $n$ times w.r.t. $u$ and $\bar{m}$ times w.r.t. $k$,
we get
\begin{align}
 \partial_1^n \partial_2^{\bar{m}+1} {\GG(u,k,T)} &=- \frac{d^{n+\bar{m}}}{d u^{n}d k^{\bar{m}}}
 \left(\p_{1}\GG\left(u,k{,T}\right)\cdot \left(\p_{2}\FF\right)({\GG(u,k,T)},k{,T})\right)\\
 & = - \sum_{i=0}^{n}\sum_{j=0}^{\bar{m}} \binom{n}{i}\binom{\bar{m}}{j} \left(\partial_{1}^{n+1-i}\partial_{2}^{\bar{m}-j}  {\GG(u,k,T)}\right)
 \cdot \frac{d^{i+j}}{d u^{i}d k^{j}}\left(\p_{2}\FF\right)({\GG(u,k,T)},k{,T}).\hspace{-10pt}\\ \label{equ:ste7}
\end{align}
Now, by inductive hypothesis, for any $i,j,n\in\N_{0}$ with $i\le n$ and $j\le \bar{m}$, we have
\begin{equation}\label{equ:ste6}
 \left|\left(\partial_{1}^{n+1-i}\partial_{2}^{\bar{m}-j}\GG\right)(u(\delta,k{,T}),k{,T})\right| \leq C \sqrt{M}\left( M(T-t)\right)^{-\frac{n+1-i+\bar{m}-j}{2}}
  e^{-(n+1-i) k}.
  \end{equation}
The proof will be concluded once we show that
\begin{equation}\label{aae5}
 \left|\frac{d^{i+j}}{d u^{i}d k^{j}}\left(\p_{2}\FF\right)({\GG(u,k,T)},k{,T})\big|_{u=u(\delta,k{,T})}\right| \leq  C \left(M(T-t)\right)^{-\frac{i+j}{2}}e^{-(i-1)
 k}.
\end{equation}
Indeed \eqref{aae5}, combined with \eqref{equ:ste6} and \eqref{equ:ste7}, yields \eqref{ae12_bis}
for $\bar{m}+1$.

\medskip\noindent More generally, we prove that for any $i,j,\g_{1},\g_{2}{, \g_3}\in\N_{0}$ with $\g_{1}+\g_{2}{+\g_3}>0$ and $j\le \bar{m}$ (here
$\bar{m}$ is fixed in the inductive hypothesis at the beginning of Step 2), we have
\begin{equation}\label{aae6}
 \left|\frac{d^{i+j}}{d u^{i}d k^{j}}\left(\p_{1}^{\g_{1}}\p_{2}^{\g_{2}}{\p_3^{\g_{3}} }\FF\right)({\GG(u,k,T)},k{,T})\big|_{u=u(\delta,k{,T})}\right|
 \leq  C M^{{\g_3}-\frac{\g_{1}}{2}}\left(M(T-t)\right)^{\frac{1-i-j-\g_{2}{-2 \g_3}}{2}}e^{(1-i) k},
\end{equation}
We prove \eqref{aae6} by using another inductive argument on $j$.\\ {\bf [Step 2-a): case $j=0$]}.
\\
By the univariate version of the Fa\`a di Bruno's formula (see Appendix \ref{append:faa_bell}, Eq.
\eqref{eq:Faa_di_Bruno_appendix}), for any $i,\g_{1},\g_{2}\in\N_{0}$ we have
\begin{equation}\label{equ:ste2}
\begin{split}
 \frac{d^{i}}{d u^{i}} \left(\p_{1}^{\g_{1}}\p_{2}^{\g_{2}}{\p_3^{\g_{3}} } \FF\right) (\GG\left(u,k{,T}\right),k{,T}) =& \sum_{h=1}^{i}
 \left(\partial_1^{h+\g_{1}}\partial_{2}^{\g_{2}} {\p_3^{\g_{3}} } \FF\right) (\GG\left(u,k{,T}\right),k{,T})\\
 &\cdot\mathbf{B}_{i,h}\left( \p_1 {\GG(u,k,T)},\p^2_1 {\GG(u,k,T)},\dots, \p^{i-h+1}_1 {\GG(u,k,T)} \right).
\end{split}
\end{equation}
By Lemmas 
\ref{ste:l3} and \ref{lem:imp_vol_u_delta}, using that $\g_{1}+\g_{2}{+\g_3}>0$, we have
\begin{equation}\label{equ:ste1}
 \left|\left(\partial_1^{h+\g_{1}}\partial_{2}^{\g_{2}} {\p_3^{\g_{3}} } \FF\right) (\GG\left(u,k{,T}\right),k{,T})|_{u=u(\delta,k{,T})}\right|
 \leq C e^{k} { M^{{\g_3}-\frac{h+\g_{1}}{2}}}(M(T-t))^{\frac{1-\g_{2}{-2\g_3}}{2}}.
\end{equation}
Moreover, by \eqref{ae12_bis} with $m=0$ (already proved in Step 1) and by the relations
\eqref{eq:relation_indexes_bell} we have
\begin{equation}
 \left|\mathbf{B}_{i,h}\left( \p_1 {\GG(u,k,T)},\p^2_1 {\GG(u,k,T)},\dots, \p^{i-h+1}_1 {\GG(u,k,T)} \right)|_{u=u(\delta,k{,T})}\right|
 \leq C {M}^{\frac{h}{2}} (M(T-t))^{-\frac{i}{2}} e^{-i k},
\end{equation}
which, combined with \eqref{equ:ste1} and \eqref{equ:ste2}, proves \eqref{aae6} for $j=0$ and any
$i,\g_{1},\g_{2}\in\N_{0}$ with $\g_{1}+\g_{2}{+\g_3}>0$.

\medskip\noindent {\bf [Step 2-b): case $1\le j\le \bar{m}$]}\\
Fix $j_{0}\in\N$ with $j_{0}\leq \bar{m}-1$: we assume \eqref{aae6} to hold for any
$i,\g_{1},\g_{2}{,\g_3}\in\N_{0}$ with $\g_{1}+\g_{2}{+\g_3}>0$ and $0\le j\leq j_{0}$ and prove
it true for $i,\g_{1},\g_{2}{,\g_3}\in\N_{0}$ with $\g_{1}+\g_{2}{+\g_3}>0$ and $j=j_{0}+1$. We
have
\begin{equation}\label{equ:ste5}
\begin{split}
 \frac{d^{i+j_{0}+1}}{d u^{i}d k^{j_{0}+1}} &\left(\p_{1}^{\g_{1}}\p_{2}^{\g_{2}}{\p_{3}^{\g_{3}}} \FF\right) (\GG\left(u,k{,T}\right),k{,T})\\
 &= \frac{d^{i+j_{0}}}{d u^{i}d k^{j_{0}}} \bigg(\left(\p_{1}^{1+\g_{1}}\p_{2}^{\g_{2}}{\p_{3}^{\g_{3}}} \FF\right) \left(\GG\left(u,k{,T}\right),k{,T}\right)\cdot \p_2 {\GG(u,k,T)}\\
&\hspace{60pt} +\left(\p_{1}^{\g_{1}}\p_{2}^{1+\g_{2}}{\p_{3}^{\g_{3}}}
\FF\right)(\GG\left(u,k{,T}\right),k{,T})\bigg)\\
 &=\sum_{h=0}^{i} \sum_{q=0}^{j_{0}}\binom{i}{h}\binom{j_{0}}{q}
 \left(\frac{d^{h+q}}{d u^{h}d k^{q}}\left(\p_{1}^{1+\g_{1}}\p_{2}^{\g_{2}}{\p_{3}^{\g_{3}}} \FF\right) (\GG\left(u,k{,T}\right),k{,T})\right)
 \cdot \p_1^{i-h} \p^{j_{0}-q+1}_2 {\GG(u,k,T)}\\
 &\quad +\frac{d^{i+j_{0}}}{d u^{i}d k^{j_{0}}} \left(\p_{1}^{\g_{1}}\p_{2}^{1+\g_{2}}{\p_{3}^{\g_{3}}} \FF\right)(\GG\left(u,k{,T}\right),k{,T}).
\end{split}
\end{equation}
By inductive hypothesis we have
\begin{equation}
 \left|\frac{d^{h+q}}{d u^{h}d k^{q}}\left(\p_{1}^{1+\g_{1}}\p_{2}^{\g_{2}} {\p_{3}^{\g_{3}}} \FF\right) (\GG\left(u,k{,T}\right),k{,T})\big|_{u=u(\delta,k{,T})}\right|
 \leq C {M}^{{\g_3}-\frac{\gamma_1+1}{2}}(M(T-t))^{-\frac{h+q+\gamma_2{+2\g_3}-1}{2}}e^{-{(h-1)} k},
 \end{equation}
 and
  \begin{equation}
 \left|\frac{d^{i+j_{0}}}{d u^{i}d k^{j_{0}}} \left(\p_{1}^{\g_{1}}\p_{2}^{1+\g_{2}} {\p_{3}^{\g_{3}}} \FF\right)(\GG\left(u,k{,T}\right),k{,T})\big|_{u=u(\delta,k{,T})}\right|
 \leq C {M}^{{\g_3}-\frac{\gamma_1}{2}}(M(T-t))^{-\frac{i+j_{0}+\gamma_2{+2\g_3}}{2}}e^{-(i-1) k}.
\end{equation}
Now we recall that we are assuming, by inductive hypothesis, that \eqref{ae12_bis} holds for any
$n\in\N_{0}$ and $m\le \bar{m}$: thus, since $j_{0}-q+1\le \bar{m}$ by assumption, we get
 $$\left|\p_1^{i-h} \p^{j_{0}-q+1}_2  {\GG(u,k,T)}|_{u=u(\delta,k{,T})}\right| \leq C {M}^{\frac{1}{2}}(M(T-t))^{-\frac{i-h+j_{0}-q+1}{2}}e^{-(i-h) k}.$$
The last three estimates combined with \eqref{equ:ste5} yield \eqref{aae6} for $j=j_{0}+1$.

{\medskip\noindent {\bf [Step 3: case $q\in\N$
] }\\ It is analogous to Step 2. For simplicity, we only prove the case $q=1$. By identity
\eqref{eq:ste407} we get
\begin{equation}\label{ea1_bis}
  \left(\p_{3}\GG\right)\left({\FF(\s,k,T)},k{,T}\right)=-\left(\p_{1}\GG\right)\left({\FF(\s,k,T)},k{,T}\right)\cdot \p_{3}{\FF(\s,k,T)},
\end{equation}
or equivalently, setting $u={\FF(\s,k,T)}$ that is $\s={\GG(u,k,T)}$,
\begin{equation}\label{ea2_bis}
  \p_{3}\GG\left(u,k{,T}\right)=-\p_{1}\GG\left(u,k{,T}\right)\cdot
  \left(\p_{3}\FF\right)({\GG(u,k,T)},k{,T}),\qquad u\in ](e^{\xfixed}-e^{k})^+,e^\xfixed[.
\end{equation}
Fix $n,m\in\N_0$: differentiating \eqref{ea2_bis}, $n$ and $m$ times w.r.t. $u$ and $k$
respectively, and once w.r.t. $T$, we get
\begin{align}
 \partial_1^n \partial_2^{m}\partial_3 {\GG(u,k,T)} &=- \frac{d^{n+{m}}}{d u^{n}d k^{{m}}}
 \left(\p_{1}\GG\left(u,k{,T}\right)\cdot \left(\p_{3}\FF\right)({\GG(u,k,T)},k{,T})\right)\\
 & = - \sum_{i=0}^{n}\sum_{j=0}^{{m}} \binom{n}{i}\binom{{m}}{j} \left(\partial_{1}^{n+1-i}\partial_{2}^{{m}-j}  {\GG(u,k,T)}\right)
 \cdot \frac{d^{i+j}}{d u^{i}d k^{j}}\left(\p_{3}\FF\right)({\GG(u,k,T)},k{,T}).\hspace{-10pt}\\ \label{equ:ste7_bis}
\end{align}
Now, by \eqref{ae12_bis} with $q=0$, for any $i,j,n\in\N_{0}$ with $i\le n$ and $j\le {m}$, we
have
\begin{equation}\label{equ:ste6b}
 \left|\left(\partial_{1}^{n+1-i}\partial_{2}^{{m}-j}\GG\right)(u(\delta,k{,T}),k{,T})\right| \leq C {M}^{\frac{1}{2}}\left( M(T-t)\right)^{-\frac{n+1-i+{m}-j}{2}}
  e^{-(n+1-i) k},
  \end{equation}
whereas, by \eqref{aae6}, we obtain
\begin{equation}\label{aae5b}
 \left|\frac{d^{i+j}}{d u^{i}d k^{j}}\left(\p_{3}\FF\right)({\GG(u,k,T)},k{,T})\big|_{u=u(\delta,k{,T})}\right| \leq  C M \big(M(T-t)\big)^{-\frac{i+j+1}{2}}e^{-(i-1)
 k}.
\end{equation}
Eventually, \eqref{equ:ste6b} and \eqref{aae5b} combined with \eqref{equ:ste7_bis} prove
\eqref{ae12_bis} for $q=1$. }
\end{proof}
\begin{remark}
The inductive argument of the previous proof shows that estimate \eqref{aae6} is valid for any
$i,j,\g_{1},\g_{2}{,\g_3}\in\N_{0}$, with $\g_{1}+\g_{2}{+\g_3}>0$ and {$\delta\in[0,1]$, $0\leq
t<T< T_0$ and $k\in\R$ such that $T-t\le \t_{0}$} and $|\xfixed-k|\leq \lam \sqrt{M(T-t)}$. In
this case, the constant $C$ in \eqref{aae6} also depends on $i,j,\g_{1},\g_2$ and ${\g_3}$.
\end{remark}

\begin{proof}[of Lemma \eqref{l1}]{For simplicity, we split the proof in two separate steps.

\medskip\noindent {\bf [Step 1: case $q=0$
] }\\ } By the bivariate version of Fa\`a di Bruno's formula (see Appendix \ref{append:faa_bell},
Proposition \ref{prop:multivariate_faa}), we obtain {\begin{align}
 &\frac{d^m }{d k^m}\left(\partial^n_{1} \GG\right)\left(u(\delta,k{,T}),k{,T}\right) \\
 &=\sum_{h=1}^{m} \left(\nabla^{h} \partial_1^n \GG\right) \left(u(\delta,k{,T}),k{,T}\right)\ast \mathbf{B}_{m,h}
 \left(\binom{\p_{k} u(\delta,k{,T}) }{1},\binom{\p_{k}^{2} u(\delta,k{,T})}{0},
 \dots,\binom{\p_{k}^{m-h+1} u(\delta,k{,T})}{0}\right)=
\intertext{(by exploiting the first relation in
\eqref{eq:relation_indexes_bell})}\label{eq:estim_ste_52}
 & = \sum_{h=1}^{m}\sum_{j_{1}=0}^{h}g_{h,j_{1}}(\d,k{,T})
 \left(\nabla^{j_1}\partial_1^{n+h-j_1} \GG\right)\left(u(\delta,k{,T}),k{,T}\right)\ast \binom{\p_{k} u(\delta,k{,T})}{1}^{j_1}
\end{align}
where ``$\ast$'' denotes the tensorial scalar product (see \eqref{eq:tens_scal_prod}) and
\begin{equation}\label{aae7}
 g_{h,j_{1}}(\d,k{,T})=\sum_{j_2,\dots,j_{m-h+1}} c^{m,h}_{j_1,\dots,j_{m-h+1}}\prod_{i=2}^{m-h+1}\left(\p_{k}^{i} u(\delta,k{,T})\right)^{j_{i}}
\end{equation}
for some constants $c^{m,h}_{j_1,\dots,j_{m-h+1}}$ and the sum in \eqref{aae7} is taken over all
sequences $j_2,\dots, j_{m-h+1}$ of non-negative integers verifying the identities in
\eqref{eq:relation_indexes_bell}.} Now, by estimate \eqref{ste:equ11} and by the relations
\eqref{eq:relation_indexes_bell}, we obtain
\begin{equation}\label{eq:estim_ste_51}
 \left|g_{h,j_{1}}(\d,k{,T})\right|  \leq C e^{(h-j_1)\xfixed} (M(T-t))^{-\frac{m-h}{2}}.
\end{equation}
Moreover we have
\begin{align}
 &\left|\left(\nabla^{j_1}\partial_1^{n+h-j_1} \GG\right)\left(u(\delta,k{,T}),k{,T}\right)\ast \binom{\p_{k}
 u(\delta,k{,T})}{1}^{j_1}\right|\\
 &\ \le
 C\sum_{q=0}^{j_1} \left|\left(\partial_1^{n+h-q}\partial_2^{q}\GG\right)
 \left(u(\delta,k{,T}),k{,T}\right)\right|\left|\left(\p_{k} u(\delta,k{,T})\right)^{j_1-q}\right|
\end{align}
and therefore, by Lemma \ref{l1_bis} and estimate \eqref{ste:equ11}, we get
\begin{equation}\label{eq:estim_ste_50}
 \left|\left(\nabla^{j_1}\partial_1^{n+h-j_1} \GG\right)\left(u(\delta,k{,T}),k{,T}\right)\ast \binom{\p_{k} u(\delta,k{,T})}{1}^{j_1} \right|
 \leq C e^{-(n+h-q) k + (j_1-q) \xfixed }\sqrt{M}(M(T-t))^{-\frac{n+h}{2}} .
\end{equation}
Eventually, \eqref{ae12} follows by combining \eqref{eq:estim_ste_51}-\eqref{eq:estim_ste_50} with
\eqref{eq:estim_ste_52} and by observing that
\begin{equation}
e^{(h-q)(\xfixed-k)}\leq e^{m|\xfixed-k|}\leq e^{m\lambda \sqrt{M(T-t)}},
\end{equation}
since {$|\xfixed-k|\leq \l\sqrt{M(T-t)}$}.

{\medskip\noindent {\bf [Step 2: case $q\in \N$
] }\\ It is analogous to Step 1. For simplicity, we only prove the case $q=1$. Leibniz rule yields
\begin{align}
\frac{d^m }{d k^m}&\frac{d }{d T}\left(\partial^n_{1}
\GG\right)\left(u(\delta,k{,T}),k{,T}\right)\\ &=  \frac{d^m }{d k^m}\bigg( \big(\p_T
u(\delta,k{,T})\big)  \big(\partial^{n+1}_{1} \GG\big)\left(u(\delta,k{,T}),k{,T}\right)
+\left(\partial^n_{1} \partial_3 \GG\right)\left(u(\delta,k{,T}),k{,T}\right) \bigg)\\ & =
\sum_{i=0}^{m} \binom{m}{i} \big(\p^{m-i}_k \p_T u(\delta,k{,T})\big) \frac{d^i }{d k^i}
\big(\partial^{n+1}_{1} \GG\big)\left(u(\delta,k{,T}),k{,T}\right)+\frac{d^m }{d
k^m}\left(\partial^n_{1}
\partial_3 \GG\right)\left(u(\delta,k{,T}),k{,T}\right).
\\ \label{eq:ste410}
\end{align}
By \eqref{ae12} with $q=0$, by \eqref{ste:equ11}, and by using that $|x_0-k|\leq \lam (T-t)$, we
get
\begin{equation}\label{eq:ste409}
\left|   \big(\p^{m-i}_k \p_T  u(\delta,k{,T})\big) \frac{d^i }{d k^i} \big(\partial^{n+1}_{1}
\GG\big)\left(u(\delta,k{,T}),k{,T}\right)   \right| \leq C  {
M^{1+\frac{1}{2}}}(M(T-t))^{-\frac{m{ +2}+n}{2}}
  e^{-n k} .
\end{equation}
On the other hand, by proceeding exactly as in Step 1, one can show
\begin{equation}
\left|  \frac{d^m }{d k^m}\left(\partial^n_{1} \partial_3
\GG\right)\left(u(\delta,k{,T}),k{,T}\right) \right| \leq C {
M^{1+\frac{1}{2}}}(M(T-t))^{-\frac{m{ +2}+n}{2}}
  e^{-n k} ,
\end{equation}
which, combined with \eqref{eq:ste409} and \eqref{eq:ste410}, proves \eqref{ae12} for $q=1$. }
\end{proof}

\section{Short-time/small-noise estimates in the Black\&Scholes model}\label{app:BS_greeks}
We collect here the short-time estimates for the sensitivities with respect to $\sigma$, $x$ and
$k$ of the Black\&Scholes function $u^{\BS}(\sigma)=u^{\BS}(\sigma;\t,x,k)$, needed to prove the
results of Section \ref{sec:error.impvol}. In this appendix $\G_{0}$ denotes the Gaussian density
in \eqref{eq:normal_density_d} with $d=1$. 
\begin{lemma}\label{lemm:homogeneous_gaussian_estimate}
For any $n\in \N_0$ and $c>1$ we have
\begin{equation}\label{eq:homogeneous_gaussian_estimate}
 \left( \frac{|x|}{\sqrt{t}} \right)^n \Gbs(t,x)\leq  \sqrt{c}\left(\frac{c n}{(c-1)\sqrt{e}}\right)^{\frac{n}{2}} \Gbs (c t, x),\qquad t\in \R_{>0},\ x\in\R.
\end{equation}
\end{lemma}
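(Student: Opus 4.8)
The plan is to convert this two-variable inequality into an elementary one-variable maximization by exploiting the scaling self-similarity of the Gaussian. First I would record the pointwise ratio of the two densities: for all $t>0$ and $x\in\R$,
\begin{equation*}
 \frac{\Gbs(t,x)}{\Gbs(ct,x)}=\sqrt{c}\,\exp\!\left(-\frac{c-1}{2c}\,\frac{x^{2}}{t}\right).
\end{equation*}
Dividing \eqref{eq:homogeneous_gaussian_estimate} through by $\Gbs(ct,x)$ and cancelling the common factor $\sqrt{c}$ on both sides, the assertion is seen to be equivalent to the scalar inequality
\begin{equation*}
 \left(\frac{x^{2}}{t}\right)^{n/2}\exp\!\left(-\frac{c-1}{2c}\,\frac{x^{2}}{t}\right)\le\left(\frac{cn}{(c-1)\sqrt{e}}\right)^{n/2},\qquad t>0,\ x\in\R .
\end{equation*}

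Next I would introduce $s:=x^{2}/t\ge0$ and $a:=\frac{c-1}{2c}>0$, so that it suffices to estimate $\sup_{s\ge0}f(s)$ with $f(s):=s^{n/2}e^{-as}$. For $n=0$ one has $f\equiv1$, which already matches the right-hand side. For $n\ge1$ the function $f$ is continuous on $[0,\infty)$, vanishes at $s=0$ and as $s\to\infty$, and is strictly positive in between, so it attains its maximum at the unique critical point $s^{\ast}=\frac{n}{2a}=\frac{cn}{c-1}$, where
\begin{equation*}
 f(s^{\ast})=\left(\frac{n}{2a}\right)^{n/2}e^{-n/2}=\left(\frac{n}{2ae}\right)^{n/2}=\left(\frac{cn}{(c-1)e}\right)^{n/2}.
\end{equation*}
Since $e>\sqrt{e}$, the last quantity is bounded above by $\bigl(\tfrac{cn}{(c-1)\sqrt{e}}\bigr)^{n/2}$, which is exactly the bound required; in fact the sharper constant with $e$ in place of $\sqrt{e}$ is valid, but only the weaker stated form is needed in the sequel.

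Honestly there is no substantial obstacle here: once the ratio identity is written down, everything reduces to a routine single-variable calculus estimate. The only points deserving care are treating $n=0$ separately and keeping the elementary algebra $\frac{n}{2ae}=\frac{cn}{(c-1)e}$ straight so that the final constant lines up with the statement. An alternative to differentiating $f$ is to invoke the elementary inequality $\log u\le u-1$ with $u=s/s^{\ast}$, which yields $\tfrac{n}{2}\log s-as\le\tfrac{n}{2}\log s^{\ast}-as^{\ast}$ directly; either route is immediate.
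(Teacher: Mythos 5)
Your proof is correct and is essentially the paper's own argument: both reduce the inequality to maximizing the one-variable function $s \mapsto s^{n/2}e^{-as}$ (the paper uses the variable $z=|x|/\sqrt{t}$, you use $s=z^2$, a cosmetic change) and then observe that the resulting sharp constant $(cn/((c-1)e))^{n/2}$ is bounded by the stated one with $\sqrt{e}$. The remark that the stated constant is not sharp is likewise consistent with the paper.
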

\begin{proof}
Set $z=\frac{|x|}{\sqrt{t}}$. For any $c>1$ we have
\begin{equation}
 \left( \frac{|x|}{\sqrt{t}} \right)^n \Gbs(t,x) = \frac{z^n}{\sqrt{2\pi t}} \exp\left( -\frac{z^{2}}{2} \right) =  \sqrt{c}\,g(z)\Gbs (c t, x) ,
\end{equation}
with
\begin{equation}
 g(z) = z^n \exp\left( -\frac{z^2}{2} \left(1-\frac{1}{c} \right)  \right),\qquad z\ge 0.
\end{equation}
The statement now follows by observing that $g$ attains a global maximum at $z_{n}=\sqrt{\frac{c
n}{c-1}}$ and that
 $$ g(z_{n}) = e^{-\frac{n}{2}} \left(\frac{c n}{c-1}\right)^{n/2}. $$
\end{proof}
{
\begin{lemma}\label{lemm:homogeneous_gaussian_estimate_deriv}
For any $n\in \N_0$ and $c>1$ we have
\begin{equation}\label{eq:homogeneous_gaussian_estimate_deriv}
 |\p_x^n \Gbs(t,x)|\leq  C\,t^{-\frac{n}{2}}\Gbs (c t, x),\qquad t\in \R_{>0},\ x\in\R,
\end{equation}
where $C$ is a positive constant only dependent on $n$ and $c$.
\end{lemma}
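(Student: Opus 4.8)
The statement to prove is Lemma~\ref{lemm:homogeneous_gaussian_estimate_deriv}: a Gaussian-type bound on $|\p_x^n \Gbs(t,x)|$ in terms of $t^{-n/2}\Gbs(ct,x)$. The natural approach is to compute $\p_x^n\Gbs(t,x)$ explicitly in terms of Hermite polynomials and then invoke the already-established Lemma~\ref{lemm:homogeneous_gaussian_estimate}. First I would recall that, writing $\Gbs(t,x)=(2\pi t)^{-1/2}\exp(-x^2/(2t))$, repeated differentiation produces
\begin{equation}
 \p_x^n \Gbs(t,x) = (-1)^n t^{-\frac{n}{2}} H_n\!\left(\frac{x}{\sqrt{t}}\right)\Gbs(t,x),
\end{equation}
where $H_n$ is the (probabilists') Hermite polynomial of degree $n$. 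This identity is verified by a one-line induction on $n$, using $\p_x\Gbs(t,x) = -\frac{x}{t}\Gbs(t,x)$ and the recurrence $H_{n+1}(z) = z H_n(z) - H_n'(z)$.

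Next I would bound $|H_n(z)|$ by a polynomial in $|z|$ with nonnegative coefficients: there is a constant $c_n$ (depending only on $n$) such that $|H_n(z)|\le c_n(1+|z|^n)$ for all $z\in\R$, simply because $H_n$ has degree $n$ and finitely many coefficients. Applying this with $z=x/\sqrt t$ gives
\begin{equation}
 |\p_x^n\Gbs(t,x)| \le c_n\, t^{-\frac{n}{2}}\left(1+\Big(\tfrac{|x|}{\sqrt t}\Big)^n\right)\Gbs(t,x).
\end{equation}
Now fix any $c>1$ and choose an intermediate constant, say $c'=\sqrt{c}$ (so $1<c'<c$); apply Lemma~\ref{lemm:homogeneous_gaussian_estimate} with exponents $0$ and $n$ and parameter $c'$ to bound both $\Gbs(t,x)$ and $(|x|/\sqrt t)^n\Gbs(t,x)$ by a constant multiple of $\Gbs(c't,x)$. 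Since $\Gbs(c't,x)\le (c/c')^{1/2}\Gbs(ct,x)$ — indeed for fixed $x$ the map $s\mapsto s^{1/2}\Gbs(s,x)=(2\pi)^{-1/2}e^{-x^2/(2s)}$ is increasing, so $\Gbs(c't,x) = (c't)^{-1/2}\cdot(c't)^{1/2}\Gbs(c't,x)\le (c't)^{-1/2}(ct)^{1/2}\Gbs(ct,x) = (c/c')^{1/2}\Gbs(ct,x)$ — we collect all the $n$- and $c$-dependent factors into a single constant $C$ and obtain \eqref{eq:homogeneous_gaussian_estimate_deriv}.

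There is no real obstacle here; the only point requiring mild care is bookkeeping of constants so that the final $C$ depends only on $n$ and $c$ (and not on $t$ or $x$), which is automatic since the Hermite-coefficient bound $c_n$, the constant in Lemma~\ref{lemm:homogeneous_gaussian_estimate}, and the ratio $(c/c')^{1/2}$ are all of that form. A cosmetic alternative that avoids even naming Hermite polynomials is to argue by induction on $n$ directly: from $\p_x^{n+1}\Gbs = \p_x(\p_x^n\Gbs)$ and the Leibniz/chain rule one sees that differentiating once costs at most a factor $C(1+|x|/t)\le C t^{-1/2}(1+|x|/\sqrt t)$ on the relevant scale, and then Lemma~\ref{lemm:homogeneous_gaussian_estimate} absorbs the $|x|/\sqrt t$ growth into a slightly fatter Gaussian; iterating $n$ times and using $\Gbs(c^{(n)}t,x)\le (\text{const})\,\Gbs(ct,x)$ for the finite chain of intermediate constants $c^{(1)}<\dots<c^{(n)}=c$ gives the claim. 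I would present the Hermite version as it is the cleanest.
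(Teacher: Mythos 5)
Your proof is correct and follows essentially the same route as the paper: both write $\p_x^n\Gbs(t,x)$ as $t^{-n/2}$ times a Hermite polynomial in $x/\sqrt{t}$ (up to a normalization that the paper writes as $\mathbf{H}_n(x/\sqrt{2t})$) multiplied by $\Gbs(t,x)$, and then absorb the polynomial growth via Lemma \ref{lemm:homogeneous_gaussian_estimate}. The only difference is that you insert an unnecessary intermediate constant $c'$ — Lemma \ref{lemm:homogeneous_gaussian_estimate} with the given $c$ (and its $n=0$ case for the constant term) already gives the bound directly.
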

\begin{proof}
Then, by definition \eqref{eq:normal_density_d} we have
\begin{equation}
\p_x^n \Gbs(t,x) = t^{-\frac{n}{2}} \mathbf{H}_n\left(\frac{x}{\sqrt{2t}}\right) \Gbs(t,x),
\end{equation}
and thus the statement easily stems from Lemma \ref{lemm:homogeneous_gaussian_estimate}.
\end{proof}
} In what follows we will make use of the representation of the Black\&Scholes price in term of
the Gaussian density $\Gbs$ in \eqref{eq:normal_density_d}, i.e.
\begin{equation}\label{eq:BS_convolution}
 u^{\BS}(\sigma)=u^{\BS}(\sigma;\t,x,k) = \int_{k}^{+\infty} \Gbs\left(\s^2 \t, x-\frac{\s^2\t}{2}-y\right)\left(e^{y} - e^{k}\right) d y,
\end{equation}
and of the family of Hermite polynomials defined as
\begin{equation}\label{eq:def_hermite}
\mathbf{H}_n(x):=e^{x^2}\partial_x^n e^{-x^2},\qquad n\in\N_0.
\end{equation}

\begin{lemma}\label{ste:l2}
For any $m,n\in \mathbb{N}_0$ and $M>0$
we have
\begin{align}\label{eq:ste01}
 \left|\partial^n_{x}\p_{k}^{m} \ub(\s;\t,x,k)\right|& \le C{e^{x}}
 \left(\s\sqrt{\t}\right)^{{(1-m-n)\wedge 0}}
 , \qquad x,k \in \R, \quad 0<\s\sqrt{\t} \leq M,
\end{align}
where $a\wedge b=\min\{a,b\}$ and $C$ is a positive constant only dependent on $m,n$ and $M$.
\end{lemma}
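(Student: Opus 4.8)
The plan is to work directly from the convolution representation \eqref{eq:BS_convolution},
\[
 u^{\BS}(\sigma;\t,x,k) = \int_{k}^{+\infty} \Gbs\!\left(\s^2 \t,\, x-\tfrac{\s^2\t}{2}-y\right)\left(e^{y} - e^{k}\right) d y,
\]
and to push the $\partial_x$ and $\partial_k$ derivatives inside the integral. For $\partial_x$ this is immediate: each $x$-derivative falls on the Gaussian factor $\Gbs(\s^2\t,\cdot)$. For $\partial_k$ one must be careful because $k$ appears both in the lower limit of integration and in the integrand through $e^k$; however, at $y=k$ the factor $(e^y-e^k)$ vanishes, so the boundary terms produced by differentiating the limit disappear and $\partial_k$ acts only on the integrand, producing a sum of terms in which some derivatives hit $\Gbs$ (via the $-y$, but recentering will show they can be converted to $y$-derivatives or absorbed) and at most one $e^k$ factor times lower-order pieces. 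A clean way to organize this: after the substitution $y \mapsto y$, differentiate once in $k$ to get
\[
 \partial_k u^{\BS}(\sigma;\t,x,k) = -e^{k}\int_{k}^{+\infty} \Gbs\!\left(\s^2\t, x-\tfrac{\s^2\t}{2}-y\right) d y,
\]
the boundary term cancelling as noted, and then iterate: each further $\partial_k$ either brings down another $-e^k$ (keeping the same Gaussian integral, which is bounded by $1$) or differentiates the lower limit, which now produces $-e^k\,\Gbs(\s^2\t, x-\tfrac{\s^2\t}{2}-k)$ and its $k$-derivatives. Thus $\partial_k^m u^{\BS}$ is a finite linear combination, with bounded coefficients, of terms of the form $e^k$ times either $\int_k^\infty \Gbs\,dy$ (bounded by $1$) or $\partial_k^j\Gbs(\s^2\t, x-\tfrac{\s^2\t}{2}-k)$ for $j\le m-1$.

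Combining with $\partial_x^n$, every resulting term is $e^k$ times (i) $\int_k^\infty \partial_x^n\Gbs(\s^2\t,\cdot)\,dy$ when no $k$-derivative hit the Gaussian, or (ii) $\partial_x^{n}\partial_k^{j}\Gbs(\s^2\t, x-\tfrac{\s^2\t}{2}-k)$ with $j\le m-1$, which is a pure $(n+j)$-th order spatial derivative of a Gaussian of variance $\s^2\t$ evaluated at a single point. For type (ii) I would apply Lemma \ref{lemm:homogeneous_gaussian_estimate_deriv} (with, say, $c=2$): $\bigl|\partial_x^{n+j}\Gbs(\s^2\t,w)\bigr|\le C(\s^2\t)^{-(n+j)/2}\Gbs(2\s^2\t,w)\le C(\s\sqrt\t)^{-(n+j)}(\s^2\t)^{-1/2}$ since the remaining Gaussian is bounded by $(4\pi\s^2\t)^{-1/2}$. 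Here is the one subtlety requiring attention: this gives a power $(\s\sqrt\t)^{-(n+j)-1}$, which for $j\le m-1$ is at worst $(\s\sqrt\t)^{-(n+m)}$ — consistent with the claimed exponent $(1-m-n)\wedge 0$ precisely because type (ii) terms only occur when $m\ge 1$, so $1-m-n\le -n\le 0$ and we must check $-(n+j)-1 \ge 1-m-n$, i.e. $j\le m-2$; the boundary case $j=m-1$ needs the observation that differentiating the lower limit $m-1$ times already consumed one $e^k$-free slot, so in fact the top Gaussian-derivative term carries a factor that is one order better, or else one re-derives the identity $\partial_k[e^k\int_k^\infty \Gbs\,dy]$ more carefully to see the $\Gbs$ itself (not its derivative) appears at the innermost step. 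For type (i) terms, when $n\ge 1$ one integrates: $\int_k^\infty \partial_x^n\Gbs(\s^2\t,x-\tfrac{\s^2\t}2-y)\,dy = \pm\partial_x^{n-1}\Gbs$ evaluated at $y=k$ (a telescoping in $y$), reducing to type (ii) with one fewer derivative; when $n=0$ the integral is bounded by $1$, giving the $(\s\sqrt\t)^0$ bound, which is exactly the $(1-m-n)\wedge 0 = 0$ case when $m=n=0$.

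Finally, the factor $e^k$ in front of all terms is converted to $e^x$ at the cost of a constant: on the relevant range we only ever use the lemma with $x,k\in\R$ arbitrary, so strictly speaking one keeps $e^k$ and notes $e^k \le C e^x$ is \emph{not} true globally — hence the statement is genuinely written with $e^x$ on the right because the Gaussian factors that were discarded in type (ii) actually supply the decay: $e^k\,(\s^2\t)^{-1/2}\Gbs(2\s^2\t, x-\tfrac{\s^2\t}2-k) \le C e^x (\s\sqrt\t)^{-1}\cdot(\s\sqrt\t)^{-1}e^{-(x-k)^2/(8\s^2\t)}$, and the exponential dominates the ratio $e^{k-x}$, yielding a bound $\le C e^x (\s\sqrt\t)^{-(n+j)-1}$ uniformly in $x,k$. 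So the real work is: (a) establish the differentiation-under-the-integral bookkeeping and the cancellation of boundary terms, (b) apply Lemma \ref{lemm:homogeneous_gaussian_estimate_deriv} to each Gaussian-derivative term, and (c) absorb $e^{k-x}$ into the leftover Gaussian to land the $e^x$ prefactor with constants depending only on $m,n,M$. I expect step (c) — tracking that every surviving term indeed retains one Gaussian factor capable of absorbing $e^{k-x}$ and that the bookkeeping of powers matches $(1-m-n)\wedge 0$ in the borderline $j=m-1$ case — to be the main (though entirely routine) obstacle; everything else is a direct computation from \eqref{eq:BS_convolution} and the two preceding Gaussian lemmas.
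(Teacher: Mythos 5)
Your proof is correct, but it takes a noticeably more computational route than the paper's. Where you push $\partial_k$ directly through the convolution representation and track which contributions hit the lower limit and which hit the integrand, the paper sidesteps this bookkeeping entirely with the identity $\partial_k \ub = \ub - \partial_x \ub$ (eq.~\eqref{eq:ste33}), which, since $\partial_x$ and $\partial_k$ commute, gives
\begin{equation}
\partial_x^n\partial_k^m \ub \;=\; \sum_{j=0}^m \binom{m}{j}(-1)^j\,\partial_x^{n+j}\ub
\end{equation}
in one line and reduces the lemma to the pure $\partial_x$ case. For that case the paper also differs from you in one small but efficient way: rather than applying Lemma~\ref{lemm:homogeneous_gaussian_estimate_deriv} to a point-evaluated Gaussian and then absorbing $e^{k-x}$ into the surviving Gaussian factor (your step~(c) --- it works, by completing the square, but it is an extra argument to write out), the paper performs a single integration by parts in $y$, rewriting $\int_k^\infty \partial_x^n\Gbs\cdot(e^y-e^k)\,dy$ as $\int_k^\infty \partial_x^{n-1}\Gbs\cdot e^y\,dy$, so the $e^x$ prefactor falls out automatically from integrating $e^y$ against a Gaussian centered near $x$, with no absorption step needed. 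Finally, the ``borderline'' power count you flag at $j=m-1$ is a self-inflicted imprecision: after $m$ applications of $\partial_k$, the highest-order $k$-derivative of the point-evaluated Gaussian that can appear is $\partial_k^{m-2}$, not $\partial_k^{m-1}$, because the very first $\partial_k$ produces only $-e^k\int_k^\infty\Gbs$ with no boundary term --- you note this yourself in your closing parenthetical, but the paper's identity renders the whole question moot.
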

\begin{proof}
Throughout this proof we will denote by $C$ any generic constant that depends at most on $m,n$ and
$M$. We first prove the statement for $m=0$. If also $n=0$ then the thesis easily follows by
writing $\ub$ as an expectation. If $n\geq 1$ then by \eqref{eq:BS_convolution} we have
\begin{align}
 \partial^n_x \ub(\s;\t,x,k) &= \int_{k}^{+\infty} \partial^n_x\Gbs\left(\s^2 \t, x-\frac{\s^2\t}{2}-y\right) \left(e^{y} - e^{k}\right) d y=
\intertext{(since $\partial_x \Gbs = - \partial_y \Gbs$ and integrating by parts)}
 & =  \int_{k}^{\infty} \partial^{n-1}_x  \Gbs\left(\s^2 \t, x-\frac{\s^2\t}{2}-y\right)  e^{y} d
 y.\label{eq:ste_400}
\end{align}
Thus, by the Gaussian estimate \eqref{eq:homogeneous_gaussian_estimate_deriv} with $c=2$ we obtain
\begin{align}
 \left|\partial^n_x \ub(\s;\t,x,k)\right|  &\le C\left(\sigma \sqrt{\tau}\right)^{-n+1}  \int_{\R} \Gbs\left(2\s^2 \t, x-\frac{\s^2\t}{2}-y\right)  e^{y} d
 y= {C}\,e^{x+\frac{\s^{2} \tau}{2}} \left(\s\sqrt{\t}\right)^{-n+1}
\end{align}
which proves the statement for $m=0$. The case $m\geq 1$ now trivially stems from the identity
\begin{equation}\label{eq:ste33}
\partial_k \ub(\s;\t,x,k) =  \ub(\s;\t,x,k) -\partial_x \ub(\s;\t,x,k),
\end{equation}
along with \eqref{eq:ste01} with $m=0$.
\end{proof}

\begin{proposition}\label{p1} { Fix $(t,T,k,\sig)$ and let $\z=\frac{x-k-\frac{\s^{2}\t}{2}}{\s\sqrt{2\t}}$ and $\tau=T-t$.  Then for any $n \geq 2$ we have
\begin{align}
 \frac{\p_\sig^n u^{\BS}(\sig)}{\p_\sig u^{\BS}(\sig)}
    &=  \sum_{q=0}^{\left\lfloor n/2 \right\rfloor}\sum_{p=0}^{n-q-1}
            c_{n,n-2q}\sig^{n-2q-1} \tau^{n-q-1} \binom{n-q-1}{p}
            \(\frac{1}{\sig\sqrt{2\tau}}\)^{p+n-q-1} \mathbf{H}_{p+n-q-1}(\zeta), \label{eq:2}
\end{align}}
where the coefficients $(c_{n,n-2k})$ are defined recursively by
\begin{align}
c_{n,n}
    &=1 , &
    &\text{and}&
c_{n,n-2q}
    &=(n-2q+1) c_{n-1,n-2q+1} + c_{n-1,n-2q-1}, &
q &\in \{ 1, 2, \cdots , \left\lfloor n/2 \right\rfloor \}.
\end{align}
\end{proposition}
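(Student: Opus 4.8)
The plan is to establish formula \eqref{eq:2} by a direct computation based on the Gaussian representation \eqref{eq:BS_convolution} of the Black\&Scholes price, exploiting the well-known fact that differentiation of a Gaussian in its variance parameter can be traded for spatial derivatives, which in turn produce Hermite polynomials. First I would recall the heat-equation-type identity satisfied by the Gaussian kernel: writing $u^{\BS}(\sig)$ as in \eqref{eq:BS_convolution}, the vega $\p_\sig u^{\BS}(\sig)$ is proportional to the Gaussian density evaluated at the log-moneyness, i.e. $\p_\sig u^{\BS}(\sig) = e^x \sqrt{\tau}\,\Gbs\!\left(\sig^2\tau, x-k\right)$ up to the standard reparametrization; more precisely $\p_\sig u^{\BS}(\sig)=\frac{e^x\sqrt{\tau}}{\sqrt{2\pi}}\exp\!\left(-\zeta^2-\frac{(x-k)}{2}\right)$ with $\zeta$ as in the statement. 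The key structural observation is that $\p_\sig$ acting on $\p_\sig u^{\BS}$ can be written, via the chain rule through the variables $\sig^2\tau$ and $\frac{\sig^2\tau}{2}$, as a first-order differential operator in $\zeta$ with polynomial-in-$\sig$ coefficients.

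Next I would set up the induction on $n$. The recursion for the $c_{n,n-2q}$ is designed precisely to track what happens when one applies one more $\p_\sig$: each application either differentiates an existing Hermite-polynomial factor (lowering the Hermite index and, through $\mathbf{H}_m'=2m\,\mathbf{H}_{m-1}$ or the analogous recurrence in the chosen normalization, producing the coefficient $(n-2q+1)$) or multiplies by a new factor of $\zeta$ times $\frac{1}{\sig\sqrt{2\tau}}$ coming from the $\zeta$-dependence of the prefactor (raising indices and producing the $c_{n-1,n-2q-1}$ term). The binomial sum over $p$ in \eqref{eq:2} records the repeated use of the Hermite three-term recurrence $x\,\mathbf{H}_m = \frac12\mathbf{H}_{m+1} + m\,\mathbf{H}_{m-1}$ (or its monic variant), so I would prove the base case $n=2$ by explicit differentiation of the vega and then check that applying $\p_\sig$ to the right-hand side of \eqref{eq:2}, re-expanding products $\zeta\,\mathbf{H}_\ell(\zeta)$ via the recurrence and re-indexing, reproduces \eqref{eq:2} with $n$ replaced by $n+1$ and with the stated coefficient recursion. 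Alternatively, and perhaps more cleanly, I would differentiate the quotient $\p_\sig^{n}u^{\BS}/\p_\sig u^{\BS}$ directly using that $\p_\sig^{n+1}u^{\BS} = \p_\sig(\p_\sig^n u^{\BS})$ and that $\p_\sig\log(\p_\sig u^{\BS})$ is an explicit linear function of $\zeta$ times $\frac{1}{\sig\sqrt{2\tau}}$, so the whole ratio satisfies a simple first-order ODE-type recursion in $n$.

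The main obstacle I expect is purely bookkeeping: keeping the three nested index sums ($q$, $p$, and the internal Hermite-recurrence shifts) aligned when re-indexing after one differentiation, and verifying that the powers of $\sig$, of $\tau$, and of $\frac{1}{\sig\sqrt{2\tau}}$ all match up with the exponents $n-2q-1$, $n-q-1$, and $p+n-q-1$ in \eqref{eq:2}. A secondary subtlety is the normalization convention for the Hermite polynomials: the definition \eqref{eq:def_hermite} gives the ``physicists''' Hermite polynomials (up to sign), so I must use $\mathbf{H}_n'(x) = -2\,x\,\mathbf{H}_{n}(x) + \ldots$ — more precisely the relations $\mathbf{H}_{n+1}(x) = -2x\,\mathbf{H}_n(x) - \mathbf{H}_n'(x)$ and $\mathbf{H}_n'(x)=\ldots$ consistent with \eqref{eq:def_hermite} — and be careful with signs throughout; a sign error here would be invisible until the final comparison. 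Once the indexing is pinned down, the verification of the recursion for $c_{n,n-2q}$ is immediate from matching coefficients of like powers of $\sig$, and the base case $n=2$ closes the induction.
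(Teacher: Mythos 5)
The paper does not supply its own argument for Proposition \ref{p1}: the proof given is a one-line citation to Proposition 3.5 of \cite{LPP2}, so there is no internal proof to compare your sketch against. On its own terms, your plan is the natural one and almost certainly mirrors the referenced source: set $R_n := \p_\sig^n u^{\BS}/\p_\sig u^{\BS}$, observe that $\p_\sig^{n+1}u^{\BS}=\p_\sig(R_n\,\p_\sig u^{\BS})$ gives the quotient recursion $R_{n+1}=\p_\sig R_n + R_2\,R_n$, compute $R_2=\p_\sig\log\p_\sig u^{\BS}$ explicitly as a polynomial in $\zeta$, and induct. Your flag about the Hermite normalization is also well placed: with \eqref{eq:def_hermite} as written one has $\mathbf{H}_1(\zeta)=-2\zeta$, and then the $n=2$ instance of \eqref{eq:2} gives $-\sqrt{2\tau}\,\zeta+2\zeta^2/\sigma$, whereas a direct computation of $R_2=-2\zeta\,\p_\sig\zeta$ with $\p_\sig\zeta=-\zeta/\sigma-\sqrt{2\tau}/2$ gives $+\sqrt{2\tau}\,\zeta+2\zeta^2/\sigma$; so formula \eqref{eq:2} is in fact written for the physicists' polynomials $(-1)^n\mathbf{H}_n$. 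Resolving that discrepancy is a prerequisite for any coefficient matching, and you correctly anticipated it.

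That said, what you have written is a plan, not a proof. The entire content of the proposition is the combinatorial claim that the coefficient multiplying $\sigma^{n-2q-1}\tau^{n-q-1}\binom{n-q-1}{p}(\sigma\sqrt{2\tau})^{-(p+n-q-1)}\mathbf{H}_{p+n-q-1}(\zeta)$ equals the $c_{n,n-2q}$ defined by the stated two-term recursion, and this you defer entirely (``once the indexing is pinned down \dots the verification \dots is immediate''). You never carry out even the base case $n=2$; you do not show how one application of $\p_\sig$ sends the $q$-th and $(q-1)$-th blocks of the expansion of $R_{n-1}$ into the $q$-th block of $R_n$ with coefficient $(n-2q+1)c_{n-1,n-2q+1}+c_{n-1,n-2q-1}$; and your identification of the inner $p$-sum with ``the Hermite three-term recurrence'' is a guess rather than a derivation (what actually generates the binomial structure is the chain rule through $\p_\sig\zeta=-\zeta/\sigma-\sqrt{2\tau}/2$, whose two terms carry different powers of $\sigma$ and $\sqrt{\tau}$). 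Since the recursion for $c_{n,n-2q}$ is the assertion being proved, this coefficient matching is the crux, not ``purely bookkeeping.'' A secondary slip that is harmless for the ratio but should be corrected before you compute anything: your displayed vega should read $\p_\sig u^{\BS}=\frac{e^k\sqrt{\tau}}{\sqrt{2\pi}}e^{-\zeta^2}$, not the expression with $e^x$ and $e^{-\zeta^2-(x-k)/2}$.
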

\begin{proof}
See Proposition 3.5 in \cite{LPP2}.
\end{proof}

\begin{lemma}\label{ste:l3}
For any 
$m{,q,n}\in \mathbb{N}_0$ {with $m+q+n>0$} we have
\begin{align}\label{eq:ste02}
 \left|\partial^n_{\s}{\partial^q_{\t}}\p_{k}^{m} \ub(\s;\t,x,k)\right|&\le Ce^{k} { \s^{-n{+2q}}} \left(\s\sqrt{\t}\right)^{1-m{-2q}}
 ,\qquad x,k \in \R, \quad {0<\s\sqrt{\t} \leq M,}
\end{align}
where $C$ is a positive constant only dependent on {$m$, $q$, $n$ and $M$. If $q=0$, then $C$ is
independent of $M$.}.
\end{lemma}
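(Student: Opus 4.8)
The plan is to reduce \eqref{eq:ste02} in two steps to estimates on pure $x$-derivatives of $\ub$, for which Lemma~\ref{ste:l2} and the explicit ratio formula of Proposition~\ref{p1} are available. First I would eliminate the $\t$-derivatives. As a function of $(\t,x)$ with $\s,k$ frozen, $\ub$ solves the Black\&Scholes heat equation $\partial_\t\ub=\tfrac{\s^{2}}{2}\bigl(\partial_x^{2}-\partial_x\bigr)\ub$; since $\s$ is only a parameter and $\partial_x$ commutes with $\partial_\t$, iterating yields $\partial_\t^{q}\ub=\bigl(\tfrac{\s^{2}}{2}\bigr)^{q}\bigl(\partial_x^{2}-\partial_x\bigr)^{q}\ub$, where $(\partial_x^{2}-\partial_x)^{q}=\partial_x^{q}(\partial_x-1)^{q}$ has order between $q$ and $2q$. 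Pulling $\s^{2q}$ out of the $\s$-derivatives by Leibniz (only $\partial_\s^{i}\s^{2q}=\tfrac{(2q)!}{(2q-i)!}\s^{2q-i}$, $0\le i\le\min(n,2q)$, survives), this expresses $\partial_\s^{n}\partial_\t^{q}\partial_k^{m}\ub$ as a finite combination of terms $\s^{2q-i}\,\partial_x^{j}\partial_\s^{n-i}\partial_k^{m}\ub$ with $q\le j\le 2q$. Note that each spare factor $\s^{2q-i}$ will combine with the $\s^{-(n-i)}$ produced in the next step to give exactly $\s^{-n+2q}$, the index $i$ cancelling.

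Next, the heart of the matter is the $q=0$ bound with spare $x$-derivatives: for all $n',m,j\in\N_{0}$ with $m+n'>0$,
\[
 \bigl|\partial_x^{j}\partial_\s^{n'}\partial_k^{m}\ub(\s;\t,x,k)\bigr|\le C\,e^{k}\,\s^{-n'}\,(\s\sqrt\t)^{1-m-j}.
\]
Using $\partial_k=1-\partial_x$ (eq.~\eqref{eq:ste33}) I would replace $\partial_k^{m}$ by $(1-\partial_x)^{m}$, so it is enough to control $\partial_x^{j'}\partial_\s^{n'}\ub$ for $j\le j'\le j+m$, the worst exponent being $1-m-j$. The case $n'=0$ is Lemma~\ref{ste:l2}; for $n'=1$, $\partial_\s\ub$ is the vega $\tfrac{e^{k}\sqrt\t}{\sqrt{2\pi}}e^{-\zeta^{2}}$ with $\zeta=\tfrac{x-k-\s^{2}\t/2}{\s\sqrt{2\t}}$ — here the prefactor $e^{k}$ (rather than $e^{x}$) originates from $e^{x}\Nc'(d_+)=e^{k}\Nc'(d_-)$ — and for $n'\ge2$ Proposition~\ref{p1} writes $\partial_\s^{n'}\ub/\partial_\s\ub$ as a finite sum of terms $c\,\s^{a}\t^{b}(\s\sqrt{2\t})^{-c}\mathbf{H}_{c}(\zeta)$ whose exponents are coupled so that the $\s$- and $\t$-weights come out right. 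In all cases, since $\ub$ depends on $x$ only through $\zeta$, $\partial_x^{j'}$ acts as $(\s\sqrt{2\t})^{-j'}\partial_\zeta^{j'}$, and the stated bound follows from $\sup_{\zeta}e^{-\zeta^{2}}|\mathbf{H}_{c}(\zeta)|<\infty$ (standard Gaussian estimates, Lemmas~\ref{lemm:homogeneous_gaussian_estimate}--\ref{lemm:homogeneous_gaussian_estimate_deriv}) after collecting the powers of $\s$ and of the \emph{bounded} quantity $\s\sqrt\t$.

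Finally, substituting the second step into the first, the term $j=2q$ yields $(\s\sqrt\t)^{1-m-2q}$ while every other term carries a surplus nonnegative power of $\s\sqrt\t\le M$, absorbed into $C$; summing in $i$ and telescoping the $\s$-powers gives \eqref{eq:ste02}. I expect the main obstacle to be precisely this exponent bookkeeping in the second step — verifying that the many powers of $\s$ and $\t$ generated by Proposition~\ref{p1}, by $\partial_x=\tfrac1{\s\sqrt{2\t}}\partial_\zeta$ and by $\partial_k=1-\partial_x$ organize themselves into $\s^{-n'}(\s\sqrt\t)^{1-m-j}$ — and, as a by-product, noticing that when $q=0$ no positive power of $\s\sqrt\t$ ever has to be traded against $M$, so that in that case $C$ depends only on $m$ and $n$, as asserted in the statement.
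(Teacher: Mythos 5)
Your overall strategy — use the Black\&Scholes heat identity to trade $\partial_\tau$ for spatial derivatives, then apply Leibniz to $\partial_\sigma^n\big((\sigma^2/2)^q\,\cdot\big)$ and reduce to the $q=0$ case via the vega formula and Proposition~\ref{p1} — is the same as the paper's. But routing through pure $x$-derivatives opens a genuine gap around the $e^k$ prefactor.

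Here is the problem. Once you expand $(\partial_x^2-\partial_x)^q$ into $\sum_l \binom{q}{l}(-1)^{q-l}\partial_x^{q+l}$ and convert $\partial_k^m$ to $(1-\partial_x)^m$, you are left with terms of the form $\sigma^{2q-i}\,\partial_x^{j'}\partial_\sigma^{n-i}\ub$. When $i=n$ and the original $m=0$ (e.g.\ when $m=n=0$, $q\geq1$) the $\sigma$-order $n'=n-i$ drops to zero, and this term is not covered by your intermediate claim, which requires $m+n'>0$. Nor could it be: $\partial_x^{j} \ub$ for $j\geq1$ grows like $e^x$ (for instance $\partial_x\ub=e^x\Nc(d_+)$), and there is no uniform-in-$x$ bound by $C e^k$. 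The lemma is nonetheless true here:
\begin{equation}
\partial_\tau \ub = \tfrac{\sigma^2}{2}\big(\partial_k^2-\partial_k\big)\ub = \tfrac{\sigma^2}{2}\,\tfrac{e^k}{\sigma\sqrt\tau}\,\Nc'(d_-),
\end{equation}
which shows that the binomial expansion you carry out destroys exactly the cancellation that makes the \emph{sum} of $x$-derivatives be $\mathcal{O}(e^k)$ while each \emph{individual} term is $\mathcal{O}(e^x)$. A milder but real instance of the same issue afflicts your intermediate claim with $n'=0$ and $m\geq 1$: you attribute it to Lemma~\ref{ste:l2}, but that lemma gives $C e^x(\sigma\sqrt\tau)^{(1-m-j)\wedge 0}$, with $e^x$ and a capped exponent — not the $e^k (\sigma\sqrt\tau)^{1-m-j}$ you assert. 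The correct $e^k$ bound for $\partial_x^j\partial_k^m \ub$, $m\geq1$, comes from rewriting $\partial_k^m\ub$ as $\sum_{i\leq m-1}\binom{m-1}{i}e^k\partial_k^i\int_k^\infty\Gbs(\cdot)$ (the paper's Step~1) and then differentiating in $x$, \emph{without} first converting $\partial_k$ to $1-\partial_x$.

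The paper avoids the whole issue by using the $k$-version of the heat identity, $\partial_\tau\ub=\tfrac{\sigma^2}{2}(\partial_k^2-\partial_k)\ub$, writing
\begin{equation}
 \partial_\sigma^n\partial_\tau^q\partial_k^m\ub \;=\; \partial_k^m(\partial_k^2-\partial_k)^q\,\partial_\sigma^n\!\Big(\big(\tfrac{\sigma^2}{2}\big)^q\ub\Big),
\end{equation}
so that after Leibniz the surviving operators are $\sigma^{2q-i}\,\partial_k^{l}\partial_\sigma^{n-i}$ with $l\geq m+q\geq q\geq1$, all of which are handled by the $q=0$ case of \eqref{eq:ste02}, where the $e^k$ prefactor is obtained directly. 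Equivalently, you could repair your own decomposition by not fully expanding: since $\partial_x-1=-\partial_k$ on $\ub$ and its $\sigma$-derivatives, you have $(\partial_x^2-\partial_x)^q=(-1)^q\partial_x^q\partial_k^q$, and keeping the $\partial_k^q$ factor intact guarantees at least one $k$-derivative on every term, hence the $e^k$.
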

\begin{proof}We split the proof in three steps.

{\medskip\noindent {\bf [Step 1: case $q=n=0$]}. \\} Here we will denote by $C$ any generic
constant that depends at most on {$m$}. For any $m\in\N$, by \eqref{eq:BS_convolution} we have
\begin{align}
\p_{k}^{m} \ub(\s;\t,x,k) &
= \p_k^{m-1} \left( e^k \int_k^{\infty} \Gbs\left(\s^2 \t, y-x+\frac{\s^2\t}{2}\right) d y \right)
\\ & = \sum_{i=0}^{m-1} \binom{m-1}{i} e^{k} \partial_{k}^i  \int_k^{\infty} \Gbs\left(\s^2 \t,
y-x+\frac{\s^2\t}{2}\right) d y.\label{eq:ste_estim_dk_BS}
\end{align}
Now, we have $\int_k^{\infty} \Gbs\left(\s^2 \t, y-x+\frac{\s^2\t}{2}\right) d y \in ]0,1[$ and,
for $i\geq 1$, we have
\begin{align}
\partial_{k}^i  \int_k^{\infty} \Gbs\left(\s^2 \t, y-x+\frac{\s^2\t}{2}\right) d y & = -\partial_{k}^{i-1} \Gbs\left(\s^2 \t,
k-x+\frac{\s^2\t}{2}\right).
\end{align}
Thus by applying the Gaussian estimate \eqref{eq:homogeneous_gaussian_estimate_deriv} with $c=2$,
we obtain
\begin{align}
\left|  \partial_{k}^i  \int_k^{\infty} \Gbs\left(\s^2 \t, y-x+\frac{\s^2\t}{2}\right) d y \right|
\leq C \left(\sigma \sqrt{2\tau}\right)^{-i+1}    \Gbs\left(2\s^2 \t, k-x+\frac{\s^2\t}{2}\right)
\leq C \left(\sigma \sqrt{\tau}\right)^{-i},
\end{align}
which, combined with \eqref{eq:ste_estim_dk_BS}, proves \eqref{eq:ste02}.

{\medskip\noindent {\bf [Step 2: case $q=0$, $n\geq 1$]}. \\} Here we will denote by $C$ any
generic constant that depends at most on {$m$ and $n$}. A direct computation shows
\begin{equation}\label{ae4}
 \partial_{\sigma}u^{\BS}(\sigma;\t,x,k)
  =e^k \sigma \t  \Gbs\left(\s^2 \t, x-k-\frac{\s^2\t}{2}\right)=e^k \sqrt{\t}\,\Gbs\left(1, \zeta\right),
\end{equation}
with $\z=\frac{x-k-\frac{\s^{2}\t}{2}}{\s\sqrt{2\t}}$. Therefore we have
\begin{align}\label{eq:estim_vega}
  0< \partial_{\sigma}u^{\BS}(\sigma;\t,x,k)\leq \frac{e^k \sqrt{\t} }{\sqrt{2 \pi
  }}, \qquad x,k\in\R,\ \s,\t\in\R_{>0},
\end{align}
which proves \eqref{eq:ste02} for $n=1$ and $m=0$. { Notice that
\begin{equation}\label{a10}
  \left|\p_{k}^{m}\Gbs\left(1, \zeta\right)\right|=\frac{1}{\left(\s\sqrt{2\t}\right)^{m}}\left|\p_{\zeta}^{m}\Gbs\left(1,
  \zeta\right)\right|\le C\left(\s\sqrt{\t}\right)^{-m},\qquad m\in\N_{0},
\end{equation}
where the last inequality follows from \eqref{eq:homogeneous_gaussian_estimate_deriv}.} Then, by
differentiating \eqref{ae4}, it is straightforward to show that
\begin{equation}\label{ae5}
  \left|\p_{\s}\p_{k}^{m}\ub(\s;\t,x,k)\right|\le C
  e^{k}\sqrt{\t}\left(\s\sqrt{\t}\right)^{-m},\qquad   m\in\N_0.
\end{equation}
For $n\geq 2$, by combining Proposition \ref{p1} with \eqref{ae4}, we have {
\begin{equation}\label{ae7}
\begin{split}
 \p_\sig^n \ub(\s;\t,x,k) =\,&  e^k \sqrt{\t}\sum_{q=0}^{\left\lfloor n/2 \right\rfloor}\sum_{p=0}^{n-q-1}
            c_{n,n-2q}\sig^{n-2q-1} \tau^{n-q-1} \binom{n-q-1}{p}\cdot\\
            &\cdot\(\frac{1}{\sig\sqrt{2\tau}}\)^{p+n-q-1} \Gbs\left(1, \zeta\right)\mathbf{H}_{p+n-q-1}(\zeta).
\end{split}
\end{equation}
Now notice that
\begin{align}\label{ae8}
  \left|\p_{k}^{m} \left(\Gbs\left(1, \zeta\right) \mathbf{H}_{p}(\zeta)\right)\right|&=\left|\p_{k}^{m} \p^{p}_{\zeta}\Gbs\left(1, \zeta\right) \right|
  =\frac{1}{\left(\s\sqrt{\t}\right)^{m}}\left|\p^{m+p}_{\zeta}\Gbs\left(1, \zeta\right) \right|
  \le C \left(\s\sqrt{\t}\right)^{-m}.
\end{align}
}
Then the thesis follows by differentiating formula \eqref{ae7} and using \eqref{ae8}.

{\medskip\noindent {\bf [Step 3: case $q\geq 1$]}. \\ Here we will denote by $C$ any generic
constant that depends at most on {$m$, $q$, $n$ and $M$}. By applying the identity
\begin{equation}
\partial_{\tau}u^{\BS}(\sigma;\t,x,k) = \frac{\s^2}{2}\big( \partial^2_x - \partial^2_x \big) u^{\BS}(\sigma;\t,x,k) = \frac{\s^2}{2}\big( \partial^2_k - \partial^2_k \big) u^{\BS}(\sigma;\t,x,k)
\end{equation}
we get
\begin{equation}
\partial^n_{\s}{\partial^q_{\t}}\p_{k}^{m} \ub(\s;\t,x,k) = \partial_k^m \big( \partial^2_k - \partial_k \big)^q\, \partial_{\s}^n \bigg(  \bigg(\frac{\s^2}{2} \bigg)^q \ub(\s;\t,x,k)  \bigg).
\end{equation}
The statement now follows by applying Faa di Bruno's formula (Proposition \ref{prop:multivariate_faa}) along with \eqref{eq:ste02} for $q=0$.
}

\end{proof}

\section{Explicit representation for the volatility expansion}\label{app:un_represent_theorem}
Here we recall an explicit representation formula for the $n$-th order correcting terms $u_n$ and
$\s_n$ appearing in the price expansion \eqref{eq:v.expand} and the implied volatility expansion
\eqref{ae6}, respectively. The following result is a particular case of \cite[Theorem 3.2]{LPP4}.
\begin{theorem}\label{th:un_general_repres}
Let $N\in\N$, $\bar{z}\in\R^d$ and assume that $D^{\beta}_z a_{\alpha}(\cdot,\bar{z})\in
L^{\infty}([0,T])$ for any $1\leq|\alpha|\leq 2$ and $|\beta|\leq N$. Then, for any $1\leq n\leq
N$, the function $u_n$ in \eqref{eq:v.n.pide} is given by
\begin{align}\label{eq:un}
u^{(\bar{z})}_n(t,z)
    &=  \Lc^{(\bar{z})}_n(t,T,z) u^{(\bar{z})}_0(t,z),\qquad t\in[0,T[,\ z\in\mathbb{R}^d.
\end{align}
In \eqref{eq:un}, $\Lc^{(\bar{z})}_n(t,T,z)$ denotes the differential operator acting on the
$z$-variable and defined as
\begin{align}\label{eq:def_Ln}
 \Lc_n^{(\bar{z})}(t,T,z):=  \sum_{h=1}^n \int_{t}^T d s_1 \int_{s_1}^T d s_2 \cdots \int_{s_{h-1}}^T d s_h
      \sum_{i\in I_{n,h}}\Gc^{(\bar{z})}_{i_{1}}(t,s_1,z) \cdots \Gc^{(\bar{z})}_{i_{h}}(t,s_h,z) ,
\end{align}
where\footnote{For instance, for $n=3$ we have $I_{3,3}=\{(1,1,1)\}$, $I_{3,2}=\{(1,2),(2,1)\}$
and $I_{3,1}=\{(3)\}$. }
\begin{align}\label{eq:def_Ln_bis}
 I_{n,h}
    =  \{i=(i_{1},\dots,i_{h})\in\mathbb{N}^{h} \mid i_{1}+\dots+i_{h}=n\} ,\qquad 1 \le h \le n ,
\end{align}
and the operator $\Gc^{(\bar{z})}_{n}(t,s,z)$ is defined as
\begin{align}\label{def_Gn}
\Gc^{(\bar{z})}_{n}(t,s,z)
    &:= \Ac^{(\bar{z})}_n\big(s,z-\bar{z}+\mv^{(\bar{z})}(t,s)+ \Cv^{(\bar{z})}(t,s)\nabla_{z}\big),
\end{align}
with $\mv^{(\bar{z})}(t,s)$ and $\Cv^{(\bar{z})}(t,s)$ being, respectively, the vector and the
matrix whose components are given by
\begin{align}
 \mv^{(\bar{z})}_i(t,s) = \int_t^s a_i (r,\bar{z}) d r,\qquad  \Cv^{(\bar{z})}_{ij}(t,s) =  \int_t^s a_{ij} (r,\bar{z}) d r,\qquad  i,j=1,\dots,d.
\end{align}
\end{theorem}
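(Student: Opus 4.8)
The plan is to read off $u^{(\bar z)}_n$ from the nested Cauchy problems \eqref{eq:v.0.pide}--\eqref{eq:v.n.pide} by Duhamel's formula, and then to transport the polynomial‑coefficient operators $\Ac^{(\bar z)}_{t,h}$ through the Gaussian propagator of the leading operator $\Ac_{t,0}$. Let $\Pc^{(0)}_{t,s}$, $0\le t\le s\le T$, be the two‑parameter evolution operator of $\big(\p_t+\Ac_{t,0}\big)$: since $\Ac_{t,0}$ is uniformly elliptic with bounded, space‑independent coefficients (Assumption \ref{assum1and} and the hypothesis $D^{\beta}a_{\alpha}(\cdot,\bar z)\in L^{\infty}([0,T])$), $\Pc^{(0)}_{t,s}$ is convolution against the Gaussian density $q^{(0)}_{t,s}(z,\cdot)$ with mean $z+\mv^{(\bar z)}(t,s)$ and covariance $\Cv^{(\bar z)}(t,s)$, and it obeys Chapman--Kolmogorov, $\Pc^{(0)}_{t,s_1}\Pc^{(0)}_{s_1,s_2}=\Pc^{(0)}_{t,s_2}$. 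From \eqref{eq:v.0.pide} one has $u^{(\bar z)}_0(t,\cdot)=\Pc^{(0)}_{t,T}\varphi$ with $\varphi:=(e^{x}-e^{k})^{+}$ (which recovers \eqref{e10}, the problem collapsing to a one‑dimensional Black--Scholes equation in $x$), and variation of parameters applied to \eqref{eq:v.n.pide} gives
\begin{equation}
u^{(\bar z)}_n(t,\cdot)=\int_t^T\Pc^{(0)}_{t,s}\Big(\sum_{h=1}^{n}\Ac^{(\bar z)}_{s,h}\,u^{(\bar z)}_{n-h}(s,\cdot)\Big)\,ds .
\end{equation}
Iterating this down to $u^{(\bar z)}_0$, and interchanging the auxiliary time integrals with the spatial operators (Fubini), one is led to the iterated formula, with $I_{n,\ell}$ as in \eqref{eq:def_Ln_bis},
\begin{equation}
u^{(\bar z)}_n(t,\cdot)=\sum_{\ell=1}^n\sum_{i\in I_{n,\ell}}\int_t^T\!\! ds_1\!\int_{s_1}^T\!\! ds_2\cdots\!\int_{s_{\ell-1}}^T\!\! ds_\ell\;\Pc^{(0)}_{t,s_1}\Ac^{(\bar z)}_{s_1,i_1}\Pc^{(0)}_{s_1,s_2}\Ac^{(\bar z)}_{s_2,i_2}\cdots\Pc^{(0)}_{s_{\ell-1},s_\ell}\Ac^{(\bar z)}_{s_\ell,i_\ell}\,u^{(\bar z)}_0(s_\ell,\cdot).
\end{equation}

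The key step will be the commutation identity $\Pc^{(0)}_{t,s}\,\Ac^{(\bar z)}_{s,h}=\Gc^{(\bar z)}_{h}(t,s,z)\,\Pc^{(0)}_{t,s}$, with $\Gc^{(\bar z)}_h$ as in \eqref{def_Gn}, valid on a suitable class of functions of at most exponential growth (containing $\varphi$ and its images under these operators). Writing $\Ac^{(\bar z)}_{s,h}=\sum_{|\beta|=h}\sum_{1\le|\alpha|\le2}\tfrac{D^{\beta}a_{\alpha}(s,\bar z)}{\beta!}(\cdot-\bar z)^{\beta}\p^{\alpha}$, one treats the derivative and the polynomial factor separately: integration by parts against $q^{(0)}_{t,s}$, together with $\p_{\zeta_i}q^{(0)}_{t,s}=-\p_{z_i}q^{(0)}_{t,s}$, yields $\Pc^{(0)}_{t,s}\big[\p^{\alpha}\psi\big]=\p^{\alpha}_z\Pc^{(0)}_{t,s}\psi$; and the elementary Gaussian shift $(\zeta_i-\bar z_i)\,q^{(0)}_{t,s}(z,\zeta)=\big(z_i-\bar z_i+\mv^{(\bar z)}_i(t,s)+(\Cv^{(\bar z)}(t,s)\nabla_z)_i\big)q^{(0)}_{t,s}(z,\zeta)$, iterated over $\beta$ and followed by pulling the resulting $z$‑differential operator out of the $\zeta$‑integral, yields $\Pc^{(0)}_{t,s}\big[(\cdot-\bar z)^{\beta}\psi\big]=\big(z-\bar z+\mv^{(\bar z)}(t,s)+\Cv^{(\bar z)}(t,s)\nabla_z\big)^{\beta}\Pc^{(0)}_{t,s}\psi$. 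Summing these over $\alpha,\beta$ against the Taylor coefficients of $\Ac^{(\bar z)}_{s,h}$ reproduces precisely $\Gc^{(\bar z)}_h(t,s,z)\Pc^{(0)}_{t,s}$ as defined by \eqref{eq:A.expand} and \eqref{def_Gn}.

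With this identity, each propagator in the iterated formula is pushed to the right past the next $\Ac^{(\bar z)}$‑block (converting it to a $\Gc^{(\bar z)}_{i_j}(t,s_j,z)$‑block, all based at the initial time $t$) and then merged with the following propagator by Chapman--Kolmogorov; after $\ell$ steps only $\Pc^{(0)}_{t,s_\ell}u^{(\bar z)}_0(s_\ell,\cdot)$ survives, and this equals $\Pc^{(0)}_{t,T}\varphi=u^{(\bar z)}_0(t,\cdot)$. Hence
\begin{equation}
u^{(\bar z)}_n(t,z)=\sum_{\ell=1}^n\sum_{i\in I_{n,\ell}}\int_t^T\!\! ds_1\cdots\!\int_{s_{\ell-1}}^T\!\! ds_\ell\;\Gc^{(\bar z)}_{i_1}(t,s_1,z)\cdots\Gc^{(\bar z)}_{i_\ell}(t,s_\ell,z)\,u^{(\bar z)}_0(t,z),
\end{equation}
which is exactly $\Lc^{(\bar z)}_n(t,T,z)u^{(\bar z)}_0(t,z)$ by \eqref{eq:def_Ln}, proving the theorem.

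I expect the algebra above to be the easy part, and the main obstacle to lie in the functional‑analytic bookkeeping: one has to fix a function class (for instance, $C^{\infty}$ functions with derivatives dominated by $e^{x}$ times a Gaussian, which contains the Black--Scholes‑type $u^{(\bar z)}_0$ and is stable under each $\Gc^{(\bar z)}_h$, whose coefficients are polynomials in $z$ with time‑bounded coefficients) inside which the $u^{(\bar z)}_n$ coming from the Cauchy problems coincide with their Duhamel representatives and all the integrations by parts, differentiations under the integral sign, applications of Fubini, and nested simplex integrals are absolutely convergent and legitimate. One must also fix once and for all an ordering convention for the non‑commuting scalar‑and‑derivative symbols $z_i-\bar z_i+\mv^{(\bar z)}_i+(\Cv^{(\bar z)}\nabla_z)_i$ entering $\Gc^{(\bar z)}_h$; this is harmless, since they act on the symmetric monomial $(\zeta-\bar z)^{\beta}$ and thus give the same image in any order, so that \eqref{eq:def_Ln}--\eqref{def_Gn} is unambiguous. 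As noted in the statement, this is in any case a special case of \cite[Theorem 3.2]{LPP4}.
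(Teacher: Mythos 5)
The paper itself gives no proof of this theorem; it only cites \cite[Theorem 3.2]{LPP4}, so there is nothing in the text to compare against. Your blind reconstruction is correct: the Duhamel iteration of the nested Cauchy problems, the intertwining relation $\Pc^{(0)}_{t,s}\,\Ac^{(\bar z)}_{s,h}=\Gc^{(\bar z)}_{h}(t,s,z)\,\Pc^{(0)}_{t,s}$ obtained from integration by parts and the Gaussian shift, and the Chapman--Kolmogorov collapse of the propagators are exactly the ingredients that produce \eqref{eq:def_Ln}, and this is (as you say) the route taken in \cite{LPP4}.

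One small factual slip in your bookkeeping remark: the symbols
$\Lambda_i:=z_i-\bar z_i+\mv^{(\bar z)}_i(t,s)+\big(\Cv^{(\bar z)}(t,s)\nabla_z\big)_i$
are not in fact non-commuting. A direct computation gives
\begin{equation}
[\Lambda_i,\Lambda_j]=\big[z_i,(\Cv^{(\bar z)}\nabla)_j\big]+\big[(\Cv^{(\bar z)}\nabla)_i,z_j\big]
=-\Cv^{(\bar z)}_{ji}+\Cv^{(\bar z)}_{ij}=0,
\end{equation}
by symmetry of $\Cv^{(\bar z)}(t,s)$. Hence $(z-\bar z+\mv^{(\bar z)}+\Cv^{(\bar z)}\nabla_z)^{\beta}$ is unambiguous without any appeal to acting on a symmetric monomial; the definition \eqref{def_Gn} needs no ordering convention. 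Your argument reaches the right conclusion, but the cleaner reason is this commutativity, not the symmetry of $(\zeta-\bar z)^{\beta}$.
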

\begin{corollary}\label{lem:ste10}
{Let $N\in\N_0$, and let Assumption \ref{assum2and2} be in force. Then, f}or any {$n,m,q \in \N_0$
with $n,2q\leq N$, 
and for any multi-index $\a \in\mathbb{N}_0^{d}$, we have
\begin{equation}\label{eq:ste37}
{\partial^q_T} \partial^m_k D^{\a}_{z} u^{(\bar{z})}_n (t,z;T,k) = \sum_{0\leq |\gamma|\leq n
\atop {1\leq j \leq 3n}} { f^{ (n,q,m,\a)}_{\gamma,j}}(\bar{z};t,T) (z-\bar{z})^{\gamma}
 \partial_{z_1}^{{j+m+2q+\a_1}} 
 u^{(\bar{z})}_0(t,z;T,k),
\end{equation}
with
\begin{equation}\label{eq:ste36b}
 \left|f^{{(n,q,m,\a)}}_{\gamma,j}(\bar{z};t,T)\right|  \leq C {M^{q}}(M(T-t))^{\frac{{n-|\gamma|+j}}{2}},
\end{equation}
for any $0\leq t<T< {T_0}$, {$z,\bar{z}\in D(z_0,r)$} and $k\in\R$. Consequently, we have
\begin{equation}\label{eq:ste36cbis}
 \left|{\partial^q_T}\partial^m_k u^{(z)}_0 (t,z;T,k)\right|  \leq C e^{x}{M^{q}}(M(T-t))^{\frac{(1-m{ -2 q})\wedge 0}{2}}.
\end{equation}
and, for $n\geq 1$,
\begin{equation}\label{eq:ste36c}
 \left|{\partial^q_T}\partial^m_k u^{(z)}_n (t,z;T,k)\right|  \leq C e^{x}{M^{q}}(M(T-t))^{\frac{n+1-m{ -2 q}}{2}}.
\end{equation}
In \eqref{eq:ste36b}, \eqref{eq:ste36cbis} and \eqref{eq:ste36c}, $C$ is a positive constant only
dependent on $\e,\cem, T_{0}, N, |\a|$ and $m$.
}\end{corollary}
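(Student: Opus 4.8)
The plan is to read the statement off from the representation of Theorem~\ref{th:un_general_repres} by expanding the operator $\Lc^{(\bar{z})}_{n}$, applying it to the explicit Gaussian‑type function $u^{(\bar{z})}_{0}$, and then estimating everything by elementary time‑integral bounds and the Black\&Scholes sensitivity estimates of Lemma~\ref{ste:l2}. First I would invoke Theorem~\ref{th:un_general_repres} — its hypothesis ($D^{\beta}_{z}a_{\alpha}(\cdot,\bar{z})\in L^{\infty}([0,T])$ for $|\alpha|\le 2$, $|\beta|\le N$) holds because Assumption~\ref{assum2and2} makes the coefficients $a_{\alpha}$ of $\Ac_{t}$ of class $C^{N+1}$ with all derivatives up to order $N+1$ bounded by $CM$ on the cylinder — to write $u^{(\bar{z})}_{n}(t,z;T,k)=\Lc^{(\bar{z})}_{n}(t,T,z)\,u^{(\bar{z})}_{0}(t,z;T,k)$ with $\Lc^{(\bar{z})}_{n}$ as in \eqref{eq:def_Ln}--\eqref{def_Gn}. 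Then I would record two reductions of the outer derivatives. Since $\Lc^{(\bar{z})}_{n}$ is $k$‑independent and $\partial_{k}u^{\BS}=u^{\BS}-\partial_{x}u^{\BS}$, the operator $\partial_{k}^{m}$ turns into a combination of $\partial_{z_{1}}^{l}$, $l\le m$, acting on $u^{(\bar{z})}_{0}$. Since $u^{\BS}(\sigma;\tau,x,k)$ depends on $(\sigma,\tau)$ only through $v=\sigma^{2}\tau$ with $\partial_{v}u^{\BS}=\tfrac12(\partial_{x}^{2}-\partial_{x})u^{\BS}$, and since $(\sigma^{(\bar{z})}_{0})^{2}(T-t)=\int_{t}^{T}a_{11}(\cdot,\bar{z})$ by \eqref{e10}, one gets $\partial_{T}u^{(\bar{z})}_{0}=\tfrac12 a_{11}(T,\bar{z})(\partial_{z_{1}}^{2}-\partial_{z_{1}})u^{(\bar{z})}_{0}$, so that $\partial_{T}^{q}u^{(\bar{z})}_{0}$ is a combination of $(\partial_{z_{1}}^{2}-\partial_{z_{1}})^{p}u^{(\bar{z})}_{0}$, $p\le q$, with coefficients built from $a_{11}(T,\bar{z})$ and its $T$‑derivatives, hence of size $O(M^{q})$ up to a bounded power of $T-t\le T_{0}$. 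Differentiating the time integrals in $\Lc^{(\bar{z})}_{n}$ in $T$ merely trims one integration and inserts a factor $\Gc^{(\bar{z})}_{i_{h}}(t,T,z)$; and since $u^{(\bar{z})}_{0}$ depends on $z$ only through $z_{1}$, in $D_{z}^{\alpha}(\Lc^{(\bar{z})}_{n}u^{(\bar{z})}_{0})$ every $\partial_{z_{l}}$ with $l\ge 2$ must land on a polynomial coefficient, leaving at most $\alpha_{1}$ copies of $\partial_{z_{1}}$ for $u^{(\bar{z})}_{0}$.

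Next I would expand the operators. Writing out $(z-\bar{z}+\mv^{(\bar{z})}(t,s)+\Cv^{(\bar{z})}(t,s)\nabla_{z})^{\beta}$ by the multinomial theorem shows, via \eqref{eq:A.expand} and \eqref{def_Gn}, that each $\Gc^{(\bar{z})}_{i}(t,s,z)$ ($i=|\beta|$) is a finite sum of operators $c\,D^{\beta}a_{\alpha}(s,\bar{z})\,[\text{monomial in the entries of }\mv^{(\bar{z})}(t,s),\Cv^{(\bar{z})}(t,s)]\,(z-\bar{z})^{\gamma'}\partial_{z}^{\mu'}$ with $|\gamma'|\le i$, $|\mu'|\le i+2$, and exactly $i-|\gamma'|$ factors of $\mv^{(\bar{z})},\Cv^{(\bar{z})}$. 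Composing $h\le n$ of these with $i_{1}+\cdots+i_{h}=n$, carrying out the Leibniz rule (outer derivatives hitting inner polynomial coefficients only lower their degree), and collecting terms according to the surviving monomial $(z-\bar{z})^{\gamma}$ and the derivative order, $\Lc^{(\bar{z})}_{n}(t,T,z)$ becomes a finite sum of operators $f_{\gamma,j}(\bar{z};t,T)(z-\bar{z})^{\gamma}\partial_{z}^{\mu}$ with $|\gamma|\le n$ and $|\mu|\le 2h+\sum_{k}c'_{k}\le 3n$ (using $\sum_{k}c'_{k}\le n$ and $h\le n$), where $f_{\gamma,j}$ is the corresponding nested time integral. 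Applying $\partial_{z}^{\mu}$ to $u^{(\bar{z})}_{0}$ collapses $\mu$ to $|\mu|$ copies of $\partial_{z_{1}}$, and merging with the at most $m$, $2q$ and $\alpha_{1}$ extra $z_{1}$‑derivatives from the first step gives the form \eqref{eq:ste37}, with $j\in\{1,\dots,3n\}$ labelling the derivative order of $\Lc^{(\bar{z})}_{n}$ acting on $u^{(\bar{z})}_{0}$ (the case $n=0$, where $\Lc^{(\bar{z})}_{0}=\Id$, being immediate from the two reductions).

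For the bounds I would estimate the nested time integral: each $D^{\beta}a_{\alpha}(s_{k},\bar{z})$ is $O(M)$, each entry of $\mv^{(\bar{z})}(t,s_{k})$, $\Cv^{(\bar{z})}(t,s_{k})$ is $O(M(s_{k}-t))$, and $\int_{t}^{T}ds_{1}\int_{s_{1}}^{T}ds_{2}\cdots\int_{s_{h-1}}^{T}ds_{h}\prod_{k}(s_{k}-t)^{e_{k}}=C(T-t)^{h+\sum_{k}e_{k}}$; matching the resulting powers of $M$ and of $(T-t)$ against the polynomial degree $|\gamma|$ used up and the derivative order $j$ produced yields $|f_{\gamma,j}|\le CM^{q}(M(T-t))^{(n-|\gamma|+j)/2}$, i.e.\ \eqref{eq:ste36b} (terms carrying factors of $\mv^{(\bar{z})}$ are of strictly higher order in $M(T-t)$ and are absorbed into $C$ up to a bounded power of $M_{0}T_{0}$; the extra $M^{q}$ comes from the $(a_{11}(T,\bar{z})/2)^{q}$‑type factors produced by $\partial_{T}^{q}$). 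Then \eqref{eq:ste36cbis}--\eqref{eq:ste36c} follow from \eqref{eq:ste37} by taking $\bar{z}=z$, which kills every term with $\gamma\ne 0$, leaving $\partial_{T}^{q}\partial_{k}^{m}u^{(z)}_{n}(t,z;T,k)=\sum_{j}f^{(n,q,m,0)}_{0,j}(z;t,T)\,\partial_{z_{1}}^{j+m+2q}u^{(z)}_{0}(t,z;T,k)$ with $|f^{(n,q,m,0)}_{0,j}|\le CM^{q}(M(T-t))^{(n+j)/2}$; since by \eqref{e10} and Assumption~\ref{assum2and2} one has $\sqrt{2\eps M}\le\sigma^{(z)}_{0}\le\sqrt{2M}$, Lemma~\ref{ste:l2} gives $|\partial_{z_{1}}^{p}u^{(z)}_{0}|\le Ce^{x}(M(T-t))^{((1-p)\wedge 0)/2}$, and with $p=j+m+2q$ one checks that for $n\ge 1$ (hence $j\ge 1$) the exponent $\tfrac{n+j}{2}+\tfrac{(1-j-m-2q)\wedge 0}{2}$ reduces to $\tfrac{n+1-m-2q}{2}$ independently of $j$, which is \eqref{eq:ste36c}, while $n=0$ gives \eqref{eq:ste36cbis} directly.

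I expect the main obstacle to be purely the combinatorial bookkeeping of the middle steps: following, through the multinomial expansion of the $h$ composed operators $\Gc^{(\bar{z})}_{i_{k}}$ and the Leibniz rule, the exact interplay between the surviving polynomial degree $|\gamma|$, the derivative order $j$ on $u^{(\bar{z})}_{0}$, and the power of $M(T-t)$ carried by $f_{\gamma,j}$, and verifying that it always collapses to the single exponent $\tfrac{n-|\gamma|+j}{2}$ no matter how the multinomial and Leibniz terms distribute. The clean way to organise it is the parabolic scaling heuristic — a factor $(z-\bar{z})$ counting as $(M(T-t))^{1/2}$, each $\partial_{z}$ as $(M(T-t))^{-1/2}$, and each entry of $\mv^{(\bar{z})}$, $\Cv^{(\bar{z})}$ or time‑integration unit as $M(T-t)$ — which renders all these exponents manifestly consistent, so that once the estimate is checked on a single term it propagates through the whole sum.
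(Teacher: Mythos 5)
Your argument follows the paper's own line essentially step for step: invoke Theorem \ref{th:un_general_repres}, expand $\Lc^{(\bar z)}_n$ applied to $u^{(\bar z)}_0$ (which depends on $z$ only through $z_1$) to get a representation of the form \eqref{eq:ste35}–\eqref{eq:ste36}, then propagate $\partial_k^m$, $\partial_T^q$ and $D_z^\alpha$ via the identities \eqref{eq:ste33} and \eqref{eq:ste33_bis}, and conclude with Lemma \ref{ste:l2}. The paper compresses the combinatorial middle step into ``it is straightforward to prove,'' and the content you supply — the multinomial expansion of the nested $\Gc^{(\bar z)}_{i_k}$'s, the Leibniz bookkeeping, and the parabolic-scaling count of powers of $M(T-t)$ — is exactly the verification being elided there, so the proposal is correct and matches the paper's route.
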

\proof Using the explicit formulas \eqref{eq:un}-\eqref{eq:def_Ln} and noting that
$u^{(\bar{z})}_0(t,z;T,k)$ does not depend on $z_{2},\dots,z_{d}$, it is straightforward to prove
that
\begin{equation}\label{eq:ste35}
 u^{(\bar{z})}_n (t,z;T,k) = \sum_{|\gamma|\leq n \atop 0\leq j \leq 3n} {f}^{(n)}_{\gamma,j}(\bar{z};t,T) (z-\bar{z})^{\gamma}\partial_{z_1}^{j} u^{(\bar{z})}_0(t,z;T,k),
\end{equation}
with 
\begin{equation}\label{eq:ste36}
 \big|{\partial^i_T}{f}^{(n)}_{\gamma,j}(\bar{z};t,T)\big|  \leq C {M^{i}}(M(T-t))^{\frac{{n-|\gamma|+j-2i}}{2}},\qquad 0\leq 2i\leq
 N.
\end{equation}
The general statement now follows from \eqref{eq:ste35}-\eqref{eq:ste36} along with the identities
\eqref{eq:ste33} and
\begin{equation}\label{eq:ste33_bis}
\partial_T u^{(\bar{z})}_0(t,z;T,k) = \frac{a_{11}(T,\bar{z})}{2}\big(\partial^2_{z_1}-\partial_{z_1}\big) u^{(\bar{z})}_0(t,z;T,k).
\end{equation}
Estimate \eqref{eq:ste36cbis} follows from Lemma \ref{ste:l2}. By combining
\eqref{eq:ste37} with \eqref{eq:ste01} eventually we get estimate \eqref{eq:ste36c}.
\endproof

Furthermore, we recall the following result \cite[Proposition 3.6]{LPP2}.
\begin{proposition}
\label{prop.un}
For every $n \in \mathbb{N}$ and $\bar{z}\in\R^d$, the ratio ${u^{(\bar{z})}_n}/{\partial_\sig
u^\BS\big(\sig^{(\bar{z})}_0\big)}$ in \eqref{eq:sig.n_bis} is a finite sum of the form
\begin{align}
 \frac{u^{(\bar{z})}_n}{\partial_\sig u^\BS\big(\sig^{(\bar{z})}_0\big)}
    =  \sum_m \({\sig^{(\bar{z})}_0\sqrt{2(T-t)}}\)^{-m} \chi^{(\bar{z})}_{m,n}\, \mathbf{H}_{m}\(\zeta\) ,\qquad   \zeta =\frac{x-k-\frac{1}{2}\sig_0^2 (T-t)}{\sig_0\sqrt{2(T-t)}}  \label{eq:form}
\end{align}
for any $t<T$, $z=(x,y)\in \R^d$ and $k\in\R$, where the coefficients $\chi^{(\bar{z})}_{m,n}=\chi^{(\bar{z})}_{m,n}(t,z;T,k)$ are explicit functions, polynomial in the log-moneyness $(k-x)$. Here, $\mathbf{H}_m$ represents the $m$-th order Hermite polynomial defined in
\eqref{eq:def_hermite}.
\end{proposition}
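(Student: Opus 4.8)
The plan is to combine the closed-form representation of the price corrections from Theorem~\ref{th:un_general_repres} with the elementary Hermite calculus for the Black\&Scholes vega. First I would use Theorem~\ref{th:un_general_repres} to write $u_n^{(\bar z)}(t,z;T,k)=\Lc_n^{(\bar z)}(t,T,z)\,u_0^{(\bar z)}(t,z;T,k)$, where by \eqref{eq:def_Ln}--\eqref{def_Gn} the operator $\Lc_n^{(\bar z)}$ is a finite sum of iterated time-integrals of compositions $\Gc^{(\bar z)}_{i_1}(t,s_1,z)\cdots\Gc^{(\bar z)}_{i_h}(t,s_h,z)$ with $i_1+\cdots+i_h=n$, and each $\Gc^{(\bar z)}_j$ is the second-order operator $\Ac^{(\bar z)}_j$ with its argument shifted by $\mv^{(\bar z)}(t,s)+\Cv^{(\bar z)}(t,s)\nabla_z$. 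The base term is the Black\&Scholes price $u_0^{(\bar z)}(t,z;T,k)=u^{\BS}\big(\sig^{(\bar z)}_0(t,T);T-t,x,k\big)$, which depends on $z$ only through $x=z_1$ and, since $u^{\BS}(\sigma;\tau,\cdot,\cdot)$ depends on $(\sigma,\tau)$ only through $\sigma^2\tau$, depends on $\big(\sig^{(\bar z)}_0,T-t\big)$ only through $\big(\sig^{(\bar z)}_0\big)^2(T-t)=\Cv^{(\bar z)}_{11}(t,T)$.

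The core of the argument is a structural claim: after expanding $\Lc_n^{(\bar z)}$ and applying it to $u_0^{(\bar z)}$, every resulting term equals (a polynomial in $z-\bar z$, with coefficient an iterated time-integral of derivatives of $a_\alpha(\cdot,\bar z)$) times $\partial_x^p\big[(\partial_x^2-\partial_x)u_0^{(\bar z)}\big]$ for some $p\in\N_0$, with $p$ and the polynomial degree bounded in terms of $n$ --- this is essentially the content of \eqref{eq:ste35}. I would prove it by induction on the number $h$ of composed factors $\Gc^{(\bar z)}_j$. The key algebraic input is the log-Black\&Scholes homogeneity relation $a_1=-\tfrac12 a_{11}$ (equivalently $D^\beta a_1=-\tfrac12 D^\beta a_{11}$ for every $\beta$), which comes from the change of variables $s\mapsto x=\log s$: it forces the $z_1$-part of every $\Ac^{(\bar z)}_j$ to act through the operator $\partial_{z_1}^2-\partial_{z_1}$, so that the first time a $\Gc^{(\bar z)}_j$ hits $u_0^{(\bar z)}$ it produces $\sum(\mathrm{poly})\,\partial_x^q(\partial_x^2-\partial_x)u_0^{(\bar z)}$; any subsequent factor, applied to $(\mathrm{poly}\text{ in }z)\cdot\partial_x^p(\partial_x^2-\partial_x)u_0^{(\bar z)}$, only differentiates the polynomial prefactor and stacks further plain $\partial_x$'s onto the block $\partial_x^p(\partial_x^2-\partial_x)u_0^{(\bar z)}$ (the ``$\Cv^{(\bar z)}\nabla_z$'' inside the shifted argument contributes a $\partial_x$ only when it lands on the function of $x$), so the structure is preserved.

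Next I would translate the block $\partial_x^p\big[(\partial_x^2-\partial_x)u_0^{(\bar z)}\big]$ into the vega. Since $u^{\BS}$ solves the Black\&Scholes PDE in log-variables, $\partial_\sigma u^{\BS}=\sigma\tau(\partial_x^2-\partial_x)u^{\BS}$, whence $(\partial_x^2-\partial_x)u_0^{(\bar z)}=\big(\sig^{(\bar z)}_0(T-t)\big)^{-1}\partial_\sigma u^{\BS}\big(\sig^{(\bar z)}_0\big)$. Writing the vega in its Gaussian form \eqref{ae4}, $\partial_\sigma u^{\BS}(\sigma;\tau,x,k)=e^k\sigma\tau\,\Gbs\big(\sigma^2\tau,\,x-k-\tfrac{\sigma^2\tau}{2}\big)$, and using the Hermite identity $\partial_x^p\Gbs\big(\sigma^2\tau,\,x-k-\tfrac{\sigma^2\tau}{2}\big)=(\sigma\sqrt{2\tau})^{-p}\,\mathbf{H}_p(\zeta)\,\Gbs\big(\sigma^2\tau,\,x-k-\tfrac{\sigma^2\tau}{2}\big)$, with $\zeta$ as in \eqref{eq:form} (immediate from $\mathbf{H}_p(w)=e^{w^2}\partial_w^p e^{-w^2}$ after the substitution $w=\zeta$), one gets $\partial_x^p\big[(\partial_x^2-\partial_x)u_0^{(\bar z)}\big]=\big(\sig^{(\bar z)}_0(T-t)\big)^{-1}\big(\sig^{(\bar z)}_0\sqrt{2(T-t)}\big)^{-p}\mathbf{H}_p(\zeta)\,\partial_\sigma u^{\BS}\big(\sig^{(\bar z)}_0\big)$. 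Substituting this into the expansion of $u_n^{(\bar z)}$ from the previous step and dividing by $\partial_\sigma u^{\BS}\big(\sig^{(\bar z)}_0\big)$ yields \eqref{eq:form}, with $\chi^{(\bar z)}_{m,n}(t,z;T,k)$ the finite sum, over the terms with $p=m$, of $\big(\sig^{(\bar z)}_0(T-t)\big)^{-1}$ times the monomial $(z-\bar z)^\gamma$ times the corresponding iterated time-integral of the $D^\beta a_\alpha(\cdot,\bar z)$; this is explicit and, as claimed, polynomial in the log-moneyness $k-x$, and the outer sum over $m$ is finite since $p$ is bounded by a constant depending only on $n$.

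I expect the main obstacle to be the inductive bookkeeping of the second paragraph: one has to track, through arbitrarily long compositions $\Gc^{(\bar z)}_{i_1}\cdots\Gc^{(\bar z)}_{i_h}$ of second-order operators, the interplay between (i) the multiplications by $z_\ell-\bar z_\ell+\mv^{(\bar z)}_\ell$, (ii) the differentiations $\Cv^{(\bar z)}_{\ell r}\partial_{z_r}$ hidden inside the shifted arguments, and (iii) the genuine second-order part of each $\Ac^{(\bar z)}_j$, verifying that the relation $a_1=-\tfrac12 a_{11}$ --- together with the fact that $u_0^{(\bar z)}$ and all intermediate expressions are of the form (polynomial in $z$) times a function of $x$ --- is precisely what makes every $z_1$-derivative collapse onto the block $\partial_x^p(\partial_x^2-\partial_x)u_0^{(\bar z)}$. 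Once that is in place, the remaining steps are the explicit $\Gbs$/Hermite computation and a routine counting of orders.
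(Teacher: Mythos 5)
The paper does not give its own proof of Proposition \ref{prop.un}; it cites Proposition 3.6 of \cite{LPP2}. Your reconstruction, however, is consistent with the machinery that this paper actually builds (Theorem \ref{th:un_general_repres}, Corollary \ref{lem:ste10}, and the identities \eqref{ae4}, \eqref{eq:ste_400}, \eqref{eq:ste33}, \eqref{eq:ste33_bis}), and the logic is correct. The core algebraic observation you isolate, $a_1=-\tfrac12 a_{11}$ from the logarithmic change of variables, is exactly what makes the $z_1$-part of every $\Ac^{(\bar z)}_n$ collapse onto the drift-corrected block $\partial_{z_1}^2-\partial_{z_1}$, and the vega relation $\partial_\sigma u^{\BS}=\sigma\tau(\partial_x^2-\partial_x)u^{\BS}$ together with $\partial_x^p e^{-\zeta^2}=(\sigma\sqrt{2\tau})^{-p}\mathbf H_p(\zeta)e^{-\zeta^2}$ then converts the block into Hermite polynomials divided by vega, as needed.

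Two points are worth making explicit. First, your structural claim is genuinely \emph{stronger} than the representation \eqref{eq:ste35} already present in the paper, which only records the shape $\sum f^{(n)}_{\gamma,j}(z-\bar z)^\gamma\partial_{z_1}^j u_0^{(\bar z)}$ with $j\ge 0$: a bare $\partial_{z_1}^j u_0$ with $j\le 1$ does \emph{not} reduce to a Hermite polynomial after division by vega, so you really need that every surviving term contains a full factor $\partial_x^p(\partial_x^2-\partial_x)u_0$. Your argument does supply this (the first $\Gc^{(\bar z)}_{i_h}$ that hits $u_0$ produces the block because all cross-derivatives $\partial_{z_1 z_i}$, $i\ge 2$, annihilate $u_0$, and subsequent factors can only stack further $\partial_x$'s or produce combinations $\partial_x^2-\partial_x$ acting on the block), but this refinement should be stated and proved, not taken from \eqref{eq:ste35}. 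Second, the $\chi^{(\bar z)}_{m,n}$ you construct do not depend on $k$ at all, so they are polynomials in $k-x$ only in the trivial degree-zero sense; that is consistent with the statement (the nontrivial $(k-x)$-dependence of $\sigma^{(\bar z)}_n$ comes entirely through $\mathbf H_m(\zeta)$), but the phrasing ``as claimed, polynomial in the log-moneyness'' should be made honest by saying so. Modulo writing out the Leibniz-rule induction carefully (which you correctly flag as the remaining work), the proof is sound.
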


%
%

\section{Multivariate Fa\`a di Bruno's formula and Bell polynomials}\label{append:faa_bell}
In this section we recall a multivariate version of the well-known Fa\`a di Bruno's formula (see
\citet*{Riordan} and \citet*{Johnson}) and more precisely, its Bell polynomial version.

{For greater convenience, we recall some elements of tensorial calculus. For any given $n,h\in
\N$, we denote by $\tens$ a \emph{rank-$h$ tensor on $\R^{n}$}, i.e. an array
$\tens=(\tens_{i})_{i\in \{1,\dots, n\}^h}$, with $\tens_{i}\in\R$. Moreover, by definition a
\emph{rank-$0$ tensor} is a real number, independently of the dimension $n$.}

Let us now fix the dimension $n\in \N$. For any couple of tensors $\tens$, $\tensbis$ of rank
$h_1$ and $h_2$ respectively, we define the \emph{tensorial product} $\tens \otimes \tensbis$ as
the rank-$(h_1+h_2)$ tensor given by
\begin{equation}\label{eq:tens_product}
 \tens \otimes \tensbis_{i_1,\dots,i_{h_1},i_{h_1+1},\dots,i_{h_1+h_2}} =  \tensbis_{i_1,\dots,i_{h_1}} \tens_{i_1,\dots,i_{h_2}} ,\qquad i\in\{1,\dots,n\}^{h_1+h_2}.
\end{equation}
We also set
$\tens^{0}=1$, $\tens^{1}=\tens$ and
 $$
 \tens^{i} := \tens\underbrace{\otimes \tens \otimes \cdots \otimes}_{(i-1) \text{ times}} \tens, \qquad i\ge2. $$
Furthermore, if $\tens$ and
$\tensbis$ have the same rank $h$, we define the \emph{tensorial scalar product}
$\tens\ast\tensbis$ as the rank-$0$ tensor given by
\begin{equation}\label{eq:tens_scal_prod}
 \tens\ast\tensbis = \sum_{i\in\{1,\dots,n\}^h} \tens_i \tensbis_i.
\end{equation}
We say that a rank-$h$ tensor $\tens$ is \emph{symmetric} if $\tens_i =\tens_{\nu(i)}$ for any
$i\in\{1,\dots, n\}^h$ and for any permutation $\nu$ of the indexes $(i_1,\dots,i_h)$.

Consider now a polynomial $p$ in the variables $x=(x_1,\dots,x_j)$, homogeneous of degree $h$, of
the form
{\begin{equation}\label{eq:scalar_prod_pol}
 p(x)=\sum_{\beta\in\N_0^j\atop |\beta|= h}b_{\beta} x_{1}^{\beta_{1}} \cdots x_j^{\beta_j}.
\end{equation}
} For any rank-$h$ symmetric tensor $\tens$ and any family of rank-$1$ tensors
$\{\tensbis_1,\dots,\tensbis_j\}$, it is well defined the
scalar{
\begin{equation}
 \tens\ast p(\tensbis_1,\dots,\tensbis_j) =\tens\ast
 \sum_{\beta\in\N_0^j\atop |\beta|= h}b_{\beta} \tensbis_1^{\beta_1}\otimes \cdots \otimes\tensbis_j^{\beta_j}.
\end{equation}
}Note that, the tensor $p(\tensbis_1,\dots,\tensbis_j)$ is not well-defined on its own because
\emph{the tensorial product \eqref{eq:tens_product} is not commutative}. Nevertheless, by assuming
$\tens$ to be symmetric, the scalar product \eqref{eq:scalar_prod_pol} is well-defined as it does
not depend on the specific order of the tensorial products inside the sum.

We are ready to state the following
\begin{proposition}[Multivariate Fa\`a di Bruno's formula]\label{prop:multivariate_faa}
Let $G:\R\to\R^n$ and $F:\R^n \to \R$ be two smooth functions. Then, for any $m\in\N$ we have
{\begin{align}\label{eq:Faa_di_Bruno_appendix}
 \frac{\dd^m}{d x^m}F(G(x))=\sum_{h=1}^m \left(\nabla^{h}F\right)(G(x))\ast \mathbf{B}_{m,h}\left( \frac{\dd}{d x}G(x),\frac{\dd^2}{d
x^2}G(x),\dots,\frac{\dd^{m-h+1}}{d x^{m-h+1}}G(x) \right),
\end{align}
}where $\nabla^{h}F$ is the rank-$h$ tensor with dimension $n$ of the $h$-th order partial
derivatives of $F$, i.e.
\begin{equation}
 \nabla^{h}F_{i} = \partial_{i_1}\cdots \partial_{i_h} F, \qquad i\in\{1,\dots,n\}^h,
\end{equation}
and $\mathbf{B}_{m,h}$ is the family of the Bell polynomials defined as
\begin{equation}\label{eq:Bell_polyn_appendix}
\mathbf{B}_{m,h}(z)=\sum_{j_1, j_2,\dots, j_{m-h+1}} \frac{m!}{j_1! j_2! \cdots j_{m-h+1}!}\left(
\frac{z_1}{1!} \right)^{j_1} \left( \frac{z_2}{2!} \right)^{j_2}\cdots \left(
\frac{z_{m-h+1}}{(m-h+1)!} \right)^{j_{m-h+1}}, \qquad 1\leq h \leq m,
\end{equation}
where the sum is taken over all sequences $j_1, j_2,\dots, j_{m-h+1}$ of non-negative integers
such that
\begin{align}\label{eq:relation_indexes_bell}
 j_1+j_2+\cdots + j_{m-h+1}=h\quad\text{and}\quad  j_1+2 j_2+\cdots +(m-h+1) j_{m-h+1}=m.
\end{align}
\end{proposition}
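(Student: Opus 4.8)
The plan is to prove \eqref{eq:Faa_di_Bruno_appendix} by a generating-function (formal Taylor series) argument, which makes the combinatorics transparent and isolates the only genuine subtlety --- the non-commutativity of the tensorial product --- reducing it to the symmetry of the tensor $\nabla^{h}F$. Fix $x$. Since $F$ and $G$ are smooth, $\frac{d^m}{dx^m}F(G(x))$ equals $m!$ times the coefficient of $t^m$ in the Taylor polynomial of $t\mapsto F(G(x+t))$ at $t=0$, so it suffices to identify that coefficient. First I would write $G(x+t)=G(x)+v(t)$ with $v(t):=\sum_{k\ge1}\tfrac{t^k}{k!}G^{(k)}(x)$ (which may be truncated at order $m$ without affecting any computation below), and Taylor-expand $F$ about $G(x)$:
\begin{equation*}
 F\bigl(G(x)+v(t)\bigr)=\sum_{h=0}^{m}\frac{1}{h!}\,\bigl(\nabla^{h}F\bigr)(G(x))\ast v(t)^{h}+\mathrm{O}(t^{m+1}),
\end{equation*}
where $v(t)^{h}$ is the $h$-fold tensorial power and $\ast$ the contraction \eqref{eq:tens_scal_prod}; the tensors $\nabla^{h}F$ are symmetric by Schwarz's theorem, so the scalar products in the sense of \eqref{eq:scalar_prod_pol} are well defined.

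Next I would expand $v(t)^{h}=\bigl(\sum_{k\ge1}\tfrac{t^k}{k!}G^{(k)}(x)\bigr)^{h}$ by multilinearity of $\otimes$ into $\sum_{k_1,\dots,k_h\ge1}\tfrac{t^{k_1+\cdots+k_h}}{k_1!\cdots k_h!}\,G^{(k_1)}(x)\otimes\cdots\otimes G^{(k_h)}(x)$. Since $\nabla^{h}F$ is symmetric, the value of $(\nabla^{h}F)(G(x))\ast\bigl(G^{(k_1)}(x)\otimes\cdots\otimes G^{(k_h)}(x)\bigr)$ is invariant under permutations of $(k_1,\dots,k_h)$ --- this is precisely the place where the symmetry hypothesis is needed. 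I would then group the tuples $(k_1,\dots,k_h)$ by their multiplicity vector $j_\ell:=\#\{i:k_i=\ell\}$. Extracting the coefficient of $t^m$ forces $\sum_\ell \ell\,j_\ell=m$ together with $\sum_\ell j_\ell=h$; an elementary bound then gives $j_\ell=0$ for $\ell>m-h+1$ and shows that only $1\le h\le m$ contribute (because every $k_i\ge1$, so $m=\sum_i k_i\ge h$). The number of tuples realizing a given multiplicity vector is the multinomial coefficient $\tfrac{h!}{j_1!j_2!\cdots}$, and $k_1!\cdots k_h!=(1!)^{j_1}(2!)^{j_2}\cdots$.

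Assembling these facts and multiplying by $m!$, the $h$-th term of the coefficient of $t^m$ in $F(G(x+t))$ becomes, after the factor $h!$ cancels,
\begin{equation*}
 \bigl(\nabla^{h}F\bigr)(G(x))\ast\!\!\sum_{\substack{j_1+\cdots+j_{m-h+1}=h\\ j_1+2j_2+\cdots+(m-h+1)j_{m-h+1}=m}}\!\!\frac{m!}{j_1!\cdots j_{m-h+1}!}\Bigl(\tfrac{G'(x)}{1!}\Bigr)^{j_1}\!\otimes\cdots\otimes\Bigl(\tfrac{G^{(m-h+1)}(x)}{(m-h+1)!}\Bigr)^{j_{m-h+1}},
\end{equation*}
which is exactly $(\nabla^{h}F)(G(x))\ast\mathbf{B}_{m,h}\bigl(\tfrac{d}{dx}G(x),\dots,\tfrac{d^{m-h+1}}{dx^{m-h+1}}G(x)\bigr)$ by the definition \eqref{eq:Bell_polyn_appendix}; summing over $1\le h\le m$ yields \eqref{eq:Faa_di_Bruno_appendix}. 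The one step carrying real content is this regrouping of the tensorial expansion into the scalar Bell polynomials, and it is dispatched entirely by the symmetry of $\nabla^{h}F$. An alternative, fully self-contained route is induction on $m$ using the chain rule in the form $\frac{d}{dx}\bigl[(\nabla^{h}F)(G(x))\ast T(x)\bigr]=(\nabla^{h+1}F)(G(x))\ast\bigl(G'(x)\otimes T(x)\bigr)+(\nabla^{h}F)(G(x))\ast T'(x)$ together with the classical recursion satisfied by the Bell polynomials, but the combinatorial verification there is heavier, so I would present the generating-function proof.
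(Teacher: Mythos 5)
The paper states Proposition \ref{prop:multivariate_faa} without proof, citing Riordan and Johnson as references, so there is no in-text argument to compare against. Your generating-function proof is complete and correct: expanding $v(t)^h$ by multilinearity of $\otimes$, grouping tuples $(k_1,\dots,k_h)$ by their multiplicity vector $(j_\ell)$, and observing that the contraction against the symmetric tensor $\nabla^h F$ is permutation-invariant yields exactly the coefficient $\tfrac{m!}{j_1!\cdots j_{m-h+1}!}\,(1!)^{-j_1}\cdots$ of the Bell polynomial after the $\tfrac{1}{h!}$ from Taylor's theorem cancels the $h!$ in the multinomial count. You also correctly pin down the constraints $1\le h\le m$ and $j_\ell=0$ for $\ell>m-h+1$ from $k_i\ge1$, and you rightly flag that symmetry of $\nabla^h F$ (Schwarz) is what makes the contraction against the nominally non-commutative polynomial in $\otimes$ well defined in the sense of the paper's definition \eqref{eq:scalar_prod_pol}. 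This is the standard proof of the Bell-polynomial form of Fa\`a di Bruno's formula and it is sound as written.
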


%
%

\begin{footnotesize}
\bibliographystyle{chicago}
\bibliography{BibTeX-Final}
\end{footnotesize}

\end{document}